\definecolor{blue}{rgb}{0,0,0.9} 
\definecolor{red}{rgb}{0.9,0,0} 
\definecolor{green}{rgb}{0,0.9,0}
\newcommand{\myitem}[2][]{%
  \item[#1]%
  \protected@edef\@currentlabel{#1}
  \label{#2}%
}
\theoremstyle{plain}
\newtheorem{theo}{Theorem}[section]
\newtheorem{lem}[theo]{Lemma}
\newtheorem{coro}[theo]{Corollary}
\newtheorem{assump}[theo]{Assumption}
\newtheorem{prop}[theo]{Proposition}
\theoremstyle{definition}
\theoremstyle{remark}
\newtheorem{rem}[theo]{Remark}
\def\qed{\hfill \rule{4pt}{7pt}}
\def\pf{\noindent {\it Proof.\quad}}
\def\Tr{\mathrm{Tr}}
\def\<{\left\langle}
\def\>{\right\rangle}
\def\E{\mathcal{E}}
\def\M{\mathcal{M}}
\def\DD{{\rm Diag}}
\def\dd{{\rm d}}
\def\A{\mathcal{A}}
\def\B{\mathcal{B}}
\def\I{\mathcal{I}}
\def\R{\mathbb{R}}
\def\X{\mathcal{X}}
\def\Y{\mathcal{Y}}
\def\Z{\mathcal{Z}}
\def\G{\mathcal{G}}
\def\N{\mathbb{N}}
\def\S{\mathbb{S}}
\def\rr{{\rm rank}}
\def\P{\mathcal{P}}
\def\O{\mathcal{O}}
\def\({\left(}
\def\){\right)}
\def\C{\mathcal{C}}
\newcommand{\mubar}{\bar\mu}
\newcommand{\nubar}{\bar\nu}
\newcommand{\Lip}{\operatorname{Lip}}
\let\svthefootnote\thefootnote
\newcommand\blankfootnote[1]{%
	\let\thefootnote\relax\footnotetext{#1}%
	\let\thefootnote\svthefootnote%
}
\begin{document}
	\title{Convex relaxation approaches for high-dimensional optimal transport}
	\author{Yuehaw Khoo
	\thanks{Department of Statistics, University of Chicago, 
	({\tt ykhoo@uchicago.edu}). The research of this author is partially funded by NSF DMS-2339439, DOE DE-SC0022232, DARPA The Right Space HR0011-25-9-0031, and a Sloan research fellowship.}, \quad
	Tianyun Tang
\thanks{Department of Statistics, University of Chicago, ({\tt ttang@u.nus.edu}).
         }
	 }
	\date{\today} 
	\maketitle
	

\begin{abstract}
Optimal transport (OT) is a powerful tool in mathematics and data science but faces 
severe computational and statistical challenges in high dimensions. We propose convex 
relaxation approaches based on marginal and cluster moment relaxations that exploit 
locality in the distributions. These methods approximate high-dimensional couplings using 
low-order marginals and sparse moment statistics, yielding semidefinite programs that 
provide lower bounds on the OT cost with greatly reduced complexity. For Gaussian measures with sparse correlations, we prove an exponential convergence rate for the cluster moment relaxation and an improved statistical error bound. We also establish approximation error bounds for the marginal relaxation when the reference measures are local perturbations of mean-field measures. In addition, we demonstrate how to extract transport maps from our relaxations, offering a simpler and interpretable alternative to neural networks in generative modeling. Extensive numerical experiments demonstrate strong empirical performance across a range of distributions. Our results suggest that convex relaxations can provide a promising path for dimension reduction in high-dimensional OT.
\end{abstract}

\section{Introduction}

\subsection{Optimal transport}

In this paper, we consider the following optimal transport (OT) problem:  
\begin{equation}\label{OT}
\inf_\pi\left\{ \int_{\X\times \Y} c(x,y) {\rm d}\pi(x,y):\ \pi\in \Pi(\mu,\nu)\right\}, \tag{{\rm OT}}
\end{equation}
where $d \in \N^+$, $\mu$ and $\nu$ are probability measures on Borel sets $\X,\Y \subset \R^d$, $c:\X\times\Y \to \R^+$ is a lower semi-continuous cost function, and $\Pi(\mu,\nu)$ denotes the set of joint probability measures on $\X \times \Y$ with marginals $\mu$ and $\nu$ (the so-called transport plans).

Originating in the works of Monge and Kantorovich \cite{monge1781memoire,kantorovich1942translocation}, this problem has grown into a powerful mathematical framework with deep ties to analysis, geometry, partial differential equations, and optimization \cite{friesecke2024optimal,villani2021topics,villani2008optimal,cloninger2025linearized,calder2022improved,scarvelis2023riemannian}. Beyond pure mathematics, OT has become a central tool in applications across the sciences. In machine learning and data science, it plays a key role through the Wasserstein distance, which measures similarity between probability distributions while respecting their underlying geometry. This geometric viewpoint has enabled advances in generative modeling \cite{tolstikhin2017wasserstein,arjovsky2017wasserstein,genevay2018learning,balaji2020robust,cheng2024convergence}. Compared to divergences such as the Kullback–Leibler divergence \cite{kullback1951information}, OT-based methods often lead to more stable training and are better at handling distributions with disjoint supports \cite{peyre2019computational}.

\subsection{Curse of dimensionality}

Despite its promise, applying OT in high dimensions remains difficult. The computational cost grows quickly with sample size, and the sample size needed to estimate OT scales exponentially with dimension \cite{fournier2015rate,dudley1969speed,weed2019sharp}. This creates a major obstacle for large-scale machine learning and data science, where data often lie in very high-dimensional spaces. To mitigate this, several dimension-reduction strategies have been proposed.

One popular approach is to use \emph{neural network parameterizations} of transport maps or of the Kantorovich potentials (the dual variables in OT) \cite{makkuva2020optimal,korotin2022neural,mokrov2021large}. These methods are widely applied in generative modeling, but their training involves non-convex optimization and thus lacks strong theoretical guarantees.

Another strategy is the \emph{sliced Wasserstein} distance, which computes one-dimensional OT along random (or learned) projections and then averages the results \cite{rabin2011wasserstein,bonneel2015sliced,paty2019subspace,lin2020projection}. This substantially reduces computational cost, though the resulting distance is generally different from the true Wasserstein distance.

A further line of work \cite{weed2019estimation,vacher2021dimension,hutter2021minimax} leverages the smoothness of the distributions to obtain statistical rates for Wasserstein distance and transport map estimation that avoid exponential dependence on dimension. However, these improvements require strong regularity assumptions and often involve high computational complexity, limiting their use in large-scale applications.

\subsection{Our contributions}

In this paper, to address the high dimensionality of \eqref{OT}, we make the following contributions:

\begin{itemize}
\item We introduce convex relaxations of \eqref{OT} that approximate high-dimensional distributions using only sparse collections of low-order marginals or cluster moments. The resulting semidefinite programs provide computable lower bounds on the OT cost.

\item {We provide a theoretical analysis of the proposed relaxations under structured models. For Gaussian models with sparse precision structure, we prove that the cluster moment relaxation achieves an exponential approximation rate and gives an improved statistical error bound based on empirical means and sparse covariance entries. For local perturbations of mean-field product measures, we establish approximation error bounds for the marginal relaxation. In addition, we show that these relaxations lead to significant reductions in both computational and sample complexity.}

\item  Numerical experiments further suggest that the approach behaves robustly in several non-Gaussian settings, where we observe linear scaling in dimension for both error and running time, and constant scaling with respect to sample size.
\end{itemize}

Marginal relaxation has a long history in areas such as graphical models, density functional theory, and statistical physics \cite{wainwright2008graphical,an1988note,pelizzola2005cluster}, where it is commonly applied to complex many-body systems. In this work, we adapt the idea to optimal transport by applying it to the joint distribution $\pi(x,y)$, with both marginals $\mu$ and $\nu$ prescribed. Related work by Khoo et al.~\cite{khoo2019convex,khoo2020semidefinite} introduced marginal relaxation techniques for \emph{multi-marginal} OT, where the difficulty comes from coupling $N$ low-dimensional variables. In contrast, our setting involves only two variables, but each lies in a very high-dimensional space. While the sliced Wasserstein distance also relies on low-order marginals to define a tractable distance, our approach is designed to closely approximate the original Wasserstein distance.

In \cite{mula2024moment}, Mula and Nouy proposed sum-of-squares (SOS) moment relaxations \cite{lasserre2001global,lasserre2008semidefinite} of \eqref{OT}, which approximate high-dimensional distributions through low-order statistics. Their method avoids spatial discretization, making it particularly suitable for \eqref{OT} between continuous distributions. Our cluster moment relaxation is inspired by their approach. However, we realize that their method {does} not scale well in high-dimensional setting, because one has to solve a semidefinite program (SDP) of size $\binom{2d+n}{n}$, where $n$ is the relaxation degree—a combinatorial growth that quickly becomes memory-prohibitive for large $d$ and $n$. In contrast, our cluster moment relaxation leverages the local structure of \eqref{OT} and requires only a sparse collection of moments, resulting in much smaller SDP blocks and computations that remain tractable even in high dimensions.

Another related work is by Vacher and Bach \cite{vacher2021dimension}, who use sum-of-squares representations of kernel functions in the dual (Kantorovich) formulation of \eqref{OT} to model smooth nonnegative functions. Their dimension-free guarantees are proved under smoothness assumptions on the underlying distributions. Our theoretical results below also impose model-specific regularity assumptions in the regimes where quantitative rates are proved; see, for instance, the local perturbation result in Theorem~\ref{thm:mfloc}. The main difference is structural: our cluster moment relaxation is designed to exploit locality, so that only low-dimensional marginal information and sparse collections of moments are used. This perspective is reflected in the local approximation result of Theorem~\ref{thm:mfloc} and Remark~\ref{rem:mf-relative}, and in the statistical bounds of Proposition~\ref{prop:samplecomplex} and Theorem~\ref{thm:sdpvalue}.

\subsection{Organization}

The rest of the paper is organized as follows. Section~\ref{Sec:cvxrl} introduces our convex relaxation approaches for \eqref{OT}. {Section~\ref{Sec:analysis} provides theoretical analysis of the approximation error bounds as well as statistical error of our convex relaxation approaches.} In Section~\ref{Sec:extmap}, we describe how to extract transport maps from the relaxation. {In Section~\ref{Sec:prepro}, we discuss preprocessing methods for our convex relaxation approaches.} Section~\ref{Sec:numer} presents numerical experiments that illustrate the effectiveness of our methods. Section~\ref{Sec:conc} concludes with a brief summary and discussion. {Useful technical lemmas and detailed proofs are given in Appendix~\ref{sec:useful-lemmas} and Appendix~\ref{sec:proof}.}

\section{Convex relaxations}\label{Sec:cvxrl}

In this section, we propose a convex relaxation framework for \eqref{OT}. 
To formally introduce the relaxation approaches, we first present the relevant notations, 
definitions, and assumptions in the following subsection. 
For intuition, readers may also refer to Figures~\ref{fig:MF}–\ref{fig:HG}.

\subsection{Preliminaries}

In what follows, we introduce the basic notation and structures that support our convex relaxation framework. Definition~\ref{itm:first} sets up index partitions and the associated product spaces. Definition~\ref{itm:second} introduces moments of probability measures. Definition~\ref{itm:third} uses graphs to encode sparsity patterns and correlation structure. Finally, Definition~\ref{itm:fourth} defines projection operators for extracting marginals and moments.

\begin{itemize}
\myitem[D1]{itm:first}(indices, partitions, marginals)
For any integer $n\in \N^+,$ define sets $[n]:=\{1,2,\ldots,n\}$ and 
\begin{equation}
[n]_2:=\left\{ij:\ i,j\in \N^+,\  1\leq i<j\leq n\right\},
\end{equation} 
{where we use $ij$ to denote the ordered pair $(i,j).$}
Fix $K\in \N^+.$ Partition {the coordinates of} $x$ and $y$ {in $\R^d$} into $K$ clusters $(x_1;x_2;\dots;x_K)$ and $(y_1;y_2;\dots;y_K),$ where each cluster may contain multiple variables.  
We {require} $x_k$ and $y_k$ to share the same coordinates in $x$ and $y$.  
For each $k\in [K],$ let $\X_k$ and $\Y_k$ denote the Borel subsets of $\X$ and $\Y$ corresponding to clusters $x_k$ and $y_k,$ respectively{; equivalently, they are the coordinate subspaces determined by the corresponding coordinate hyperplanes}. Then
\begin{equation}\label{XYdec}
\X=\X_{1}\times \X_{2}\times \cdots\times \X_{K},\qquad 
\Y=\Y_{1}\times \Y_{2}\times \cdots\times \Y_{K}.
\end{equation}
For simplicity, we write
\begin{equation}\label{decvar}
z=(x,y),\quad\mathcal{Z}=\mathcal{X}\times\mathcal{Y},\quad z_k=(x_k,y_k),\quad \Z_k=\X_k\times \Y_k
\end{equation}
The marginals of $\mu$ and $\nu$ on $\mathcal{X}_k$ and $\mathcal{Y}_k$ are denoted by $\mu_k$ and $\nu_k$.
The marginals of $\mu$ and $\nu$ on $\mathcal{X}_{i}\times \mathcal{X}_{j}$ and $\mathcal{Y}_{i}\times \mathcal{Y}_{j}$ are denoted by $\mu_{ij}$ and $\nu_{ij}.$

\myitem[D2]{itm:second}(measures, moments, densities)

Let $\mathcal{Z}$ be a Borel set in a Euclidean space. Denote by $\M(\mathcal{Z})$ the space of signed Borel measures on $\mathcal{Z}$ satisfying $\eta(\mathcal{Z})=1$ for any $\eta\in \M(\Z)$, and by $\P(\mathcal{Z})$ the subset of probability measures on $\mathcal{Z}$, i.e.,
\begin{equation}
\P(\mathcal{Z}) := \{\, \eta \in \M(\mathcal{Z}) : \eta \geq 0 \,\}.
\end{equation}
For $\eta \in \P(\mathcal{Z})$ and a measurable function $\Xi:\mathcal{Z}\to\mathbb{R}^{m\times n}$, we define the corresponding moment as
\begin{equation}\label{defiint}
\eta(\Xi) := \int_{\mathcal{Z}} \Xi(z)\,{\rm d}\eta(z).
\end{equation}
When $\Xi$ is vector- or matrix-valued, the integral in \eqref{defiint} is understood to be taken elementwise.

\myitem[D3]{itm:third}(graphs)

For a graph $G$, let $V(G)$ and $E(G)$ denote its vertex and edge sets, respectively. 
Throughout this paper, all graphs are assumed to be undirected and self-loop-free. {We also use integers to denote the vertices of a graph.} 
Whenever we write $ij \in E(G)$, we implicitly assume that $i < j$. 

We introduce a reference graph $\mathcal{G}$ with vertex set $V(\mathcal{G}) = [K]$, 
which serves as the underlying structure for the convex relaxation methods developed in this work. 
Without loss of generality, we further assume that $\mathcal{G}$ is connected, since otherwise our models can be decomposed into independent subproblems, each associated with a connected underlying structure.

Define $\S^d$ to be the set of $d$ by $d$ symmetric real matrices. {Define $\S^d_+$ and $\S^d_{++}$ to be the set of $d$ by $d$ symmetric positive semidefinite and positive definite matrices respectively. For any graph $G=([d],\E)$, define the sets}
\begin{equation}\label{defspset}
{\S_G} := \{X \in \S^d : X_{ij}=0 \ \text{for all } i \neq j \text{ with } ij \notin \E \},
\end{equation}
\begin{equation}\label{defspsetc}
{\S_G^\perp} := \{X \in \S^d : X_{ij}=0 \ \text{whenever } i=j \text{ or } ij \in \E \}.
\end{equation}
{Here $\S_G^\perp$ is the orthogonal complement of $\S_G$ in $\S^d$ with respect to the Frobenius inner product.}
See the following example:
\begin{equation}\label{illG}
G:\tikz[baseline=-0.5ex]{
  \node[circle,fill,inner sep=2pt] (a) {};
  \node[circle,fill,inner sep=2pt,right=0.5cm of a] (b) {};
  \node[circle,fill,inner sep=2pt,right=0.5cm of b] (c) {};
  \draw (a)--(b)--(c);
} \qquad {\S_G}:=\left\{\begin{bmatrix}
a&b&0\\
b&c&d\\
0&d&e
\end{bmatrix}\right\}, \quad {\S_G^\perp}:=\left\{\begin{bmatrix}
0&0&f\\
0&0&0\\
f&0&0
\end{bmatrix}\right\}
\end{equation}

We say that $G$ is a \emph{sparsity pattern} of $A$ if $A \in {\S_G}$. 
For any $A \in \S^d$, we write 
{$[A]_G := {\rm Proj}_{\S_G}(A)$ and 
$[A]_G^\perp := {\rm Proj}_{\S_G^\perp}(A)$, where the projections are taken with respect to the Frobenius inner product}.

For any $h\in \N,$ we define graph $G^h$ connecting nodes within graph distance $h$ in $G,$ that is
\begin{equation}\label{defipower}
\forall ij\in [d]_2,\ ij\in E(G^h),\ {\rm if \ and\ only\ if}\ {\rm dist}_G(i,j)\leq h.
\end{equation} 
We will use it to encode {\bf connectivity radius}. See the following example:
\begin{equation}\label{illpower}
G:\tikz[baseline=-0.5ex]{
  \node[circle,fill,inner sep=2pt] (a) {};
  \node[circle,fill,inner sep=2pt,right=0.5cm of a] (b) {};
  \node[circle,fill,inner sep=2pt,right=0.5cm of b] (c) {};
  \node[circle,fill,inner sep=2pt,right=0.5cm of c] (d) {};
  \node[circle,fill,inner sep=2pt,right=0.5cm of d] (e) {};
  \draw (a)--(b)--(c)--(d)--(e);
} \qquad 
G^2:\tikz[baseline=-0.5ex]{
  \node[circle,fill,inner sep=2pt] (a) {};
  \node[circle,fill,inner sep=2pt,right=0.5cm of a] (b) {};
  \node[circle,fill,inner sep=2pt,right=0.5cm of b] (c) {};
  \node[circle,fill,inner sep=2pt,right=0.5cm of c] (d) {};
  \node[circle,fill,inner sep=2pt,right=0.5cm of d] (e) {};
  \draw (a)--(b)--(c)--(d)--(e);
  \draw (a)to[out=45,in=135](c);
  \draw (c)to[out=45,in=135](e);
  \draw (b)to[out=-45,in=-135](d);
}
\end{equation}

\myitem[D4]{itm:fourth}(projection operators)

{Throughout this paper, we use two projection operators ${\rm P}$ and ${\rm R}.$ Let $\eta$ be a probability measure on variables indexed by a set $S,$ and let $U$ be another set of variables. We define ${\rm P}_U(\eta)$ to be the marginal of $\eta$ on the common variables $S\cap U,$ or equivalently the pushforward of $\eta$ under the coordinate projection onto $S\cap U.$ In particular, if $U\subset S,$ then ${\rm P}_U(\eta)$ is the usual marginal on $U.$ For example, if $(u_2,u_3)\sim \eta,$ then ${\rm P}_{(u_1,u_2)}(\eta)$ is the marginal of $\eta$ on $u_2.$}

{Next let $\Phi$ be a finite vector or matrix whose entries are functions of several variables. We define ${\rm R}_u(\Phi)$ to be the vector obtained by keeping exactly those entries of $\Phi$ that depend only on the variables in $u,$ with the inherited ordering. Equivalently, an entry $\phi$ is kept if there exists a function $\tilde\phi$ such that $\phi(z)=\tilde\phi(u).$ For polynomial bases, this means keeping the monomials whose variable support is contained in $u.$ For example,}
\begin{equation}\label{exP}
{
{\rm R}_x\!\left(\begin{bmatrix}xy & 1 \\ x^2 & y\end{bmatrix}\right)
=\begin{bmatrix}1 \\ x^2\end{bmatrix}, \qquad
{\rm R}_y\!\left(\begin{bmatrix}xy & 1 \\ x^2 & y\end{bmatrix}\right)
= \begin{bmatrix}1 \\ y\end{bmatrix}.
}
\end{equation}
\end{itemize}

\subsection{Marginal relaxation}

In this subsection, we present the marginal relaxation of \eqref{OT}, which approximates the high-dimensional transport plan in two steps, illustrated in Figures~\ref{fig:MF}–\ref{fig:HG}. Step 1 groups strongly correlated variables and constructs local couplings within each group. Step 2 introduces pairwise couplings across groups.  {Figure~\ref{fig:MF} depicts the first, mean-field step: instead of working with the full coupling of $(z_1,\ldots,z_K),$ we keep only the within-cluster couplings $\pi_k$ between $x_k$ and $y_k.$ Figure~\ref{fig:HG} depicts the second step, where pairwise couplings $\pi_{ij}$ are added along the reference graph to recover selected dependencies between clusters.}

\begin{figure}
\centering
\resizebox{0.75\linewidth}{!}{%
\begin{tikzpicture}[line cap=round,line join=round,
  dot/.style={circle,fill,inner sep=2.6pt},
  box/.style={draw,dashed,very thick}]

\newcommand{\panelCompact}[4]{%
  \draw[box] (-2.8,1.8) rectangle (2.0,-1.8);
  \node[dot,label=above:$#2$] (a) at (-0.5,1.0) {};
  \node[dot,label=below:$#3$] (b) at (-0.5,-0.5) {};
  \draw[thick] (a)--(b);
  \path (a) -- (b) node[midway,yshift=-1.6cm] {$#4$};
}

\begin{scope}[shift={(0,0)}]
  \panelCompact{}{x_1}{y_1}{z_1=(x_1,y_1)\sim\pi_1}
\end{scope}

\begin{scope}[shift={(6,0)}]
  \panelCompact{}{x_2}{y_2}{z_2=(x_2,y_2)\sim\pi_2}
\end{scope}

\begin{scope}[shift={(12,0)}]
  \panelCompact{}{x_3}{y_3}{z_3=(x_3,y_3)\sim\pi_3}
\end{scope}

\end{tikzpicture}
}
\caption{\small Step 1. Partition $x$ and $y$ into $K$ clusters $(x_1;x_2;\cdots;x_K)$ and $(y_1;y_2;\cdots;y_K)$. Construct a local coupling within each cluster: $z_k=(x_k,y_k)\sim\pi_k$. The mean-field approximation $\otimes_{k=1}^K \pi_k$ of $\pi$ is exact if the $z_k$'s are independent.}
\label{fig:MF}
\end{figure}

\begin{figure}
\centering
\resizebox{0.7\linewidth}{!}{%
\begin{tikzpicture}
[line cap=round,line join=round,
  dot/.style={circle,fill,inner sep=2.6pt},
  box/.style={draw,dashed,very thick}
]

\newcommand{\panel}[8]{%
  \draw[box] (-2,1.5) rectangle (2,-1.5);
  \node[dot,label=above:$#4$] (c) at (0,0.8) {};
  \node[dot,label=below:$#5$] (d) at ( 0,-0.4) {};
  \draw[thick] (c)--(d) node[midway,below] {$#8$};
  \path (c) -- (d) node[midway,yshift=-1.4cm] {$#6$};
}

\begin{scope}[shift={(0,5)}]
  \panel{}{}{}{x_1}{y_1}{z_1 \sim \pi_{1}}{}{}
\end{scope}

\begin{scope}[shift={(-5,0)}]
  \panel{}{}{}{x_2}{y_2}{z_2 \sim \pi_{2}}{}{}
\end{scope}

\begin{scope}[shift={(5,0)}]
  \panel{}{}{}{x_3}{y_3}{z_3 \sim \pi_{3}}{}{}
\end{scope}

\draw[dashed,thick] (-2,5) -- (-5,1.5) node[midway,sloped,above] {$(z_1,z_2) \sim \pi_{12}$};
\draw[dashed,thick] ( 2,5) -- ( 5,1.5) node[midway,sloped,above] {$(z_1,z_3) \sim \pi_{13}$};
\draw[dashed,thick] (-3,0) -- ( 3,0) node[midway,above] {$(z_2,z_3) \sim \pi_{23}$};
\end{tikzpicture}
}
\caption{\small Step 2. Add pairwise couplings between correlated clusters: $(z_i,z_j)\sim \pi_{ij},$ consistent with marginals ${\rm P}_{z_i}(\pi_{ij})=\pi_i,{\rm P}_{z_j}(\pi_{ij})=\pi_j$ and satisfying the PSD constraint \eqref{PSDcons}. In this example, the reference graph $\G$ (Definition~\ref{itm:third}) is a triangle. }
\label{fig:HG}
\end{figure}

The problem \eqref{OT} can be equivalently written as follows
\begin{align}\label{OT1}
&\inf_{\pi}\ \pi(c) \tag{OT} \\
{\rm s.t.}&\ {\rm P}_x(\pi)=\mu,\ {\rm P}_y(\pi)=\nu \tag{1a} \label{consrmarg} \\
&\ \pi\in \P(\Z). \tag{1b} \label{consrPg}
\end{align}
We will relax the above conditions \eqref{consrmarg} and \eqref{consrPg} as some conditions on the marginals of $\pi.$ 

\begin{itemize}
\item {\bf OT marginal constraints:} We relax the marginal constraint \eqref{consrmarg} as the following conditions on $\pi_{ij}$
\begin{equation}\label{marmar}
{\rm P}_x(\pi_{ij})=\mu_{ij},\qquad {\rm P}_y(\pi_{ij})=\nu_{ij}.
\end{equation}
We will later restrict the marginal constraints in a reference graph $\G$ defined in Definition~\ref{itm:third}.

\item {\bf Local positivity:} We relax the positivity of $\pi$ as the positivity of its marginals: 
\begin{equation}\label{marcons11}
\pi_{ij}\in \P(\Z_i\times \Z_j).
\end{equation}
Note that the positivity of $\pi_i$ and $\pi_j$ will be {implicitly} implied by \eqref{marcons11} and the consistency condition discussed later.

\item {\bf Global positivity:} We relax the nonnegativity constraint \eqref{consrPg} as the positive semidefiniteness (PSD) condition on the marginals $(\pi_k,\pi_{ij})_{[K],[K]_2}$. This means that for any family of square-integrable functions $(f_k)_{[K]} \in (L^2(\pi_k))_{[K]}$ the following inequality holds
\begin{equation}\label{PSDcons}
\sum_{k\in [K]}\pi_k\(f_k^2\)+
\sum_{ij\in [K]_2}2\pi_{ij}(f_if_j)= \pi\Big( \big( \sum_{k\in [K]} f_k(z_k) \big)^2 \Big)\geq 0,
\end{equation}
which comes from the linearity of integration as well as the fact that $(\pi_k,\pi_{ij})_{[K],[K]_2}$ are 1 and 2 marginals of the probability measure $\pi.$ We use ``$(\pi_k,\pi_{ij})_{[K],[K]_2} \succeq 0$" to denote the condition \eqref{PSDcons}. When the sets $\mathcal{Z}_k$ are finite, condition \eqref{PSDcons} is equivalent to requiring the block matrix
\begin{equation}\label{matPSDm}
\begin{bmatrix}
\DD(\pi_1)      & \pi_{12}   & \cdots & \pi_{1K} \\
\pi_{12}^\top & \DD(\pi_2)  & \ddots & \pi_{2K} \\
\vdots   & \ddots  & \ddots & \vdots \\
\pi_{1K}^\top & \pi_{2K}^\top & \cdots & \DD(\pi_K)
\end{bmatrix} \succeq 0,
\end{equation}
where $\DD(\pi_k)$ denotes a diagonal matrix with diagonal entries being $\pi_k$. The PSD condition is often used to strengthen marginal relaxations, with applications in density functional theory 
\cite{chen2025convex,chen2024multiscale,khoo2019convex,peng2012approximate}. 

\item {\bf Consistency:} We relax the condition that $\pi_k,\pi_{ij}$ are one- and two-marginals of $\pi$ into some local consistency conditions. For every $ij \in [K]_2$, the marginals of $\pi_{ij}$ agree with those of $\pi_i$ and $\pi_j.$ The conditions are summarized as follows:
\begin{equation}\label{marcons}
{\rm P}_{z_i}(\pi_{ij}) = \pi_i, 
\qquad 
{\rm P}_{z_j}(\pi_{ij}) = \pi_j,
\end{equation}
where ${\rm P}_{z_i},{\rm P}_{z_j}$ are defined in Definition~\ref{itm:fourth}.

\end{itemize}
With the above conditions on the one- and two-marginals $\pi_k,\pi_{ij}$ of $\pi$ in \eqref{OT1}, we are now able to present the marginal relaxation of \eqref{OT1}.  Suppose $c(z)$ has the following decomposition
\begin{equation}\label{cdec}
c(z)=\sum_{k\in [K]} c_k(z_k).
\end{equation}
{This decomposition holds for the standard squared Euclidean cost $c(x,y)=\|x-y\|^2,$ by taking $c_k(z_k)=\|x_k-y_k\|^2.$ More generally, it holds for additive coordinate costs $c(x,y)=\sum_i \ell_i(x_i,y_i),$ including $\ell_p$ losses $c(x,y)=\sum_i |x_i-y_i|^p.$}
The marginal relaxation of \eqref{OT1} is as follows:
\begin{align}\label{OTmr}
\inf_{(\pi_k,\pi_{ij})_{[K],[K]_2}} &\ \sum_{k\in [K]} \pi_k(c_k) 
\tag{${\rm OT}_{\rm mar}$} \\
{\rm s.t.}\quad & {\rm P}_x(\pi_{ij})=\mu_{ij},\quad {\rm P}_y(\pi_{ij})=\nu_{ij},\quad \forall ij \in E(\G) \label{cons1} \tag{2a} \\
& {\rm P}_{z_i}(\pi_{ij}) = \pi_i,\quad  
   {\rm P}_{z_j}(\pi_{ij}) = \pi_j, \quad \forall ij \in [K]_2 \label{cons2} \tag{2.1b} \\
& \pi_{ij}\in \P(\Z_i\times \Z_j),\quad \forall ij\in [K]_2 \label{cons3} \tag{2.2b} \\
& (\pi_k,\pi_{ij})_{[K],[K]_2} \succeq 0, \label{cons4} \tag{2.3b}
\end{align}
where the constraint \eqref{cons1} is a relaxation of \eqref{consrmarg} and \eqref{cons2}--\eqref{cons4} are relaxations of \eqref{consrPg}. Note that the condition $\pi_k\in \P(\Z_k)$ is implied by \eqref{cons2} so we omit it to avoid redundancy. The reference graph $\G,$ first mentioned in Definition~\ref{itm:third} (it is a triangle in Figure~\ref{fig:HG}), controls which pairwise marginals are retained. Although the complete graph ($E(\G)=[K]_2$) gives the tightest relaxation, sparser graphs can reduce computational complexity for solving the SDP problem, which will be elaborated later.

Compared to \eqref{OT}, whose decision variable is a full $d$-dimensional measure, this formulation works only with low-dimensional marginals, drastically reducing complexity. For example, when $K=d$ and $\X_k=\Y_k=[r]$, \eqref{OT} involves $r^{2d}$ variables, while \eqref{OTmr} involves only $r^4 d(d-1)/2+r^2d$.

From \eqref{cons3} and \eqref{cons4}, the matrix variable in \eqref{matPSDm} (in the finite setting) is both PSD and nonnegative. In this case, the problem \eqref{OTmr} is a doubly-nonnegative (DNN) programming problem. Although it is computationally tractable with several solvers \cite{hou2025low,sun2020sdpnal+,hou2025rinnal+} available, high-dimensional DNN problems are still challenging due to its large number of variables and constraints. To reduce the dimensionality of \eqref{OTmr}, we further relax its constraints in the following two ways:

The first way is to drop the PSD condition \eqref{cons4} of \eqref{OTmr}, obtaining the following problem:
\begin{align}\label{OTmr-1}
\inf_{(\pi_k,\pi_{ij})_{[K],E(\G)}} &\ \sum_{k\in [K]} \pi_k(c_k) 
\tag{${\rm OT}_{\rm mar}^1$} \\
{\rm s.t.}\quad & {\rm P}_x(\pi_{ij})=\mu_{ij},\quad {\rm P}_y(\pi_{ij})=\nu_{ij},\quad \forall ij \in E(\G) \label{cons5} \tag{3a} \\
& {\rm P}_{z_i}(\pi_{ij}) = \pi_i,\quad  
   {\rm P}_{z_j}(\pi_{ij}) = \pi_j,\quad \forall ij \in E(\G) \label{cons6} \tag{3.1b} \\
   & \pi_{ij}\in \P(\Z_i\times \Z_j),\quad \forall ij\in E(\G). \label{cons7} \tag{3.2b} 
\end{align}
In \eqref{OTmr-1}, we only include $\pi_{ij}$ on the edge set of $E(\G).$ This is because for any $ij\in [K]_2\setminus E(\G),$ we may define $\pi_{ij}$ to be the product measure $\pi_i\otimes \pi_j$ and it is easy to check that $(\pi_k,\pi_{ij})_{[K],[K]_2}$ satisfy all the constraints in \eqref{OTmr} except the PSD constraint \eqref{cons4}. Therefore, by dropping the PSD, we also dropped a large number of unrelated decision variables if $\G$ is sparse.  In addition, the problem \eqref{OTmr-1} is a linear programming (LP) problem, which is much easier than SDP. 

The second way is to remove the nonnegativity constraint \eqref{cons3} from \eqref{OTmr}, obtaining the following problem:
\begin{align}\label{OTmr-2}
\inf_{(\pi_k,\pi_{ij})_{[K],[K]_2}} &\ \sum_{k\in [K]} \pi_k(c_k) 
\tag{${\rm OT}_{\rm mar}^2$} \\
{\rm s.t.}\quad & {\rm P}_x(\pi_{ij})=\mu_{ij},\quad {\rm P}_y(\pi_{ij})=\nu_{ij},\quad \forall ij \in E(\G) \label{cons8} \tag{4a} \\
& {\rm P}_{z_i}(\pi_{ij}) = \pi_i,\quad  
   {\rm P}_{z_j}(\pi_{ij}) = \pi_j,\quad \forall ij \in [K]_2 \label{cons9} \tag{4.1b} \\
      & \pi_{ij}\in \M(\Z_i\times \Z_j),\quad \forall ij\in [K]_2 \label{cons10} \tag{4.2b}  \\
   & (\pi_k,\pi_{ij})_{[K],[K]_2} \succeq 0. \label{cons11} \tag{4.3b} 
\end{align}

\begin{rem}\label{remmr}
The problem \eqref{OTmr-2} is an SDP without nonnegativity constraints. Its dimension can be further reduced via the \emph{chordal conversion} method in \cite{tang2024exploring}.  
For example, consider the setting $K=d$ and $\X_k=\Y_k=[r].$ Then $\pi_k\in \R^{r^2}$ and $\pi_{ij}\in \R^{r^2\times r^2}.$ The constraint \eqref{cons9} becomes
\begin{equation}\label{disOTcons}
\pi_{ij}{\bf 1}_{r^2}=\pi_i,\qquad \pi_{ij}^\top{\bf 1}_{r^2}=\pi_j.
\end{equation}
These conditions can be compactly written as
\begin{equation}\label{affconsf}
X(I_d\otimes {\bf 1}_{r^2}-J_d\otimes {\bf 1}_{r^2})=0,
\end{equation}
where $X$ is the matrix in \eqref{matPSDm}, $I_d$ is the identity matrix, and $J_d$ is the cyclic shift matrix with $J_{2,1}=J_{3,2}=\cdots=J_{d,d-1}=J_{1,d}=1$ and all other entries zero.  
Since $X\succeq 0,$ \eqref{affconsf} is equivalent to
\begin{equation}\label{affconsf1}
\<X,HH^\top\>=0,
\end{equation}
with $H=(I_d\otimes {\bf 1}_{r^2}-J_d\otimes {\bf 1}_{r^2}).$ Thus, \eqref{affconsf} reduces to a single affine constraint on $X.$ Noting that $\rr(HH^\top)=\rr(H)=d-1,$ this constraint involves a low-rank coefficient matrix.  

The remaining linear constraints in \eqref{OTmr-2} and the linear objective involve only the diagonal blocks $\DD(\pi_k)$ or the off-diagonal blocks $\pi_{ij},\pi_{ij}^\top$ for $ij\in E(\G).$ This yields a \emph{sparse plus low-rank} structure \cite[Definition~1.1]{tang2024exploring}, which enables significant dimensionality reduction. For instance, if $\G$ is a tree, \cite[Theorem~1.4]{tang2024exploring} shows that \eqref{OTmr-2} decomposes into a multi-block SDP with block size at most $r^2+2d,$ far smaller than the ambient dimension $dr^2.$ For general $\G,$ the block size is related to the \emph{tree-width} of $\G$. Please see \cite[Section 2]{tang2024exploring} for more details about tree-width.
\end{rem}

While the marginal relaxation already reduces dimensionality significantly, it still works directly with measures, which becomes costly in continuous domains. To address this, we next introduce the cluster moment relaxation, which works with moments instead of full measures.

\subsection{Cluster moment relaxation}\label{Sec:mom}

In this subsection, we introduce the \emph{cluster moment relaxation}, 
which applies moment relaxation to the marginal relaxations \eqref{OTmr}, \eqref{OTmr-1} and \eqref{OTmr-2} of \eqref{OT}. 
Following the methodology of \cite{chen2025convex}, we begin by defining the cluster basis. 
{The idea is to keep the same local structure as the marginal relaxation, but to replace each local measure variable by finitely many moments. The choice of which moments to keep is determined by the cluster basis defined below.}

Fix a relaxation degree $n \in \N^+,$ specifying the maximum polynomial degree retained.  
Let $\{\phi_j : \R \to \R\}_{j=0}^n$ be basis functions with $\phi_0 \equiv 1$ (e.g., $\phi_j(s)=s^j$).  
For any multi-index $\alpha=(\alpha_1,\ldots,\alpha_{2K}) \in \N^{2d},$ define
\begin{equation}\label{defPhi}
\phi_{\alpha} := \prod_{k \in [K]} \phi_{\alpha_k}(x_k)\,\phi_{\alpha_{k+K}}(y_k).
\end{equation}
If $x_k$ (or $y_k$) contains multiple coordinates, then $\alpha_k$ is itself a multi-index, and  
$\phi_{\alpha_k}(x_k)$ denotes the product $\prod_i \phi_{\alpha_k[i]}(x_k[i]),$  
where $\alpha_k[i]$ and $x_k[i]$ are the respective coordinates. We define the \emph{cluster basis} for $z_k,$ as:
\begin{equation}
\Phi_k := \left\{ \phi_{\alpha} : 
  \  \phi_{\alpha}\ {\rm only\ has\ variables\ in\  }z_k,\ 
  |\alpha| \leq n \right\}, \label{defFk}\\
\end{equation}
which can be viewed as vectors of basis functions supported on the variables $z_k,$ up to degree $n$. If $c_k$ lies in the span of $\Phi_k \Phi_k^\top$, it can be written as 
\begin{equation}\label{eqck}
c_k(z_k) = \langle C_k, \Phi_k \Phi_k^\top \rangle
\end{equation}
for some symmetric matrix $C_k$. 
{For the monomial basis $\phi_j(s)=s^j,$ this condition is satisfied by any polynomial local cost $c_k$ of degree at most $2n$ in the variables $z_k.$ This includes the usual squared Euclidean cost  and $\ell_p$ losses with even $p\leq 2n.$ }
The objective of the relaxation then becomes
\begin{equation}\label{objmomk}
\sum_{k \in [K]} \langle C_k, M_k \rangle,\quad {\rm where}\quad M_k := \pi_k(\Phi_k \Phi_k^\top), \quad M_{ij} := \pi_{ij}(\Phi_i \Phi_j^\top).
\end{equation}
Here $M_k$ is a Gram-type moment matrix and therefore always symmetric 
positive semidefinite, while $M_{ij}$ collects cross-moments between cluster bases 
and need not be symmetric or positive semidefinite.
{Thus $M_k$ and $M_{ij}$ are finite statistics of the unknown couplings $\pi_k$ and $\pi_{ij}$ instead of the couplings themselves. Increasing the degree $n$ enriches the relaxation; for small $n,$ the program is cheaper but only enforces the OT constraints through low-order statistics.}

We next describe the constraints that these moment matrices must satisfy. They mirror the marginal/local and global positivity/consistency constraints from \eqref{OTmr}. {The prescribed moment constraints encode the source and target marginals, the local and global positivity constraints encodes the nonnegativity of local and global coupling measures at the level of the chosen basis.}

\begin{itemize}

\item {\bf OT marginal constraints:}
We relax the marginal constraints in \eqref{marmar} using moments of the $x$ and $y$ marginal. In detail, let 
\begin{equation}\label{objmomkn}
M_k^x := \pi_k\({\rm R}_x\(\Phi_k \Phi_k^\top\)\), \qquad M_k^y := \pi_k\({\rm R}_y\(\Phi_k \Phi_k^\top\)\).
\end{equation}
\begin{equation}\label{objmomijn}
M_{ij}^x := \pi_{ij}\({\rm R}_x\(\Phi_i \Phi_j^\top\)\), \qquad M_{ij}^y := \pi_{ij}\({\rm R}_y\(\Phi_i \Phi_j^\top\)\).
\end{equation}
We have that
\begin{equation}\label{consmomk}
M_k^x =  \mu\({\rm R}_x\(\Phi_{k}\Phi_{k}^\top\)\),\qquad M_k^y =  \nu\({\rm R}_y\(\Phi_{k}\Phi_{k}^\top\)\),
\end{equation}
\begin{equation}\label{consmomij}
M_{ij}^x =  \mu\({\rm R}_x\(\Phi_{i}\Phi_{j}^\top\)\),\qquad M_{ij}^y =  \nu\({\rm R}_y\(\Phi_{i}\Phi_{j}^\top\)\),
\end{equation}
which means that some entries of the moment matrices $M_k,M_{ij}$ are prescribed as the moments of $\mu$ and $\nu.$

\item {\bf Local positivity:}  The local positivity condition \eqref{marcons11} in marginal relaxation is relaxed as the following condition on the moment matrices
\begin{equation}\label{locpos}
(M_i,M_j,M_{ij}) \in \C_{ij},
\end{equation}
where $\C_{ij}$ is a convex set encoding necessary conditions for $(M_i,M_j,M_{ij})$ to represent the moments of some probability measure $\pi_{ij}$. The conditions can be summarized as that: $(M_i ,M_j, M_{ij})$ is embedded in a PSD matrix defined by sum-of-squares (SOS) hierarchy $\pi_{ij}({\rm SOS\ for}\ (z_i,z_j))\geq0.$ In the marginal, this is enforced by point-wise nonnegativity.

\item {\bf Global positivity:} 
We relax the global positivity condition \eqref{PSDcons} in \eqref{OTmr} by restricting the test functions $f_k$ to the form $f_k = v_k^\top \Phi_k$ for some vector $v_k$. Substituting into \eqref{PSDcons} gives
\begin{align}\label{gPSDp}
0 &\leq \sum_{k \in [K]} \pi_k\!\left((v_k^\top \Phi_k)^2\right) 
   + \sum_{ij \in [K]_2} 2 \pi_{ij}\!\left((v_i^\top \Phi_i)(v_j^\top \Phi_j)\right) \notag \\
  &= \sum_{k \in [K]} v_k^\top M_k v_k 
   + \sum_{ij \in [K]_2} 2 v_i^\top M_{ij} v_j.
\end{align}
Since this inequality must hold for all choices of $(v_k)_{k \in [K]}$, it is equivalent to requiring the block moment matrix
\begin{equation}\label{matPSD}
M :=
\begin{bmatrix}
M_1      & M_{12}   & \cdots & M_{1K} \\
M_{12}^\top & M_2  & \ddots & M_{2K} \\
\vdots   & \ddots  & \ddots & \vdots \\
M_{1K}^\top & M_{2K}^\top & \cdots & M_K
\end{bmatrix} \succeq 0.
\end{equation}

\item {\bf Consistency:} {Let $\Phi:=[\Phi_1;\Phi_2;\ldots;\Phi_K].$ Then the matrix $M$ represents the moment of the Gram matrix $\Phi\Phi^\top.$} If $\Phi \Phi^\top$ has repeated monomials at different positions, the corresponding entries of $M$ must coincide. We say ``$M$ is consistent”. 

\end{itemize}

With these conditions, the \emph{cluster moment relaxation} of \eqref{OT} can be formulated as
\begin{align}\label{OTmom}
\inf_{M}\quad &\sum_{k \in [K]} \langle C_k, M_k \rangle 
\tag{${\rm OT}_{\rm mom}$} \\
{\rm s.t.}\quad 
& M_k^x =  \mu\({\rm R}_x\(\Phi_{k}\Phi_{k}^\top\)\),\ M_k^y =  \nu\({\rm R}_y\(\Phi_{k}\Phi_{k}^\top\)\),\ \forall k\in [K] \label{cmomk} \tag{5.1a} \\
& M_{ij}^x = \mu\({\rm R}_x\(\Phi_i \Phi_j^\top\)\),\ M_{ij}^y = \nu\({\rm R}_y\(\Phi_i \Phi_j^\top\)\),\ \forall ij\in E(\G) \label{cmomij} \tag{5.2a} \\
& (M_i,M_j,M_{ij})\in \C_{ij}, \ \forall ij \in [K]_2,\ M \succeq 0 \ \text{and consistent}, \label{gpsd} \tag{5b}
\end{align}
where $M$ is the block moment matrix defined in \eqref{matPSD} and the conditions \eqref{cmomk}--\eqref{cmomij} refer to the OT marginal constraints \eqref{consmomk}, \eqref{consmomij}, which stem from the OT marginal constraints \eqref{consrmarg} of \eqref{OT1}.  
Constraints \eqref{gpsd} relax \eqref{cons2}–\eqref{cons4} in \eqref{OTmr}, reflecting the representability of $\pi_k$ and $\pi_{ij}$ as one- and two-marginals of the probability measure $\pi$ in \eqref{consrPg}.

\begin{rem}\label{remsupp}
{Consider the optimal transport in continuous setting. Suppose, for example, that the local supports are semialgebraic sets}
\begin{equation}
{
\X_{k}=\{x_{k}:\ a^x_{k,\ell}(x_{k})\geq 0,\ \ell\in [L_k^x]\},\qquad 
\Y_{k}=\{y_{k}:\ a^y_{k,\ell}(y_{k})\geq 0,\ \ell\in [L_k^y]\}.
}
\end{equation}
{Here the polynomials $a^x_{k,\ell}$ and $a^y_{k,\ell}$ describe the support constraints. In the moment SOS relaxation, these constraints are incorporated by localizing matrices \cite[Sec.~3.2]{mula2024moment} \cite[Sec.~3.2.1]{lasserre2009moments}. For instance, the condition that a measure is supported on $a^x_{k,\ell}\geq0$ is relaxed by requiring}
\begin{equation}
{
\pi_k(q^2 a^x_{k,\ell})\geq 0
}
\notag
\end{equation}
{for all test polynomials $q$ of the allowed degree, which gives a linear matrix inequality in the moment of $\pi_k$. This remark only concerns the cluster moment relaxation for the continuous optimal transport. In the finite-state or discretized setting used in our marginal relaxation, the support is already encoded by the chosen grid or finite state space, so these additional localizing matrix constraints are not needed.}
\end{rem}

Problem \eqref{OTmom} is a finite-dimensional SDP solvable by standard solvers. However, the presence of multiple PSD blocks in $\C_{ij}$ together with the large PSD variable $M$ leads to complicated implementation and high computational cost. Similar to \eqref{OTmr-1} and \eqref{OTmr-2}, we therefore consider further relaxations of \eqref{OTmom} by dropping either the global or local positivity conditions.

\begin{align}\label{OTmom-1}
\inf_{(M_k,M_{ij})_{[K],E(\G)}}\ &\sum_{k \in [K]} \langle C_k, M_k \rangle 
\tag{${\rm OT}_{\rm mom}^1$} \\
{\rm s.t.}\quad 
& M_k^x =  \mu\({\rm R}_x\(\Phi_{k}\Phi_{k}^\top\)\),\ M_k^y =  \nu\({\rm R}_y\(\Phi_{k}\Phi_{k}^\top\)\),\ \forall k\in [K] \label{cmomspk} \tag{6.1a} \\
&M_{ij}^x = \mu\({\rm R}_x\(\Phi_i \Phi_j^\top\)\),\ M_{ij}^y = \nu\({\rm R}_y\(\Phi_i \Phi_j^\top\)\),\ \forall ij\in E(\G) \label{cmomspij} \tag{6.2a} \\
& (M_i,M_j,M_{ij})\in \C_{ij}\ \text{and consistent},\ \forall ij \in E(\G). \label{spconsA} \tag{6b}
\end{align}

Problem \eqref{OTmom-1} is obtained from \eqref{OTmom} by dropping the global positivity constraint and removing the variables $M_{ij}$ for $ij\notin E(\G).$  
It is easier to solve than \eqref{OTmom}, as it involves fewer variables and constraints.

\begin{align}\label{OTmom-2}
\inf_{M}\quad & \sum_{k \in [K]} \langle C_k, M_k \rangle 
  \tag{${\rm OT}_{\rm mom}^2$} \\
  {\rm s.t.}\quad 
& M_k^x =  \mu\({\rm R}_x\(\Phi_{k}\Phi_{k}^\top\)\),\ M_k^y =  \nu\({\rm R}_y\(\Phi_{k}\Phi_{k}^\top\)\),\ \forall k\in [K] \label{cmomsimk} \tag{7.1a} \\
& M_{ij}^x = \mu\({\rm R}_x\(\Phi_i \Phi_j^\top\)\),\ M_{ij}^y = \nu\({\rm R}_y\(\Phi_i \Phi_j^\top\)\),\ \forall ij\in E(\G)\label{cmomsimij} \tag{7.2a} \\
& M \succeq 0 \ \text{and consistent}. \label{conBsim} \tag{7b}
\end{align}
Problem \eqref{OTmom-2} is obtained from \eqref{OTmom} by dropping the local positivity constraints.  

\begin{rem}\label{remmom}
{The basis vector $\Phi$ may contain redundant entries. For example, when using a monomial basis, the constant term $1$ appears in each cluster basis $\Phi_k$, and hence occurs multiple times in $\Phi$. In practice, we remove such redundant entries from $\Phi$ beforehand.}

From \eqref{objmomk} and \eqref{matPSD}, the consistency condition \eqref{conBsim} affects only the diagonal blocks $M_k$'s ({after removing redundant rows and columns}).  
Thus, all affine constraints and the objective function involve only the blocks $(M_k,M_{ij})_{[K],E(\G)}.$  
Consequently, as in Remark~\ref{remmr}, the chordal conversion method of \cite{tang2024exploring} can be applied to exploit the chordal sparsity of $\G$ and decompose \eqref{OTmom-2} into a multi-block SDP with small matrix variables. {In addition, when the tree-width of $\G,$ the cluster size and relaxation degree are constant-size, fast interior point solver \cite{gu2022faster} can solve the multi-block problem in linear-time complexity $\O(d).$}
\end{rem}

{
\subsection{Comparison with existing sum-of-squares methods}\label{subseccomp}

Our cluster moment relaxation is related to the moment-SOS hierarchy of
Mula and Nouy~\cite{mula2024moment} and to the RKHS-based SOS relaxation of
Vacher, Muzellec, Rudi, Bach, and Vialard~\cite{vacher2021dimension}. We compare
these approaches from the viewpoint of sparsity, statistical error, and SDP size.

{\bf Comparison with \cite{mula2024moment}.} Mula and Nouy~\cite{mula2024moment} proposed a moment-SOS hierarchy for OT based on all monomials up to a prescribed degree $n$. While this yields a systematic sequence of relaxations, the number of moments grows as $\O(d^{2n})$, leading to substantial sample and computational complexity in high dimensions. In contrast, our cluster moment relaxation exploits the underlying local structure and requires only a small collection of moments, resulting in significantly reduced sample and computational costs.

Our cluster moment relaxation reduces the sample error by keeping the same moment-SOS principle but restricting the basis to local clusters and edges in a reference graph $\G$. If $r$ denotes the maximum cluster size, then the number of prescribed moments is $\O(r^{2n}(K+|E(\G)|)),$ which is much better than $\O(d^{2n})$ when the cluster size $r$ is small. Thus the statistical error is smaller since the number of moments we need to estimate is smaller. This is quantified in Proposition~\ref{prop:samplecomplex} and Theorem~\ref{thm:sdpvalue}.

Our method also reduces the computational cost. The SDP problem in \cite{mula2024moment} has matrix dimension $\O(d^n)$ and number of linear constraints $\O(d^{2n})$. On the other hand, the cluster moment SDP only has matrix dimension $\O(dr^n)$ and number of constraints $\O(r^{2n}(K+|E(\G)|))$. Moreover, as discussed in Remark~\ref{remmom}, when the tree-width of $\G,$ the relaxation degree $n$ and the cluster size are fixed as constants, the SDP problem can be solved in linear time $\O(d).$ This scalability is also visible
in the second panels of Figures~\ref{Fig:GGMS}--\ref{Fig:block-ising}.

\vspace{0.5em}

{\bf Comparison with \cite{vacher2021dimension}.}
Vacher et al.~\cite{vacher2021dimension} develop an SOS relaxation in a
reproducing kernel Hilbert space and provide guarantees under smoothness assumptions
on the source and target distributions; see \cite[Assumption~1]{vacher2021dimension}. While sample complexity for achieving $\epsilon$ accuracy is $\O(\epsilon^{-2}),$ the hidden constant can depend exponentially on $d$ as stated in \cite[Theorem~2]{vacher2021dimension}.

Our approach improves sample complexity by restricting the relaxation to a small collection of local moments. For sparse Gaussian models, Theorem~\ref{thm:gauss-sample} yields a sample complexity of $\O(d\epsilon^{-2}+|E(\G)|\epsilon^{-1})$, which is linear in $d$ when $\G$ is sparse. More generally, although convergence rates for the moment-SOS hierarchy beyond the Gaussian setting remain unknown, Theorem~\ref{thm:sdpvalue} shows that the sample complexity of the cluster moment relaxation is governed by the sparsity of the reference graph $\G$ rather than the ambient dimension. In particular, when the relaxation degree and cluster size are fixed independently of $d$, the sample complexity is of order $\O((K+|E(\G)|)^2\epsilon^{-2})$, exhibiting only polynomial dependence on the dimension.

Our approach also reduces the computational cost. The SDP in~\cite{vacher2021dimension} has dimension proportional to the sample size $N$, leading to an interior-point complexity of $\O(N^{3.5})$. In contrast, once the empirical moments have been estimated, the size of the cluster moment SDP depends only on the number of local basis functions \eqref{matPSD} and is independent of $N$. For fixed-degree relaxations with bounded cluster size, the SDP size scales linearly with $d$. Moreover, under a bounded tree-width assumption, Remark~\ref{remmom} yields linear-time complexity in $d$. Since evaluating a degree-$n$ monomial requires only $\O(n)$ arithmetic operations, the cost of estimating all prescribed moments is $\O((K+|E(\G)|)N)$ when the degree and cluster size are fixed. Thus, the moment estimation cost is linear in the sample size $N$.}

\section{Theoretical analysis}\label{Sec:analysis}

{In this section, we provide theoretical analysis of the convex relaxation methods proposed in the previous section. We focus on the squared cost setting
\begin{equation}
c(x,y)=\|x-y\|^2,\qquad \X=\Y=\R^d.
\end{equation}
We assume that all measures considered in this section are absolutely continuous with respect to Lebesgue measure. For a measure $\eta$, we denote its density by $\rho_\eta$, i.e.,
$$
\eta({\rm d}z) = \rho_\eta(z)\,{\rm d}z.
$$
The dual problem of \eqref{OT} is:

\begin{equation}\label{OTd0}
\sup_{f,g} \Bigg\{ 
   \int_{\R^d} f(x)\,{\rm d}\mu(x) 
 + \int_{\R^d} g(y)\,{\rm d}\nu(y) 
 : \ \|x-y\|^2 - f(x) - g(y) \geq 0 
 \Bigg\}.
\end{equation}

Our convex relaxation methods introduce three types of errors. To organize them,
fix a partition $\I=\{I_a\subset [d]:\ a\in [K]\}$ (see Subsubsection~\ref{subsubsec:mf-dual}), a reference graph $\G$, and a relaxation degree $n$. We use
the following notation:
\begin{itemize}
\item $\operatorname{MR}_{\I,\G}$ is the value of the chosen marginal relaxation among
\eqref{OTmr}--\eqref{OTmr-2};
\item $\operatorname{OPT}_{n}$ is the value of the corresponding degree-$n$ moment SDP
with ground truth moments;
\item $\widehat{\operatorname{OPT}}_{n}$ is the value of the cluster moment SDP whose
moments are estimated from samples.
\end{itemize}
With this notation, the total error can be decomposed as
\begin{equation}
\big|\widehat{\operatorname{OPT}}_{n}-W_2^2(\mu,\nu)\big|
\le
E_{\rm app}+E_{\rm trunc}+E_{\rm stat},
\label{eq:three-errors}
\end{equation}
where the errors are defined as
\begin{itemize}
\item $E_{\rm app}:=|W_2^2(\mu,\nu)-\operatorname{MR}_{\I,\G}|$ is the \emph{approximation error}, namely the difference between the original OT value and the value of the underlying marginal relaxations \eqref{OTmr}--\eqref{OTmr-2}.
\item $E_{\rm trunc}:=|\operatorname{MR}_{\I,\G}-\operatorname{OPT}_{n}|$ is the \emph{truncation error} of the moment SOS hierarchy applied to the marginal relaxations.
\item $E_{\rm stat}:=|\operatorname{OPT}_{n}-\widehat{\operatorname{OPT}}_{n}|$ is the \emph{statistical error} from estimating the prescribed moments of $\mu$ and $\nu$ using samples.
\end{itemize}

In Subsection~\ref{Sec:ThmGauss}, we analyse the above errors for Gaussian measures. Theorem~\ref{thmexp} shows that for degree-1 moment relaxation, the error $E_{\rm app}+E_{\rm trunc}$ goes to zero exponentially fast as the connectivity radius $h$ in the reference graph $\G=G^h$ (see \eqref{defipower} for definition of $G^h.$) increases, where $G$ is the sparsity pattern of the Gaussian precision matrix. This implies that our method achieves good accuracy while preserving the sparse structure. In addition, Theorem~\ref{thm:gauss-sample} establishes the statistical error bound $\O(\sqrt{d/N}+|E(\G)|/N)$, where the constant in the second term depends on the density of the reference graph $\G$. Consequently, compared with the full relaxation, for which $\G$ is the complete graph and the statistical error is $\O(\sqrt{d/N}+d^2/N)$, a sparse graph with $|E(\G)|=\O(d)$ yields the improved bound $\O(\sqrt{d/N}+d/N)$. This demonstrates that, even in the Gaussian setting, exploiting sparsity can substantially improve sample complexity.

In Subsection~\ref{subsec:mf}, we move beyond the Gaussian case and study local perturbations of mean-field product measures. Subsubsection~\ref{subsubsec:mf-results} states the main approximation result, Theorem~\ref{thm:mfloc}, which bounds $E_{\rm app}$ for the marginal relaxation \eqref{OTmr-1} and shows that it captures the local linearization of the quadratic Wasserstein distance~\cite{peyre2018comparison}, also known as the weighted negative homogeneous Sobolev norm~\cite[equation (2.5)]{peyre2018comparison}. For this non-Gaussian setting, we do not analyze $E_{\rm trunc}$, since obtaining a quantitative convergence rate for the moment-SOS hierarchy approximating the continuous marginal relaxation is substantially more involved. Nevertheless, under fixed relaxation degree and bounded relaxation cluster size, Corollary~\ref{coro:mf-stat} in Subsubsection~\ref{subsubsec:mf-results} gives a statistical error bound of order $\O(dN^{-1/2})$, corresponding to a sample complexity of $\O(d^2\epsilon^{-2})$. 

In Subsection~\ref{subsec:samplecomplex}, we analyse the statistical error $E_{\rm stat}$ for general cluster moment relaxation. Our main result Theorem~\ref{thm:sdpvalue} provides the error bound of order $\O((K+|E(\G)|)N^{-1/2})$ in the setting where both relaxation degree and cluster size are fixed as constants independent of the dimension $d.$ This results shows that our method has smaller statistical error for sparse graph $\G.$ }

\subsection{Gaussian distribution}\label{Sec:ThmGauss}

{In this section, we analyze the approximation quality of the cluster moment relaxation in the case where both marginals are Gaussian. Theorem~\ref{thmexp} provides a bound on the combined error $E_{\rm app}+E_{\rm trunc}$ for the degree-$1$ cluster moment relaxation. It shows that the error decays exponentially as the connectivity radius $h$ in the reference graph $\G=G^h$ (see \eqref{defipower}) increases, where $G$ is the sparsity pattern of the Gaussian precision matrix. Thus, the cluster moment SDP can achieve high accuracy while preserving a good level of sparsity. Theorem~\ref{thm:gauss-sample} further establishes a statistical error of order $\O(\sqrt{d/N}+(d+|E(G^h)|)/N)$. It shows that the statistical error for a sparse reference graph $\G=G^h$ is $\O(\sqrt{d/N}+d/N),$ which is better than that of the full degree-1 moment relaxation $\O(\sqrt{d/N}+d^2/N).$ Thus, by utilizing the sparsity, our cluster moment relaxation improves the OT sample complexity even in the well-studied Gaussian setting.}

Let $\mu=\mathcal{N}(m_1,\Sigma_1),\nu=\mathcal{N}(m_2,\Sigma_2)$ with means $m_1,m_2 \in \R^d$ and {covariances} $\Sigma_1,\Sigma_2 \in \S^d_{++}$. For the quadratic cost $c(x,y)=\|x-y\|^2$, problem \eqref{OT} reduces to the squared
Wasserstein-2 distance, which admits the closed form
\begin{equation}\label{W2GGM}
W_2^2(\mu,\nu)
= \|m_1-m_2\|^2 
+ \Tr\!\Big(\Sigma_1+\Sigma_2 - 2(\Sigma_1^{1/2}\Sigma_2\Sigma_1^{1/2})^{1/2}\Big).
\end{equation}
Since Gaussians are fully characterized by first and second moments, it suffices to take clusters of single variables and a degree-1 monomial basis $\{1,s\}.$ With these choices, the relaxation \eqref{OTmom-2} reduces to the SDP
\begin{align}\label{GSmom}
\min_X\ &\ \Tr(Z_1)+\Tr(Z_2)-2\Tr(Y) \\
{\rm s.t.}\ &\ {[Z_1]_{\G}}={[\Sigma_1+m_1m_1^\top]_{\G}},\quad 
              {[Z_2]_{\G}}={[\Sigma_2+m_2m_2^\top]_{\G}}, \notag\\
&\ X=\begin{bmatrix}
1& m_1^\top & m_2^\top \\
m_1 & Z_1 & Y \\
m_2 & Y^\top & Z_2
\end{bmatrix} \in \S^{2d+1}_+,
\end{align}
where $Z_1,Z_2\in \S^d,Y\in \R^{d\times d}$ are decision variables. Here, $Y$ represents the cross-covariance between $x$ and $y$ and the operator {$[\,\cdot\,]_{\G}$} projects a matrix onto the sparsity pattern $\G$ (Definition~\ref{itm:third}). Note that, $\Sigma_i+m_im_i^\top$ is simply the second moment matrix of $\mu$ or $\nu.$ 

We now discuss how to reduce the dimension of \eqref{GSmom} using chordal conversion. For this, we rely on the following lemma, which is an immediate corollary of the classical results of Grone and Agler~\cite{agler1988positive,grone1984positive}.

\begin{lem}\label{lemGS}
Consider the linear SDP
\begin{equation}\label{lSDP}
\min\left\{ \langle A_0,X\rangle:\ \langle A_i,X\rangle=b_i\ \forall i\in[m],\ X\in\S_+^n \right\},
\end{equation}
where each $A_i\in {\S_G}$ for $i\in[m]\sqcup\{0\}$ (Definition~\ref{itm:third}).  
If $G$ is chordal with maximal cliques $V_1,\ldots,V_p\subset[n]$, then \eqref{lSDP} is equivalent to
\begin{equation}\label{lSDPmt}
\min\left\{ \langle A_0,X\rangle:\ \langle A_i,X\rangle=b_i\ \forall i\in[m],\ X\in {\S_G},\ X_{V_t,V_t}\succeq 0\ \forall t\in[p] \right\},
\end{equation}
which is a multi-block SDP with block sizes $|V_1|,|V_2|,\ldots,|V_p|$.
\end{lem}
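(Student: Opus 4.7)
The plan is to establish equivalence of the feasible sets of \eqref{lSDP} and \eqref{lSDPmt} via two mutually inverse operations: projection onto $\S^n(G,0)$ and chordal PSD completion. The pivotal observation, used in both directions, is that because every coefficient matrix $A_i$ lies in $\S(G,0)$, the inner product $\langle A_i,X\rangle$ depends only on the diagonal entries of $X$ and the entries on edges of $G$; the off-pattern entries are invisible to both the objective and the affine constraints. Hence any two matrices that coincide on $\S(G,0)$ are indistinguishable as far as the problem data is concerned.

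For the direction \eqref{lSDP}$\Rightarrow$\eqref{lSDPmt}, given a feasible $X\in\S_+^n$ I would set $\tilde X:=[X]_{G,0}$. Since each maximal clique $V_t$ is a complete subgraph of $G$, the projection leaves the principal block $\tilde X_{V_t,V_t}$ equal to $X_{V_t,V_t}$, and this is PSD as a principal submatrix of a PSD matrix. The affine constraints and the objective are preserved by the opening observation, so $\tilde X$ is feasible for \eqref{lSDPmt} with the same cost.

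For the direction \eqref{lSDPmt}$\Rightarrow$\eqref{lSDP}, given a feasible $\tilde X$ I would invoke the Grone--Agler chordal PSD-completion theorem \cite{agler1988positive,grone1984positive}: when $G$ is chordal, any symmetric matrix whose entries are specified on the diagonal and on the edges of $G$, and whose clique submatrices are all PSD, admits a completion $X\in\S^n_+$ with $[X]_{G,0}=\tilde X$. Applying this to $\tilde X$ yields such an $X$, and again the opening observation gives $\langle A_i,X\rangle=\langle A_i,\tilde X\rangle=b_i$ and identical objective value, so $X$ is feasible for \eqref{lSDP}.

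The main non-trivial ingredient is the Grone--Agler completion theorem, a classical result in combinatorial matrix theory; chordality of $G$ is essential there because the maximal-clique condition is known to be necessary and sufficient only in the chordal case. Once this is cited, the rest of the argument reduces to bookkeeping about support-preserving projections, and the two feasible-to-feasible maps described above preserve objective values, so the optima of \eqref{lSDP} and \eqref{lSDPmt} coincide.
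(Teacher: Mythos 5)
Your proposal is correct and matches the paper's intended argument: the paper presents this lemma as an immediate corollary of the Grone--Agler chordal PSD-completion theorem, which is precisely the key ingredient you invoke, together with the routine observation that matrices agreeing on $\S(G,0)$ are indistinguishable to the objective and constraints since all $A_i\in\S(G,0)$. The projection/completion bookkeeping you supply is exactly the standard way to flesh out that citation, so there is nothing to add.
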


Using Lemma~\ref{lemGS}, we obtain the following result for \eqref{GSmom}.

\begin{prop}\label{propGS}
If the reference graph $\G$ (Definition~\ref{itm:third}) in \eqref{GSmom} is chordal with maximal cliques $V_1,\ldots,V_p\subset[d]$, then \eqref{GSmom} is equivalent to a multi-block SDP with block sizes $2|V_1|+1,\ 2|V_2|+1,\ \ldots,\ 2|V_p|+1.$
\end{prop}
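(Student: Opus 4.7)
The plan is to recast \eqref{GSmom} as a linear SDP of the form \eqref{lSDP} with a suitably chosen chordal sparsity graph, and then invoke Lemma~\ref{lemGS}. Indexing $X$ by $\{0\}\cup[d]\cup[d]'$ with $[d]'=\{d+1,\ldots,2d\}$, the equality constraints prescribe: the corner $X_{0,0}=1$; the first row/column entries $X_{0,i}=(m_1)_i$ and $X_{0,d+i}=(m_2)_i$; and the diagonals together with on-pattern off-diagonals of the two diagonal blocks coming from $[\Sigma_r+m_rm_r^\top]_{\G,0}$ for $r=1,2$. The objective depends only on $X_{i,i}$, $X_{d+i,d+i}$, and the ``paired'' cross-block entries $X_{i,d+i}$. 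Therefore all coefficient matrices have support contained in the graph $G$ on $\{0\}\cup[d]\cup[d]'$ whose edges are: (i) $(0,v)$ for every $v\neq 0$; (ii) $(i,j)$ and $(d+i,d+j)$ for $ij\in E(\G)$; (iii) $(i,d+i)$ for $i\in[d]$; (iv) $(i,d+j)$ for $ij\in E(\G)$. The entries indexed by (iv) carry zero coefficient in every constraint and in the objective, so including them keeps all $A_k\in \S^{2d+1}(G,0)$; they are added purely to make $G$ chordal with the desired clique structure.

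Next, I would verify that $G$ is chordal and determine its maximal cliques. For chordality, I would lift any perfect elimination ordering $\pi$ of $\G$ to the interleaved ordering $d+\pi(1),\pi(1),d+\pi(2),\pi(2),\ldots,d+\pi(d),\pi(d),0$ on $V(G)$ and check that each vertex's later neighborhood is a clique; this uses only that the later $\G$-neighborhoods $N^+_\G(\pi(k))$ are $\G$-cliques and that edges (ii)--(iv) then pair all copies correctly. For the maximal cliques, since $0$ is universal in $G$ every maximal clique contains it, so any maximal clique has the form $\{0\}\cup A\cup\{d+j:j\in B\}$ with $A,B\subseteq[d]$. The edge conditions (ii) and (iv), together with (iii) covering the case $i=j$, imply that $A$, $B$, and in fact $A\cup B$ are all cliques of $\G$; maximality then forces $A=B=V_t$ for some maximal clique $V_t$ of $\G$, yielding the maximal clique $W_t=\{0\}\cup V_t\cup\{d+i:i\in V_t\}$ of cardinality $2|V_t|+1$.

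Finally, applying Lemma~\ref{lemGS} to the linear SDP form of \eqref{GSmom} with this graph $G$ replaces the PSD constraint $X\in \S_+^{2d+1}$ by $X\in \S^{2d+1}(G,0)$ together with $X_{W_t,W_t}\succeq 0$ for each $t\in[p]$, producing the claimed multi-block SDP with block sizes $2|V_1|+1,\ldots,2|V_p|+1$. The main obstacle I anticipate is pinning down the maximal cliques cleanly --- specifically, ruling out ``mismatched'' cliques $\{0\}\cup A\cup\{d+j:j\in B\}$ with $A\neq B$; the bipartite edges of type (iv) are exactly what forces $A\cup B$ to be a single $\G$-clique, and maximality of $W$ within $G$ is what collapses $A$ and $B$ onto a common maximal clique of $\G$.
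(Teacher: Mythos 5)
Your proposal is correct and follows essentially the same route as the paper: lift $\G$ to a graph on $2d+1$ vertices whose maximal cliques are $\{0\}\cup V_t\cup(d+V_t)$, observe all data matrices of \eqref{GSmom} are supported on it, and invoke Lemma~\ref{lemGS}. Your explicit verification of chordality and of the maximal-clique structure, and your remark that the cross edges $(i,d+j)$, $i\neq j$, carry zero coefficients and are added only to achieve chordality (so only containment of the data sparsity in $G$ is needed), are details the paper's one-line proof glosses over, but the argument is the same.
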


\begin{proof}
Define a graph $G$ on vertices $[2d+1]$ whose edges consist of those in the cliques  
\begin{equation}\label{deficlique}
V_i' := \{1\}\,\sqcup\, (V_i+1)\,\sqcup\, (V_i+1+d),\qquad i\in[p].
\end{equation}
This $G$ is precisely the sparsity pattern of \eqref{GSmom}, and it is chordal with maximal cliques $V_1',\ldots,V_p'$.  
Applying Lemma~\ref{lemGS} then yields the desired multi-block structure for \eqref{GSmom}, with block sizes $2|V_i|+1$ for $i\in[p]$.  
\end{proof}

\begin{rem}\label{remGS}
When $\G$ is a tree, its maximal cliques are exactly the edges of $\G$. In this case, Proposition~\ref{propGS} implies that \eqref{GSmom} can be reformulated as a multi-block SDP in which each block has size~5. This reduces the dense matrix dimension from $2d+1$ to blocks of constant size. Moreover, in this setting, \eqref{GSmom} coincides with the sparse cluster moment relaxation \eqref{OTmom-1}, where the constraint~\eqref{spconsA} corresponds to the PSD conditions on these small matrix blocks.
\end{rem}

Proposition~\ref{propGS} shows how the sparsity of the reference graph $\G$ (Definition~\ref{itm:third}) can be leveraged to reduce the dimension of the SDP~\eqref{GSmom}. We next establish a result regarding the tightness of our cluster moment relaxation for OT between Gaussian distributions.

\begin{theo}\label{thmexp}
Suppose $\Sigma_1,\Sigma_2 \in \S^d_{++}$ satisfy 
$a I_d \preceq \Sigma_1,\Sigma_2 \preceq b I_d$ for some $a,b>0$, 
and the precision matrices $\Sigma_1^{-1},\Sigma_2^{-1} \in {\S_G}$ for a graph $G=([d],\E)$. 
Let ${\rm opt}_{\G}$ denote the optimal value of \eqref{GSmom} (special case of \eqref{OTmom-2}). We have that:
\begin{itemize}
\item[(i)] If $\G$ is a complete graph, then the relaxation is exact, that is, ${\rm opt}_{\G}=W_2^2(\mu,\nu).$
\item[(ii)] If $\G=G^h$ for some $h\in \N$ (Definition~\ref{itm:third}), then there exist constants $C>0$ and $\rho>1$, depending only on $a,b$, such that
\begin{equation}\label{thmine}
\big|\,{\rm opt}_{G^h} - W_2^2(\mu,\nu)\,\big| \;<\; C d \rho^{-h}.
\end{equation}
\end{itemize}
\end{theo}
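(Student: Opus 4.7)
The proof naturally splits along the two cases.

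Part (i) is a direct reduction. When $\G$ is the complete graph, the constraint $[Z_i]_{\G,0}=[\Sigma_i+m_im_i^\top]_{\G,0}$ fixes $Z_i=\Sigma_i+m_im_i^\top$ entrywise. Taking the Schur complement of $X$ with respect to its $(1,1)$ scalar entry turns \eqref{GSmom} into the classical Bures--Wasserstein SDP in $\tilde Y:=Y-m_1m_2^\top$: minimize $\Tr(\Sigma_1)+\Tr(\Sigma_2)+\|m_1-m_2\|^2-2\Tr(\tilde Y)$ over $\{\tilde Y:\begin{pmatrix}\Sigma_1&\tilde Y\\ \tilde Y^\top&\Sigma_2\end{pmatrix}\succeq 0\}$. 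The maximum of $\Tr(\tilde Y)$ equals $\Tr((\Sigma_1^{1/2}\Sigma_2\Sigma_1^{1/2})^{1/2})$, yielding ${\rm opt}_\G=W_2^2(\mu,\nu)$.

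For part (ii), the same Schur reduction gives ${\rm opt}_{G^h}=\Tr(\Sigma_1)+\Tr(\Sigma_2)+\|m_1-m_2\|^2-2\max\Tr(\tilde Y)$ where the max is over the relaxed feasible set (in which $A_i:=\Sigma_i+\Delta_i$ with $\Delta_i\in \S(G^h,c)$ is also free); hence $W_2^2-{\rm opt}_{G^h}=2\bigl(\max\Tr(\tilde Y)-\Tr((\Sigma_1^{1/2}\Sigma_2\Sigma_1^{1/2})^{1/2})\bigr)\geq 0$. I upper-bound this excess via SDP duality. Slater's condition is easily verified, so strong duality expresses the relaxed maximum as $\min\{\tfrac{1}{2}\Tr(U_{11}\Sigma_1+U_{22}\Sigma_2):U_{ii}\in \S(G^h,0),\ \begin{pmatrix}U_{11}&I\\ I&U_{22}\end{pmatrix}\succeq 0\}$. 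For the unconstrained (complete-graph) dual, the KKT conditions give the closed form $U_{ii}^\star=\Sigma_i^{-1/2}(\Sigma_i^{1/2}\Sigma_{3-i}\Sigma_i^{1/2})^{1/2}\Sigma_i^{-1/2}$, achieving value $\Tr((\Sigma_1^{1/2}\Sigma_2\Sigma_1^{1/2})^{1/2})$. The task thus reduces to exhibiting a dual-feasible $\tilde U_{ii}\in \S(G^h,0)$ whose objective exceeds this by at most $Cd^{3/2}\rho^{-h}$.

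The crucial ingredient is exponential entry decay in $\dist_G$. Since $\Sigma_i^{-1}\in \S(G,0)$ has spectrum in $[1/b,1/a]$, Chebyshev approximation of $x\mapsto 1/x$ on $[1/b,1/a]$ by a polynomial of degree $n$ yields operator-norm error $\leq\rho_0^{-n}$ and a polynomial expression in $\Sigma_i^{-1}$ lying in $\S(G^n,0)$; hence $|(\Sigma_i)_{jk}|\leq C_1\rho^{-\dist_G(j,k)}$. Iterating this principle through $x\mapsto\sqrt x$ and the composite function defining $U_{ii}^\star$, using that products of matrices with exponentially decaying entries in $\dist_G$ retain such decay, gives $|(U_{ii}^\star)_{jk}|\leq C_2\rho^{-\dist_G(j,k)}$. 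I then form $\tilde U_{ii}=[U_{ii}^\star]_{G^h,0}+F_{ii}$, where $F_{ii}\in \S(G^h,0)$ is chosen to restore the block-PSD constraint. The truncation alters the dual objective by $-\Tr([U_{ii}^\star]_{G^h,c}\Sigma_i)$, which Cauchy--Schwarz together with $\|[U_{ii}^\star]_{G^h,c}\|_F=O(d\rho^{-h})$ and $\|\Sigma_i\|_F\leq b\sqrt d$ bounds by $O(d^{3/2}\rho^{-h})$.

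The main obstacle is controlling $\Tr(F_{ii}\Sigma_i)$ at the same order $d^{3/2}\rho^{-h}$. A generic scalar correction $F_{ii}=\delta I$ would require $\delta=O(d\rho^{-h})$ (the operator-norm size of the Schur-complement defect $U_{11}^\star E_{22}+E_{11}U_{22}^\star$, where $E_{ii}:=U_{ii}^\star-[U_{ii}^\star]_{G^h,0}$) and thus cost $O(d^2\rho^{-h})$, which is too large. To achieve the sharper bound I plan to exploit the hypothesis $\Sigma_i^{-1}\in \S(G,0)\subseteq \S(G^h,0)$ to build $F_{ii}$ from $\Sigma_i^{-1}$ and its low-order polynomials (already $G^h$-sparse), supported only on the directions where truncation actually breaks PSD, identified by first-order perturbation of the Schur complement $U_{11}U_{22}-I$. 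Once $F_{ii}$ is controlled at the $d^{3/2}\rho^{-h}$ scale, weak duality yields ${\rm opt}_{G^h}\geq W_2^2(\mu,\nu)-Cd^{3/2}\rho^{-h}$, which combined with the trivial ${\rm opt}_{G^h}\leq W_2^2(\mu,\nu)$ completes the proof.
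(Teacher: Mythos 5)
Your part (i) and the overall plan for part (ii) -- pass to the dual, start from the closed-form complete-graph dual optimizer $U^\star_{ii}$, truncate it to the sparsity pattern of $G^h$, and repair positive semidefiniteness at small objective cost -- coincide with the paper's strategy (its dual variables $\widehat{\Lambda}_i$ are your $U^\star_{ii}$). The genuine gap is in the one step that actually produces the rate $d^{3/2}\rho^{-h}$: the repair term $F_{ii}$ is never constructed. You only announce a plan to build it from low-order polynomials of $\Sigma_i^{-1}$ ``supported on the directions where truncation breaks PSD,'' guided by a first-order perturbation of the Schur complement; a first-order argument cannot by itself certify feasibility of the block-PSD constraint, and no bound on $\Tr(F_{ii}\Sigma_i)$ is ever established. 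So the quantitative half of (ii) is not proved. (The intermediate entrywise-decay claims, e.g.\ that iterating Chebyshev approximations through $1/x$ and $\sqrt{x}$ and taking products preserves decay with uniform constants, are also only sketched, but they are not the main issue.)

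Moreover, the obstacle you set up for yourself is an artifact of your estimate rather than a real one. Since the off-diagonal blocks of the dual matrix are fixed ($\pm I$), feasibility is restored as soon as $\tilde U_{ii}\succeq U^\star_{ii}$; no Schur-complement analysis is needed, and the scalar shift $F_{ii}=\|E_{ii}\|_2 I$ with $E_{ii}:=[U^\star_{ii}]_{G^h,c}$ always works, at cost at most $2\Tr(\Sigma_i)\,\|E_{ii}\|_2\le 2bd\,\|E_{ii}\|_2$. What is too crude is bounding $\|E_{ii}\|_2$ through entrywise decay, which gives $O(d\rho^{-h})$. The paper bounds it spectrally via polynomial approximation: with $p_k$ the degree-$k$ Chebyshev approximant of $x^{-1/2}$ on $[b^{-2},a^{-2}]$, one has $\Sigma_1^{-1/2}p_k\big(\Sigma_1^{-1/2}\Sigma_2^{-1}\Sigma_1^{-1/2}\big)\Sigma_1^{-1/2}=p_k\big(\Sigma_1^{-1}\Sigma_2^{-1}\big)\Sigma_1^{-1}\in \S(G^{2k+1},0)$ exactly, so for $h\ge 2k+1$ the off-pattern part of $U^\star_{11}$ equals the off-pattern part of the approximation error, whose spectral norm is at most its Frobenius norm $\le \sqrt{d}\,C_{a,b}a^{-1}\rho_{a,b}^{-k}$ (and similarly for the second block after rewriting it as $\Sigma_2^{-1}\Sigma_1^{-1/2}\big(\Sigma_1^{1/2}\Sigma_2\Sigma_1^{1/2}\big)^{1/2}\Sigma_1^{1/2}$). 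This yields a total cost $O\big(d\cdot d^{1/2}\rho_{a,b}^{-k}\big)=O\big(d^{3/2}\rho^{-h}\big)$ with $\rho=\rho_{a,b}^{1/2}$: the scalar correction you dismissed already gives the theorem, whereas your route through entrywise decay loses a factor $\sqrt{d}$ and the structured $F_{ii}$ intended to recover it remains unproven.
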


In the above theorem, the assumption $\Sigma_1^{-1}, \Sigma_2^{-1} \in {\S_G}$ implies that the two Gaussians are Markovian with respect to the graph $G$, meaning that variables not connected by an edge in $G$ are conditionally independent given the others. This property, known as \emph{correlative sparsity}, arises in Gaussian graphical models \cite{wainwright2008graphical}. The full proof of Theorem~\ref{thmexp} is given in Appendix~\ref{sec:proof}.

Theorem~\ref{thmexp} (i) establishes the exactness of our cluster moment relaxation \eqref{GSmom} (equivalently, \eqref{OTmom-2}) for optimal transport between Gaussian distributions when $\G$ is complete. This matches the classical fact that a Gaussian distribution is fully determined by its first and second moments. In practice, these low-order moments can be estimated from a moderate number of samples. In contrast, the standard OT solver that substitutes $\mu,\nu$ in~\eqref{OT} with their empirical measures suffers from the curse of dimensionality and requires exponentially many samples in~$d$ to achieve comparable accuracy.

Theorem~\ref{thmexp} (ii) further shows that even in the Gaussian case our convex relaxation improves computational efficiency. When $G$ is sparse, its neighborhood extensions $G^h$ (Definition~\ref{itm:third}) also remain sparse for small $h$, so the relaxation imposes only a limited number of moment constraints. The exponential approximation rate in~\eqref{thmine} indicates that a small $h$ already yields an accurate approximation of the true Wasserstein distance. This sparsity enables the use of chordal conversion (see Proposition~\ref{propGS}) to further reduce the dimension and computational cost of solving the SDP~\eqref{GSmom}.

{
We now discuss the sampling error in the Gaussian setting. Let
\begin{equation}
{\rm opt}_{\G}(m_1,m_2,\Sigma_1,\Sigma_2)
\notag
\end{equation}
denote the value of the Gaussian SDP \eqref{GSmom}. Equivalently, after taking the Schur
complement with respect to the leading scalar entry in \eqref{GSmom}, this value can be
written as
\begin{equation}\label{gauss-reduced-value}
{\rm opt}_{\G}(m_1,m_2,\Sigma_1,\Sigma_2)
=
\|m_1-m_2\|^2+\phi_{\G}(\Sigma_1,\Sigma_2),
\end{equation}
where the covariance part has the dual representation
\begin{equation}\label{gauss-cov-dual}
\phi_{\G}(\Sigma_1,\Sigma_2)
=
\sup_{\Lambda_1,\Lambda_2\in\S_{\G}}
\left\{
\Tr(\Sigma_1)+\Tr(\Sigma_2)
-\langle\Sigma_1,\Lambda_1\rangle
-\langle\Sigma_2,\Lambda_2\rangle:
\begin{bmatrix}
\Lambda_1&-I_d\\
-I_d&\Lambda_2
\end{bmatrix}\succeq0
\right\}.
\end{equation}
Here $\S_{\G}$ is defined in \eqref{defspset}.

\begin{theo}[Gaussian approximation and sampling error]\label{thm:gauss-sample}
Assume the setting of Theorem~\ref{thmexp}. In addition, assume
$\|m_1-m_2\|=O(\sqrt d)$. Let $\G=G^h$, and let $\Delta_h$ be the maximum degree of $G^h$.
Given $N$ independent samples from each Gaussian, let $\widehat m_i$ and
$\widehat\Sigma_i$ be the empirical mean and empirical covariance, and let
\begin{equation}
\widehat{\operatorname{opt}}_{G^h}:=
{\rm opt}_{G^h}(\widehat m_1,\widehat m_2,\widehat\Sigma_1,\widehat\Sigma_2).
\notag
\end{equation}
There are constants $C,c>0$, depending only on $a,b$ and the implicit constant in
$\|m_1-m_2\|=O(\sqrt d)$, such that, if
\begin{equation}\label{smcond}
\begin{aligned}
d+|E(G^h)|+\log(1/\delta)&\le cN,\\
(1+\Delta_h)
\sqrt{\frac{\log(d+|E(G^h)|)+\log(1/\delta)}{N}}
&\le c,
\end{aligned}
\end{equation}
then with probability at least $1-\delta$,
\begin{equation}\label{gauss-sample-total}
\big|\widehat{\operatorname{opt}}_{G^h}-W_2^2(\mu,\nu)\big|
\le
C d\rho^{-h}
+C\left(
\sqrt{\frac{d\log(1/\delta)}{N}}
+
\frac{d+|E(G^h)|+\log(1/\delta)}{N}
\right),
\end{equation}
where $\rho>1$ is the constant in Theorem~\ref{thmexp}.
\end{theo}

The proof of Theorem~\ref{thm:gauss-sample} is given in Appendix~\ref{sec:proof}.

\begin{rem}\label{rem:gauss-relative}
The error bound \eqref{gauss-sample-total} implies that sparsity reduces the statistical error. If $h$ is fixed and the graphs $G^h$
have uniformly bounded degree, then the statistical part of \eqref{gauss-sample-total}
becomes
\begin{equation}\label{spSDPsterr}
O\!\left(
\sqrt{\frac{d\log(1/\delta)}{N}}
+
\frac{d+\log(1/\delta)}{N}
\right),
\end{equation}
which is $O(\sqrt{d/N}+d/N)$ when logarithmic factors are suppressed. By contrast,
the same bound applied to the dense moment relaxation, for which
$|E(G^h)|=\Omega(d^2)$, gives
\begin{equation}
O\!\left(
\sqrt{\frac{d\log(1/\delta)}{N}}
+
\frac{d^2+\log(1/\delta)}{N}
\right),
\notag
\end{equation}
This is worse than the sparse bound \eqref{spSDPsterr} because of the $d^2/N$
term. Thus, even within the Gaussian
moment relaxation \eqref{GSmom}, exploiting locality reduces both computational and sample complexity.
\end{rem}
}

{
\subsection{Local perturbations of a mean-field product measure}\label{subsec:mf}

The analysis in the previous subsection relies on the closed-form expression for the squared OT distance between Gaussian measures. In this subsection, we move beyond the Gaussian setting and show that locality still leads to an accurate sparse relaxation for local perturbations of a mean-field product measure. Subsubsection~\ref{subsubsec:mf-model} introduces the perturbative model and locality assumptions. Subsubsection~\ref{subsubsec:mf-results} states the main approximation result, Theorem~\ref{thm:mfloc}, which bounds $E_{\rm app}$ and shows that the marginal relaxation captures the local linearization of the squared Wasserstein distance, also known as the weighted negative homogeneous Sobolev norm~\cite{peyre2018comparison}. The same subsubsection also gives Corollary~\ref{coro:mf-stat}, which controls the statistical error when the relaxation degree and cluster size are fixed as constants independent of the dimension. We do not analyze the truncation error $E_{\rm trunc}$ here, since quantitative convergence rates for the moment-SOS hierarchy beyond the Gaussian setting are substantially more involved. The proof of Theorem~\ref{thm:mfloc} is then split according to its two main estimates: Subsubsection~\ref{subsubsec:mf-pde} develops the linearized PDE argument and proves the upper bound, while Subsubsection~\ref{subsubsec:mf-dual} constructs local dual certificates and proves the lower bound.

\subsubsection{Perturbative model and local structure}\label{subsubsec:mf-model}

Now, we state our models in detail. We first consider the cluster size to be 1, in which case, $x_i,y_i$ denote the $i$th coordinates of $x,y\in\R^d$. Later we will use larger clusters.  Let
\begin{equation}\label{eq:mfref}
    \mubar=\nubar=\bigotimes_{i=1}^d \mubar_i,
    \qquad
    \mubar_i(dx_i)=\rho_i(x_i)dx_i,
    \qquad
    \rho_i(x_i)=Z_i^{-1}e^{-V_i(x_i)} .
\end{equation}
We assume uniform strong convexity and smoothness of the one-site potentials:
\begin{equation}\label{eq:onesite}
    \inf_{i,x}V_i''(x)>0,
    \qquad
    \sup_{i,x}
    \left(
    V_i''(x)+|V_i^{(3)}(x)|+|V_i^{(4)}(x)|
    \right)<\infty .
\end{equation}
We perturb the reference by
\begin{equation}\label{eq:mfpert}
    \mu_\tau=(1+\tau H_\mu)\mubar,
    \qquad
    \nu_\tau=(1+\tau H_\nu)\mubar,
\end{equation}
where
\begin{equation}\label{eq:mf-local-H}
    H_\mu=\sum_{C\in\C} h_C^\mu(x_C),
    \qquad
    H_\nu=\sum_{C\in\C} h_C^\nu(x_C).
\end{equation}
Here $\C$ is a collection of nonempty subsets of $[d]$.

\begin{rem}
The form \eqref{eq:mfpert} is used for analytical convenience. It can be viewed as the first-order expansion of a normalized exponential perturbation such as $Z_{\mu,\tau}^{-1}e^{\tau H_\mu}\mubar$. Our analysis also applies to the corresponding exponential perturbation.
\end{rem}

\begin{assump}[Local perturbation structure]\label{as:mfstruct}
There are constants $s_{\C},r_{\C},M_{\rm pert}>0$, independent of $d$, such that:
\begin{enumerate}[label=(\roman*)]
    \item $|C|\le s_{\C}$ for every $C\in\C$;
    \item each coordinate appears in at most $r_{\C}$ local terms;
    \item the local perturbations satisfy
    \begin{equation}\label{eq:mfdloc}
        \max_{C\in\C}\max_{0\le |\alpha|\le 3}
        \Big(\|\partial^\alpha h_C^\mu\|_\infty
        +\|\partial^\alpha h_C^\nu\|_\infty\Big)
        \le M_{\rm pert} .
    \end{equation}
\end{enumerate}
After subtracting constants from the local terms, we assume without loss of generality that
\begin{equation}
    \int h_C^\mu\,d\mubar_C=
    \int h_C^\nu\,d\mubar_C=0,
    \qquad C\in\C.
\notag
\end{equation}
\end{assump}

\subsubsection{Main approximation and statistical results}\label{subsubsec:mf-results}

We now state the main perturbative approximation theorem. Some objects in the statement
are constructed below: $H_t$ and the smallness condition are defined in
\eqref{eq:mfeps}, $\psi$ is the solution of the weighted Poisson equation
\eqref{eq:cpoi}, equivalently the sum of the local PDE solutions $\psi_C$ in
\eqref{eq:mfpoi}, and the support condition on the partition $\I$ and tree $\G$ is
given in \eqref{eq:mf-support-condition}.

\begin{theo}[Mean-field local perturbation bound for \eqref{OTmr-1}]\label{thm:mfloc}
Assume \eqref{eq:onesite} and Assumption~\ref{as:mfstruct}. Suppose \eqref{eq:mfeps} holds and $|\tau|\|\nabla^2\psi\|_{\infty,{\rm op}}<2$. Let $\I=\{I_a:a\in[K]\}$ be a partition of $[d]$, and let $\G=([K],E(\G))$ be a tree satisfying \eqref{eq:mf-support-condition}. Then
\begin{align}\label{eq:mflocbd}
    &\frac{\tau^2}{4}
    \int\|\nabla\psi\|^2\,d\mubar
    \left[
    1-|\tau|s_{\C}r_{\C}^2M_{\rm pert}
    -\frac{|\tau|\|\nabla^2\psi\|_{\infty,{\rm op}}}
    {2(1-|\tau|\|\nabla^2\psi\|_{\infty,{\rm op}}/2)}
    (1+|\tau|s_{\C}r_{\C}^2M_{\rm pert})
    \right] \notag\\
    &\le
    \operatorname{MR}^{(1)}_{\I,\G}(\mu_\tau,\nu_\tau)
    \le
    W_2^2(\mu_\tau,\nu_\tau) \notag\\
    &\le
    \frac{\tau^2}{4}
    \int\|\nabla\psi\|^2\,d\mubar
    \left(
    1+|\tau|s_{\C}r_{\C}^2M_{\rm pert}
    +2|\tau|^2
    (d s_{\C}r_{\C}^2+s_{\C}^2r_{\C}^4)M_{\rm pert}^2
    \right).
\end{align}
\end{theo}

\begin{rem}\label{rem:mf-relative}
The leading term $\frac{\tau^2}{4}\int \|\nabla\psi\|^2\,d\mubar$ in Theorem~\ref{thm:mfloc} is exactly the linearization of the quadratic Wasserstein distance around the reference measure $\mubar$ \cite{peyre2018comparison}. Therefore, Theorem~\ref{thm:mfloc} shows that the sparse marginal relaxation captures the leading Wasserstein geometry of these local perturbations. Theorem~\ref{thm:mfloc} also gives a relative approximation statement whenever the perturbation is nontrivial. Dividing the gap estimate in Theorem~\ref{thm:mfloc} by this leading-order lower bound gives
\begin{equation}
    \frac{
    W_2^2(\mu_\tau,\nu_\tau)
    -\operatorname{MR}^{(1)}_{\I,\G}(\mu_\tau,\nu_\tau)}
    {W_2^2(\mu_\tau,\nu_\tau)}
    =
    O\!\left(
    |\tau|
    +|\tau|^2(d s_{\C}r_{\C}^2+s_{\C}^2r_{\C}^4)M_{\rm pert}^2
    \right),
\notag
\end{equation}
where the $O(|\tau|)$ term absorbs only the dimension-free first-order factors $s_{\C}r_{\C}^2M_{\rm pert}$ and $\|\nabla^2\psi\|_{\infty,{\rm op}}$ appearing in Theorem~\ref{thm:mfloc}. Since $s_{\C},r_{\C},M_{\rm pert}$ are independent of $d$, this is in particular $O(|\tau|+|\tau|^2d)$. Under the explicit scaling in \eqref{eq:mfeps}, the relative error is $O(|\tau|)$.
\end{rem}

The proof of the main Theorem~\ref{thm:mfloc} has three main steps. First, in Subsubsection~\ref{subsubsec:mf-pde}, the first-order transport direction is identified
through a weighted Poisson equation. Because both the reference measure and the
perturbation are local, this PDE decomposes into local equations on the sets $C\in\C$,
giving $\psi=\sum_C\psi_C$ with dimension-free derivative bounds. Second, in the same Subsubsection~\ref{subsubsec:mf-pde}, an explicit Benamou--Brenier path based on the
velocity $-\nabla\psi/2$ gives the upper bound on $W_2^2(\mu_\tau,\nu_\tau)$. Third, in Subsubsection~\ref{subsubsec:mf-dual}, a local approximation of the $c$-transform of $f_\tau=\tau\psi$ gives a
dual certificate for the sparse marginal relaxation. Evaluating this certificate and
using the Poisson identity yields the lower bound in Theorem~\ref{thm:mfloc}.

Theorem~\ref{thm:mfloc} and Remark~\ref{rem:mf-relative} address the approximation error $E_{\rm app}=|\operatorname{MR}_{\I,\G}(\mu_\tau,\nu_\tau)-W_2^2(\mu_\tau,\nu_\tau)|$ for the marginal relaxation. However, for the cluster moment relaxation, we do not have a quantitative rate for the truncation error $E_{\rm trunc}=\big|\operatorname{MR}_{\I,\G}-\operatorname{OPT}_{n}\big|$ because this would require a convergence rate for the moment-SOS hierarchy approximating the continuous marginal relaxation. Nevertheless, for a fixed degree $n$ and bounded relaxation cluster size, the statistical component $E_{\rm stat}$ can be controlled directly from Theorem~\ref{thm:sdpvalue}. We state it in the following corollary:

\begin{coro}[Statistical error for the mean-field perturbation model]\label{coro:mf-stat}
Assume the setting of Theorem~\ref{thm:mfloc} and Theorem~\ref{thm:sdpvalue}. Fix the relaxation degree $n$ and the maximum cluster size $\max_{a\in[K]} |I_a|\le r$ to be constants independent of $d$. Then, with probability at least $1-\delta$, the statistical error $E_{\rm stat}$ of the degree-$n$ cluster moment SDP satisfies
\begin{equation}\label{eq:mf-stat-bound-dual}
    E_{\rm stat}
    \lesssim
    d
    \left[\log\!\left(\frac{8A_{\rm tail}dN}{\delta}\right)\right]^n
    \sqrt{
    \frac{\log d+\log(1/\delta)}{N}
    } ,
\end{equation}
where $A_{\rm tail}$ is a constant in the Gaussian tail assumption~\ref{assump:sampletail}. 
\end{coro}

\begin{proof}
In the notation of Theorem~\ref{thm:sdpvalue}, $p(b)=\operatorname{OPT}_{n}$ and $p(\widehat b)=\widehat{\operatorname{OPT}}_{n}$. Since $\G$ is a tree in Theorem~\ref{thm:mfloc}, $|E(\G)|=K-1$. The bounded relaxation cluster size and fixed degree $n$ imply that the local basis size is uniformly bounded, and hence the number of moments $M_{\rm mom}$ is upper bounded by $(K+|E(\G)|)\binom{r+n}{n}^2=\O(d)$. Applying Theorem~\ref{thm:sdpvalue} gives \eqref{eq:mf-stat-bound-dual}.
\end{proof}

\subsubsection{Linearized PDE and upper bound}\label{subsubsec:mf-pde}

Now, we discuss the proof of Theorem~\ref{thm:mfloc} in detail. We first state some preliminaries.

\paragraph{Preliminaries.} For $t\in[0,1]$, define
\begin{equation}
    h_C^t:=(1-t)h_C^\mu+t h_C^\nu,
    \qquad
    H_t:=\sum_{C\in\C}h_C^t .
\notag
\end{equation}
Since each coordinate appears in at most $r_{\C}$ local terms, the number of sets in $\C$ is at most $d r_{\C}$. Hence Assumption~\ref{as:mfstruct} gives
\begin{equation}
    \|H_t\|_\infty\le d r_{\C}M_{\rm pert} .
\notag
\end{equation}
Thus positivity of $1+\tau H_t$ is dimension-dependent. We assume that, for the values of $\tau$ considered here,
\begin{equation}\label{eq:mfeps}
    1+\tau H_t(x)\ge \frac12,
    \qquad
    t\in[0,1],\ x\in\R^d,
    \qquad
    |\tau|d\le \frac{1}{2r_{\C}M_{\rm pert}} .
\end{equation}
Indeed, the last condition and the bound $\|H_t\|_\infty\le d r_{\C}M_{\rm pert}$ imply
$|\tau|\|H_t\|_\infty\le 1/2$, and hence the positivity condition above.

Write
\begin{equation}\label{eq:mftheta}
    \Theta:=H_\nu-H_\mu=
    \sum_{C\in\C}\theta_C(x_C),
    \qquad
    \theta_C:=h_C^\nu-h_C^\mu,
    \qquad
    \int\theta_C\,d\mubar_C=0.
\end{equation}
For the product mean-field measure, the weighted Poisson operator decomposes as
\begin{equation}\label{eq:mfL}
    L=\sum_{i=1}^d L_i,
    \qquad
    L_i u:=-\rho_i^{-1}\partial_i(\rho_i\partial_i u).
\end{equation}
For $U\subset[d]$, write $L_U:=\sum_{i\in U}L_i$.

\paragraph{Mean-field locality.}
The linearized transport direction is determined by a weighted Poisson equation. Indeed, if the source Kantorovich potential is $f_\tau=\tau\psi+o(\tau)$, then the corresponding map has the expansion
\begin{equation}
    T_\tau(x)=x-\frac{\tau}{2}\nabla\psi(x)+o(\tau).
\notag
\end{equation}
Since $\rho_{\nu_\tau}-\rho_{\mu_\tau}=\tau\Theta\bar\rho$, the first-order velocity field
\begin{equation}
    w:=-\frac12\nabla\psi
\notag
\end{equation}
should satisfy $\nabla\cdot(\bar\rho w)=-\Theta\bar\rho$. Equivalently,
\begin{equation}\label{eq:cpoi}
    L\psi=-2\Theta,
    \qquad
    L=\sum_{i=1}^dL_i .
    \tag{PE}
\end{equation}
\addtocounter{equation}{1}
Because both $L$ and $\Theta$ are local sums, this equation can be solved locally.

For each $C\in\C$, let $\psi_C$ solve the local equation
\begin{equation}\label{eq:mfpoi}
    L_C\psi_C=-2\theta_C,
    \qquad
    \int\psi_C\,d\mubar_C=0.
\end{equation}

\begin{lem}[Local Poisson solutions]\label{lem:mf-local-poisson}
Assume \eqref{eq:onesite} and Assumption~\ref{as:mfstruct}. For each $C\in\C$, equation \eqref{eq:mfpoi} admits a solution $\psi_C$ satisfying
\begin{equation}\label{eq:mfpoibd}
    \|\nabla\psi_C\|_\infty+
    \|\nabla^2\psi_C\|_\infty+
    \Lip(\nabla^2\psi_C)
    \le K_P .
\end{equation}
Here $K_P$ is independent of $C$ and $d$. If
\begin{equation}\label{eq:mf-psi-decomp}
    \psi(x):=\sum_{C\in\C}\psi_C(x_C).
\end{equation}
then $L\psi=-2\Theta$ and $\int\psi\,d\mubar=0.$ In particular, $\psi$ is the normalized solution of \eqref{eq:cpoi} and is exactly local.
\end{lem}

\begin{proof}
Fix $C\in\C$ and write $\mubar_C=Z_C^{-1}e^{-V_C(x_C)}dx_C$, where $V_C=\sum_{i\in C}V_i$. The corresponding Stein operator is
\begin{equation}
    A_Cu=\frac12\Delta_Cu-\frac12\nabla V_C\cdot\nabla u
    =-\frac12L_Cu .
\notag
\end{equation}
Thus \eqref{eq:mfpoi} is equivalent to $A_C\psi_C=\theta_C$. Since $\int\theta_C\,d\mubar_C=0$, the Stein factor estimate for strongly log-concave measures \cite[Theorem 2.1]{mackey2016multivariate} gives a solution with bounded first, second, and third derivative seminorms. The constant is uniform in $C$ and $d$ because $|C|\le s_{\C}$ and the one-site potentials satisfy the uniform bounds \eqref{eq:onesite}. Subtracting a constant gives the normalization $\int\psi_Cd\mubar_C=0$.

Finally, by product structure and the definition of $L_C$, the sum $\psi=\sum_C\psi_C$ satisfies
\begin{equation}
    L\psi=\sum_{C\in\C}L_C\psi_C=-2\sum_{C\in\C}\theta_C=-2\Theta.
\notag
\end{equation}
The normalization follows from $\int\psi_C\,d\mubar_C=0$ for every $C$. 
\end{proof}

For later use, define the interaction neighborhood of coordinate $i$ by
\begin{equation}\label{eq:mf-Si}
    S_i:=\bigcup_{C\in\C:\ i\in C}C .
\end{equation}
Then $\partial_i\psi$ depends only on $x_{S_i}$, and Assumption~\ref{as:mfstruct} gives $|S_i|\le s_{\C}r_{\C}$.

\begin{lem}[Weighted estimates for local sums]\label{lem:mfdim}
Assume \eqref{eq:onesite} and Assumption~\ref{as:mfstruct}. Then, uniformly for $t\in[0,1]$,
\begin{align}\label{eq:mfext}
    \left|\int\|\nabla\psi\|^2 H_t\,d\mubar\right|
    &\le
    s_{\C}r_{\C}^2M_{\rm pert}
    \int\|\nabla\psi\|^2\,d\mubar, \notag\\
    \int\|\nabla\psi\|^2 H_t^2\,d\mubar
    &\le
    (d s_{\C}r_{\C}^2+s_{\C}^2r_{\C}^4)M_{\rm pert}^2
    \int\|\nabla\psi\|^2\,d\mubar .
\end{align}
\end{lem}

\begin{proof}
Let $A_i:=|\partial_i\psi|^2$. By \eqref{eq:mf-Si}, $A_i$ depends only on $x_{S_i}$ and $|S_i|\le s_{\C}r_{\C}$. If $D\cap S_i=\emptyset$, then by product structure and centering,
\begin{equation}
    \int A_i h_D^t\,d\mubar=0.
\notag
\end{equation}
There are at most $|S_i|r_{\C}\le s_{\C}r_{\C}^2$ sets $D$ intersecting $S_i$. Since $\|h_D^t\|_\infty\le M_{\rm pert}$ and $A_i\ge0$,
\begin{equation}
    \left|\int A_iH_t\,d\mubar\right|
    \le
    s_{\C}r_{\C}^2M_{\rm pert}
    \int A_i\,d\mubar .
\notag
\end{equation}
Summing over $i$ gives the first bound in \eqref{eq:mfext}.

For the second bound, expand
\begin{equation}
    \int A_iH_t^2\,d\mubar
    =
    \sum_{D,E\in\C}
    \int A_i h_D^t h_E^t\,d\mubar .
\notag
\end{equation}
The integral vanishes unless either $D\cap E\neq\emptyset$, or both $D$ and $E$ intersect $S_i$. Indeed, if for instance $D$ is disjoint from both $E$ and $S_i$, the factor $h_D^t$ is independent of the rest of the integrand and has mean zero. The number of ordered pairs $(D,E)$ with $D\cap E\neq\emptyset$ is at most
\begin{equation}
    |\C|\,s_{\C}r_{\C}\le d s_{\C}r_{\C}^2,
\notag
\end{equation}
because $|\C|\le d r_{\C}$ and each $D$ intersects at most $s_{\C}r_{\C}$ sets $E$. The number of ordered pairs for which both $D$ and $E$ intersect $S_i$ is at most $s_{\C}^2r_{\C}^4$. Hence
\begin{equation}
    \int A_iH_t^2\,d\mubar
    \le
    (d s_{\C}r_{\C}^2+s_{\C}^2r_{\C}^4)M_{\rm pert}^2
    \int A_i\,d\mubar .
\notag
\end{equation}
Summing over $i$ gives the second bound in \eqref{eq:mfext}. \end{proof}

The first estimate in Lemma~\ref{lem:mfdim} gives a dimension-free relative error term below, while the second-order relative term contains the explicit factor
$\tau^2(d s_{\C}r_{\C}^2+s_{\C}^2r_{\C}^4)M_{\rm pert}^2$, whose dimension-dependent part is harmless under \eqref{eq:mfeps}.

\begin{lem}[Upper bound for the mean-field perturbation]\label{lem:mfexp}
Assume \eqref{eq:onesite} and Assumption~\ref{as:mfstruct}. Suppose \eqref{eq:mfeps} holds and $|\tau|\|\nabla^2\psi\|_{\infty,{\rm op}}<2$. Then
\begin{align}\label{eq:mfexp}
    W_2^2(\mu_\tau,\nu_\tau)
    \le
    \frac{\tau^2}{4}
    \int\|\nabla\psi\|^2\,d\mubar
    \left(
    1+|\tau|s_{\C}r_{\C}^2M_{\rm pert}
    +2|\tau|^2
    (d s_{\C}r_{\C}^2+s_{\C}^2r_{\C}^4)M_{\rm pert}^2
    \right).
\end{align}
\end{lem}

\begin{proof}
We use the Benamou--Brenier formula \cite{benamou2000computational}. Let
\begin{equation}
    \rho_t^\tau=(1+\tau H_t)\bar\rho,\qquad
    w=-\frac12\nabla\psi,\qquad
    v_t^\tau=\frac{\tau w}{1+\tau H_t}.
\notag
\end{equation}
Since $\nabla\cdot(\bar\rho w)=-\Theta\bar\rho$, the pair $(\rho_t^\tau,v_t^\tau)$ satisfies the continuity equation from $\mu_\tau$ to $\nu_\tau$. Therefore
\begin{equation}
    W_2^2(\mu_\tau,\nu_\tau)
    \le
    \tau^2\int_0^1\int\frac{\|w\|^2}{1+\tau H_t}\,d\mubar\,dt,
    \qquad w=-\frac12\nabla\psi.
\notag
\end{equation}
	Using
	\begin{equation}
	    \frac1{1+\tau H_t}
	    =1-\tau H_t+
	    \frac{\tau^2H_t^2}{1+\tau H_t},
	\notag
	\end{equation}
	together with \eqref{eq:mfeps}, we get
	\begin{equation}
	    W_2^2(\mu_\tau,\nu_\tau)
	    \le
	    \frac{\tau^2}{4}\int\|\nabla\psi\|^2\,d\mubar
	    +\frac{|\tau|^3}{4}
	    \sup_{t\in[0,1]}
	    \left|\int\|\nabla\psi\|^2H_t\,d\mubar\right|
	    +\frac{|\tau|^4}{2}
	    \sup_{t\in[0,1]}\int\|\nabla\psi\|^2H_t^2\,d\mubar .
	\notag
	\end{equation}
	Using \eqref{eq:mfext} gives \eqref{eq:mfexp}.
	\end{proof}

\subsubsection{Local dual certificates and final proof}\label{subsubsec:mf-dual}
We next relate the sparse marginal relaxation \eqref{OTmr-1} to local dual certificates. Let $\I=\{I_a:a\in[K]\}$ be a partition of $[d]$, and let $\G=([K],E(\G))$ be a tree. For $ab\in E(\G)$, write
\begin{equation}
    I_{ab}:=I_a\cup I_b,
    \qquad
    z_a:=(x_{I_a},y_{I_a}),
    \qquad
    z_{ab}:=(x_{I_{ab}},y_{I_{ab}}).
\notag
\end{equation}
Set
\begin{equation}
    c_a(z_a):=\sum_{i\in I_a}|x_i-y_i|^2,
    \qquad
    \sum_{a\in[K]}c_a(z_a)=\|x-y\|^2 .
\notag
\end{equation}
The sparse marginal relaxation can be written as
\begin{align}\label{eq:otmr1-tree-local}
    \operatorname{MR}^{(1)}_{\I,\G}(\mu_\tau,\nu_\tau)
    :=
    \inf_{(\pi_a,\pi_{ab})}
    &\quad
    \sum_{a\in[K]}\int c_a\,d\pi_a  \\
    {\rm s.t.}
    &\quad
    {\rm P}_{x_{I_a}}\pi_a={\rm P}_{x_{I_a}}\mu_\tau,
    \quad
    {\rm P}_{y_{I_a}}\pi_a={\rm P}_{y_{I_a}}\nu_\tau,
    \qquad a\in[K], \notag \\
    &\quad
    {\rm P}_{x_{I_{ab}}}\pi_{ab}={\rm P}_{x_{I_{ab}}}\mu_\tau,
    \quad
    {\rm P}_{y_{I_{ab}}}\pi_{ab}={\rm P}_{y_{I_{ab}}}\nu_\tau,
    \qquad ab\in E(\G), \notag\\
    &\quad
    {\rm P}_{z_a}\pi_{ab}=\pi_a,
    \quad
    {\rm P}_{z_b}\pi_{ab}=\pi_b,
    \qquad ab\in E(\G), \notag\\
    &\quad
    \pi_a\in\mathcal P(\R^{I_a}\times\R^{I_a}),
    \quad
    \pi_{ab}\in\mathcal P(\R^{I_{ab}}\times\R^{I_{ab}}). \notag
\end{align}
We also use the following auxiliary global formulation:
\begin{align}\label{eq:tree-cluster-MR}
    \widehat{\operatorname{MR}}^{(1)}_{\I,\G}(\mu_\tau,\nu_\tau)
    :=
    \inf_{\pi}
    &\quad
    \int\|x-y\|^2\,d\pi(x,y) \\
    {\rm s.t.}
    &\quad
    {\rm P}_{x_{I_a}}\pi={\rm P}_{x_{I_a}}\mu_\tau,
    \quad
    {\rm P}_{y_{I_a}}\pi={\rm P}_{y_{I_a}}\nu_\tau,
    \qquad a\in[K], \notag \\
    &\quad
    {\rm P}_{x_{I_{ab}}}\pi={\rm P}_{x_{I_{ab}}}\mu_\tau,
    \quad
    {\rm P}_{y_{I_{ab}}}\pi={\rm P}_{y_{I_{ab}}}\nu_\tau,
    \qquad ab\in E(\G), \notag \\
    &\quad
    \pi\in\mathcal P(\R^d\times\R^d). \notag
\end{align}
The above problem \eqref{eq:tree-cluster-MR} is not one of the marginal relaxations introduced in Section~\ref{Sec:cvxrl}: it keeps a single global measure $\pi$ on $(x,y)$, while prescribing only selected local $x$- and $y$-marginals. We introduce it only as an intermediate problem. When $\G$ is a tree, it is equivalent to the sparse marginal relaxation \eqref{eq:otmr1-tree-local}, as shown next.

\begin{lem}[Tree gluing for \eqref{OTmr-1}]\label{lem:tree-otmr1-gluing}
If $\G$ is a tree, then
\begin{equation}
    \operatorname{MR}^{(1)}_{\I,\G}(\mu_\tau,\nu_\tau)
    =
    \widehat{\operatorname{MR}}^{(1)}_{\I,\G}(\mu_\tau,\nu_\tau).
\notag
\end{equation}
\end{lem}

\begin{proof}
Any feasible $\pi$ in \eqref{eq:tree-cluster-MR} gives feasible local marginals $({\rm P}_{z_a}\pi,{\rm P}_{z_{ab}}\pi)$ in \eqref{eq:otmr1-tree-local}, with the same cost. Conversely, if $(\pi_a,\pi_{ab})$ is feasible for \eqref{eq:otmr1-tree-local}, repeated application of the gluing lemma \cite[App.~B, Lem.~B.5]{chewi2025statistical} gives a probability measure $\pi$ whose $z_a$- and $z_{ab}$-marginals are $\pi_a$ and $\pi_{ab}$. This $\pi$ is feasible for \eqref{eq:tree-cluster-MR}, again with the same cost. 
\end{proof}

\begin{lem}[Local dual certificate for the tree relaxation]\label{lem:tree-cluster-dual}
Suppose $f,g$ are globally dual feasible for the squared cost:
\begin{equation}
    f(x)+g(y)\le \|x-y\|^2,
    \qquad x,y\in\R^d.
\notag
\end{equation}
Assume that $f$ and $g$ decompose into node and edge terms on $\G$:
\begin{equation}
    f(x)=\sum_{a\in[K]}f_a(x_{I_a})+
    \sum_{ab\in E(\G)}f_{ab}(x_{I_{ab}}),
\notag
\end{equation}
and similarly
\begin{equation}
    g(y)=\sum_{a\in[K]}g_a(y_{I_a})+
    \sum_{ab\in E(\G)}g_{ab}(y_{I_{ab}}).
\notag
\end{equation}
Then
\begin{equation}\label{eq:tree-dual-lower}
    \operatorname{MR}^{(1)}_{\I,\G}(\mu_\tau,\nu_\tau)
    \ge
    \int f\,d\mu_\tau+
    \int g\,d\nu_\tau .
\end{equation}
\end{lem}

\begin{proof}
By Lemma~\ref{lem:tree-otmr1-gluing}, it is enough to use the global formulation \eqref{eq:tree-cluster-MR}. For any feasible $\pi$,
\begin{equation}
    \int\|x-y\|^2d\pi
    \ge
    \int f(x)d\pi+
    \int g(y)d\pi.
\notag
\end{equation}
Because $f$ and $g$ are sums of terms supported on node or edge clusters, the local marginal constraints in \eqref{eq:tree-cluster-MR} imply
\begin{equation}
    \int f(x)d\pi=\int f\,d\mu_\tau,
    \qquad
    \int g(y)d\pi=\int g\,d\nu_\tau.
\notag
\end{equation}
Taking the infimum over $\pi$ gives \eqref{eq:tree-dual-lower}. 
\end{proof}

We now build a dual certificate whose terms are local. Let
\begin{equation}\label{eq:mf-local-f}
    f_\tau(x):=\tau\psi(x),
\end{equation}
and set
	\begin{equation}\label{eq:mf-local-g}
	    g_\tau^{\rm loc}(y)
	    :=
	    -\tau\psi(y)-\frac{\tau^2}{4}\|\nabla\psi(y)\|^2
	    -\frac{|\tau|^3\|\nabla^2\psi\|_{\infty,{\rm op}}}
	    {8(1-|\tau|\|\nabla^2\psi\|_{\infty,{\rm op}}/2)}
	    \|\nabla\psi(y)\|^2.
	\end{equation}
This comes from the local approximation of the $c$-transform of $f_\tau$.
Recall the interaction neighborhoods $S_i$ defined in \eqref{eq:mf-Si}.
We assume that the partition $\I$ and the tree $\G$ contain all supports appearing in $f_\tau$ and $g_\tau^{\rm loc}$:
\begin{equation}\label{eq:mf-support-condition}
    \text{for every } A\in \C\cup\{S_i:i\in[d]\},
    \quad
    A\subset I_a \text{ for some }a,
    \quad\text{or}\quad
    A\subset I_{ab} \text{ for some }ab\in E(\G).
\end{equation}

\begin{lem}[Local corrected dual potential]\label{lem:mf-local-corrected}
Assume \eqref{eq:onesite}, Assumption~\ref{as:mfstruct}, and \eqref{eq:mf-support-condition}. Suppose \eqref{eq:mfeps} holds and $|\tau|\|\nabla^2\psi\|_{\infty,{\rm op}}<2$. Then
\begin{equation}
    f_\tau(x)+g_\tau^{\rm loc}(y)\le \|x-y\|^2,
    \qquad x,y\in\R^d.
\notag
\end{equation}
Moreover, $f_\tau$ and $g_\tau^{\rm loc}$ decompose into node and edge terms on $\G$.
\end{lem}

\pf
For the squared cost, the exact $c$-transform of $f_\tau$ is
\begin{equation}\label{eq:mf-c-transform}
    g_\tau(y):=f_\tau^c(y)
    =
    \inf_{x\in\R^d}
    \{\|x-y\|^2-f_\tau(x)\}
    =
    \inf_{q\in\R^d}
    \{\|q\|^2-\tau\psi(y+q)\}.
\end{equation}
By definition, $(f_\tau,g_\tau)$ is dual feasible. We now lower bound $g_\tau$ by a local expression. For fixed $y$, Taylor's formula gives
\begin{equation}\label{eq:mf-taylor-ct}
    -\tau\psi(y+q)
    \ge
    -\tau\psi(y)-\tau\nabla\psi(y)\cdot q
    -\frac{|\tau|\|\nabla^2\psi\|_{\infty,{\rm op}}}{2}\|q\|^2 .
\end{equation}
Set $\alpha:=|\tau|\|\nabla^2\psi\|_{\infty,{\rm op}}/2$. Combining \eqref{eq:mf-c-transform} and \eqref{eq:mf-taylor-ct}, we obtain
\begin{equation}
    g_\tau(y)
    \ge
    -\tau\psi(y)
    +
    \inf_{q\in\R^d}
    \left\{(1-\alpha)\|q\|^2-\tau\nabla\psi(y)\cdot q\right\}.
\notag
\end{equation}
Since $|\tau|\|\nabla^2\psi\|_{\infty,{\rm op}}<2$, we have $\alpha<1$, and the quadratic minimization gives
\begin{equation}
    \inf_{q\in\R^d}
    \left\{(1-\alpha)\|q\|^2-\tau\nabla\psi(y)\cdot q\right\}
    =
    -\frac{\tau^2}{4(1-\alpha)}\|\nabla\psi(y)\|^2.
\notag
\end{equation}
Therefore,
	\begin{equation}\label{estgmf}
	    g_\tau(y)
	    \ge
	    -\tau\psi(y)
	    -\frac{\tau^2}{4}\|\nabla\psi(y)\|^2
	    -\frac{|\tau|^3\|\nabla^2\psi\|_{\infty,{\rm op}}}
	    {8(1-|\tau|\|\nabla^2\psi\|_{\infty,{\rm op}}/2)}
	    \|\nabla\psi(y)\|^2.
	\end{equation}
	Thus $g_\tau^{\rm loc}\le g_\tau$, and dual feasibility follows from the feasibility of $(f_\tau,g_\tau)$.

It remains to check locality. Since $\psi=\sum_C\psi_C$, the function $f_\tau$ is a sum of terms supported on sets $C\in\C$. Also
	\begin{equation}
	    \|\nabla\psi\|^2=\sum_{i=1}^d |\partial_i\psi|^2,
	\notag
	\end{equation}
	and $|\partial_i\psi|^2$ is supported on $S_i$. By \eqref{eq:mf-support-condition}, every such support is contained in a node or edge cluster. \qed

\noindent{\it Proof of Theorem~\ref{thm:mfloc}.\quad}
The middle inequality in \eqref{eq:mflocbd} holds because every feasible coupling for the full OT problem induces feasible local marginals for \eqref{eq:otmr1-tree-local}. Recall the definitions of $f_\tau$ and $g_\tau^{\rm loc}$ in \eqref{eq:mf-local-f} and \eqref{eq:mf-local-g}. For the lower bound, Lemmas~\ref{lem:mf-local-corrected} and~\ref{lem:tree-cluster-dual} give
\begin{equation}\label{eq:mf-MR-dual-lower}
    \operatorname{MR}^{(1)}_{\I,\G}(\mu_\tau,\nu_\tau)
    \ge
    \int f_\tau\,d\mu_\tau+
    \int g_\tau^{\rm loc}\,d\nu_\tau .
\end{equation}
Using \eqref{eq:mfpert}, \eqref{eq:mftheta}, \eqref{eq:mf-local-f}, \eqref{eq:mf-local-g}, and $\int\psi\,d\mubar=0$, the right-hand side has the exact expansion
	\begin{align}
	    \int f_\tau\,d\mu_\tau+
	    \int g_\tau^{\rm loc}\,d\nu_\tau
	    &=
	    -\tau^2\int\psi\Theta\,d\mubar
	    -\frac{\tau^2}{4}\int\|\nabla\psi\|^2\,d\mubar
	    -\frac{\tau^3}{4}\int\|\nabla\psi\|^2H_\nu\,d\mubar \notag\\
	    &\quad
	    -\frac{|\tau|^3\|\nabla^2\psi\|_{\infty,{\rm op}}}
	    {8(1-|\tau|\|\nabla^2\psi\|_{\infty,{\rm op}}/2)}
	    \int\|\nabla\psi\|^2(1+\tau H_\nu)\,d\mubar .
	    \label{eq:mf-local-dual-expansion}
	\end{align}
The first term is controlled by the Poisson equation. Indeed, since $\psi$ solves $L\psi=-2\Theta$, integration by parts for the weighted operator $L$ gives
\begin{equation}
    \int\|\nabla\psi\|^2\,d\mubar
    =
    \int \psi L\psi\,d\mubar
    =
    -2\int\psi\Theta\,d\mubar .
\notag
\end{equation}
Also, since $H_1=H_\nu$, \eqref{eq:mfext} with $t=1$ gives
\begin{equation}
    \left|\int\|\nabla\psi\|^2H_\nu\,d\mubar\right|
    \le
    s_{\C}r_{\C}^2M_{\rm pert}
    \int\|\nabla\psi\|^2\,d\mubar
\notag
\end{equation}
and therefore
\begin{equation}
    \int\|\nabla\psi\|^2(1+\tau H_\nu)\,d\mubar
    \le
    (1+|\tau|s_{\C}r_{\C}^2M_{\rm pert})
    \int\|\nabla\psi\|^2\,d\mubar .
\notag
\end{equation}
Combining these estimates in \eqref{eq:mf-local-dual-expansion} yields
	\begin{align}
	    \int f_\tau\,d\mu_\tau+
	    \int g_\tau^{\rm loc}\,d\nu_\tau
	    &\ge
	    \frac{\tau^2}{4}
	    \int\|\nabla\psi\|^2\,d\mubar
	    \bigg[
	    1
	    -|\tau|s_{\C}r_{\C}^2M_{\rm pert} \notag\\
	    &\qquad
	    -\frac{|\tau|\|\nabla^2\psi\|_{\infty,{\rm op}}}
	    {2(1-|\tau|\|\nabla^2\psi\|_{\infty,{\rm op}}/2)}
	    (1+|\tau|s_{\C}r_{\C}^2M_{\rm pert})
	    \bigg].
	    \label{eq:mf-local-dual-value}
	\end{align}
Together with \eqref{eq:mf-MR-dual-lower}, this proves the lower bound. The upper bound follows from Lemma~\ref{lem:mfexp}. \qed

}

{
\subsection{Sample complexity of cluster moment relaxation}\label{subsec:samplecomplex}

In this subsection, we study the statistical error $E_{\rm stat}$ of the cluster moment relaxation. Our main result, Theorem~\ref{thm:sdpvalue}, establishes the bound $E_{\rm stat}=\O((d+|E(\G)|)N^{-1/2})$ when the relaxation degree and cluster size are fixed independently of $d$. This result highlights how the sparsity of $\G$ reduces the statistical error of the moment-SOS relaxation. Throughout this subsection, we use the monomial basis up
to degree $n$. Let

\begin{equation}\label{sampledef}
r := \max_{k\in[K]} \dim(z_k),
\qquad
q := \max_{k\in[K]} |\Phi_k| \leq \binom{r+n}{n},
\end{equation}
and define
\begin{equation}\label{sampleMmom}
M_{\rm mom}
:=
\sum_{k\in[K]} |\Phi_k|^2
+
\sum_{ij\in E(\G)} |\Phi_i|\,|\Phi_j|.
\end{equation}
Then $M_{\rm mom}$ is an upper bound, up to a factor of two of the the number of sample-estimated moments used in the cluster moment relaxation.
We will refer to the case where $r$ and $n$ are bounded independently of $d$ and
$|E(\G)|=O(K)$, with $K=O(d)$, as the \emph{sparse fixed-degree regime}. In this regime,
using $q\le \binom{r+n}{n}$, \eqref{sampleMmom} gives
\begin{equation}
M_{\rm mom}
\le
(K+|E(\G)|)q^2
\le
(K+|E(\G)|)\binom{r+n}{n}^2
=O(d).
\notag
\end{equation}
Because the approximation results in Subsections~\ref{Sec:ThmGauss} and~\ref{subsec:mf}
are stated on the whole space $\R^d$, we impose the following uniform Gaussian-tail condition.

\begin{assump}[Uniform Gaussian tails]\label{assump:sampletail}
There exist constants $a_{\rm tail},A_{\rm tail}>0$, independent of $d$, such that for every
coordinate $i\in[d]$ and every $R\geq 0$,
\begin{equation}\label{sampletail}
\mu\bigl(|x_i|\geq R\bigr)
\leq A_{\rm tail}e^{-a_{\rm tail}R^2},
\qquad
\nu\bigl(|y_i|\geq R\bigr)
\leq A_{\rm tail}e^{-a_{\rm tail}R^2}.
\end{equation}
\end{assump}

This assumption is satisfied by the Gaussian model in Subsection~\ref{Sec:ThmGauss}. It is also
satisfied in the local perturbation model of Subsection~\ref{subsec:mf} under smoothness
conditions \eqref{eq:onesite} and Assumption~\ref{as:mfstruct}. The proof below uses
Hoeffding's inequality and a tail-truncation estimate for local monomials; these are
stated as Lemmas~\ref{lem:hoeffding} and~\ref{lem:tailtrunc} in
Appendix~\ref{sec:useful-lemmas}.

We now state the whole-space sample complexity bound.

\begin{prop}[Moment sample complexity]\label{prop:samplecomplex}
Assume Assumption~\ref{assump:sampletail}. Suppose that we observe $N$ i.i.d.\ samples
$x^{(1)},\ldots,x^{(N)}\sim \mu$ and $N$ i.i.d.\ samples
$y^{(1)},\ldots,y^{(N)}\sim \nu$, and form all empirical moments appearing in the cluster
moment relaxation. For $\delta\in(0,1)$, with probability at least $1-\delta$,
\begin{multline}\label{samplemain}
\max\!\left\{
\max_{k\in[K]}\|\widehat{M}_k^x-M_k^x\|_\infty,\,
\max_{ij\in E(\G)}\|\widehat{M}_{ij}^x-M_{ij}^x\|_\infty,\,
\max_{k\in[K]}\|\widehat{M}_k^y-M_k^y\|_\infty,\,
\max_{ij\in E(\G)}\|\widehat{M}_{ij}^y-M_{ij}^y\|_\infty
\right\}
\\
\lesssim
\left[\log\!\left(\frac{8A_{\rm tail}dN}{\delta}\right)\right]^n
\sqrt{
\frac{\log M_{\rm mom}+\log(1/\delta)}{N}
}.
\end{multline}
Here $\|\cdot\|_\infty$ for moment matrices denotes the entrywise maximum norm, and the
hidden constant in \eqref{samplemain} depends only on
$n,r,a_{\rm tail},A_{\rm tail}$.
In particular, in the sparse fixed-degree regime defined after \eqref{sampleMmom},
\eqref{samplemain} gives
\begin{multline}\label{samplemain-sparse}
\max\!\left\{
\max_{k\in[K]}\|\widehat{M}_k^x-M_k^x\|_\infty,\,
\max_{ij\in E(\G)}\|\widehat{M}_{ij}^x-M_{ij}^x\|_\infty,\,
\max_{k\in[K]}\|\widehat{M}_k^y-M_k^y\|_\infty,\,
\max_{ij\in E(\G)}\|\widehat{M}_{ij}^y-M_{ij}^y\|_\infty
\right\}
\\
\lesssim
\left[\log\!\left(\frac{8A_{\rm tail}dN}{\delta}\right)\right]^n
\sqrt{
\frac{\log d+2\log\binom{r+n}{n}+\log(1/\delta)}{N}
}.
\end{multline}
In \eqref{samplemain-sparse}, the hidden constant also absorbs the fixed constants in the
sparse fixed-degree regime.
\end{prop}

The proof of Proposition~\ref{prop:samplecomplex} is given in
Appendix~\ref{sec:proof}. 

\paragraph{A generic SDP form.}
We will use a generic stability estimate for SDP values in the moment
sample-complexity bound. Let $b$ collect the affine constraints whose
right-hand side is estimated from samples. The SDP can be written as
\begin{equation}\label{sdpstab-primal}
p(b):=
\inf_M\Big\{
\langle C,M\rangle:
\A(M)=b,\ \B(M)=0,\ M\succeq 0
\Big\},
\end{equation}
where $\B(M)=0$ denotes consistency constraints and any other affine constraints whose
right-hand side is not estimated from samples. Its dual is
\begin{equation}\label{sdpstab-dual}
\sup_{\lambda,\gamma}\Big\{
\langle b,\lambda\rangle:
C-\A^*(\lambda)-\B^*(\gamma)=S,\ S\succeq 0
\Big\}.
\end{equation}
The SDP perturbation estimate used below is stated and proved as
Lemma~\ref{lem:sdp-perturb} in Appendix~\ref{sec:useful-lemmas}.

We now translate the moment concentration bound into a concentration bound for the value
of the SDP relaxation. Let $b$ collect all prescribed $x$- and $y$-moment constraints,
let $\widehat b$ denote the empirical version, and write the cluster moment SDP in the
generic form \eqref{sdpstab-primal}. The deterministic step is exactly the perturbation
bound in Lemma~\ref{lem:sdp-perturb}.

\begin{theo}[Sample complexity for the SDP value]\label{thm:sdpvalue}
Assume the setting of Proposition~\ref{prop:samplecomplex}. Set
\begin{equation}\label{sdpstab-etaN}
\eta_N:=
\left[\log\!\left(\frac{8A_{\rm tail}dN}{\delta}\right)\right]^n
\sqrt{
\frac{\log M_{\rm mom}+\log(1/\delta)}{N}
}.
\end{equation}
Let $\eta>0$ and suppose that strong duality holds and that the dual optimum is attained for
\eqref{sdpstab-primal}--\eqref{sdpstab-dual}
for every right-hand side $\widetilde b$ satisfying $\|\widetilde b-b\|_\infty\le \eta$.
For such $\widetilde b$, let $\Lambda^\star(\widetilde b)$ be the set of optimal dual
multipliers $\lambda$ associated with the prescribed moment constraint
$\A(M)=\widetilde b$, and assume that
\begin{equation}\label{sdpstab-Rdual}
R_{\rm dual}:=
\sup_{\|\widetilde b-b\|_\infty\le \eta}
\inf_{\lambda\in\Lambda^\star(\widetilde b)}
\|\lambda\|_1
<\infty.
\end{equation}
If $\eta_N$ is sufficiently small compared with $\eta$, where the sufficient factor depends
only on the constants hidden in \eqref{samplemain}, then, with probability at least
$1-\delta$,
\begin{equation}\label{sdpstab-main}
|p(\widehat b)-p(b)|
\lesssim
R_{\rm dual}\eta_N.
\end{equation}
Consequently, up to the dual-sensitivity constant $R_{\rm dual}$, the empirical SDP value
has the same statistical rate as the empirical moment constraints.
More generally, suppose the local dual multipliers are uniformly bounded entrywise,
so that $R_{\rm dual}=O(K+|E(\G)|)$ when the local basis size is fixed. Then
\eqref{sdpstab-main}, \eqref{samplemain}, and \eqref{sampleMmom} yield
\begin{multline}\label{sdpstab-main-graph}
|p(\widehat b)-p(b)|
\lesssim
(K+|E(\G)|)\,
\left[\log\!\left(\frac{8A_{\rm tail}dN}{\delta}\right)\right]^n
\\
\times
\sqrt{\frac{\log(K+|E(\G)|)+2\log\binom{r+n}{n}+\log(1/\delta)}{N}}.
\end{multline}
Thus the statistical error depends explicitly on the number of local blocks and
graph edges. In the sparse fixed-degree regime this reduces to the previous
linear-in-$d$ bound.
\end{theo}

\begin{proof}
By Proposition~\ref{prop:samplecomplex}, with probability at least $1-\delta$,
\begin{equation}\label{sdpstab-pf0}
\|\widehat b-b\|_\infty\lesssim \eta_N.
\end{equation}
On this event, the smallness assumption on $\eta_N$ implies that $\widehat b$ lies in the
neighborhood where strong duality holds, dual optima are attained, and the dual multiplier
bound \eqref{sdpstab-Rdual} applies. Lemma~\ref{lem:sdp-perturb}, with
$b_1=\widehat b$, $b_2=b$, and $R=R_{\rm dual}$, gives
\begin{equation}
|p(\widehat b)-p(b)|
\le
R_{\rm dual}\|\widehat b-b\|_\infty.
\notag
\end{equation}
Combining this with \eqref{sdpstab-pf0} proves \eqref{sdpstab-main}.
\end{proof}

\begin{rem}[Interpreting the graph factor]\label{rem:sdp-relative-W2}
When $R_{\rm dual}=O(K+|E(\G)|)$ and the quadratic OT value satisfies $p(b)=\Theta(d)$,
Theorem~\ref{thm:sdpvalue} gives the relative estimate
\begin{equation}
\frac{|p(\widehat b)-p(b)|}{p(b)}
\lesssim
\frac{K+|E(\G)|}{d}\,\eta_N.
\notag
\end{equation}
Together with \eqref{samplemain}, this gives an explicit dependence on the
edge density of the reference graph. In particular, if $K=O(d)$ and
$|E(\G)|=O(d)$, then $N=O(\epsilon^{-2})$ for relative accuracy $\epsilon$ of
$W_2^2$, up to logarithmic factors of $d$.
\end{rem}

}

\section{Extracting a transport map.}\label{Sec:extmap}
We now explain how to obtain an approximate transport map from the cluster moment relaxation. 
Consider $\X=\Y=\R^d$ and the quadratic cost $c(x,y)=\|x-y\|^2$. The classical dual of \eqref{OT} is
\begin{equation}\label{OTd}
\sup_{f,g} \Bigg\{ 
   \int_\X f(x)\,{\rm d}\mu(x) 
 + \int_\Y g(y)\,{\rm d}\nu(y) 
 : \ \|x-y\|^2 - f(x) - g(y) \geq 0 
 \Bigg\}.
\end{equation}
By Brenier's theorem \cite{brenier1991polar} (see also \cite[Theorem~10.28]{villani2008optimal}), 
if $\mu$ and $\nu$ are absolutely continuous, the optimal transport plan is induced by a transport map
\begin{equation}\label{optmap}
T(x) = x - \tfrac{1}{2}\nabla f(x),
\end{equation}
where $(f,g)$ are the optimal solutions of \eqref{OTd}, which are known as Kantorovich potentials. To connect this with our relaxation, recall that \eqref{OTmom-2} can be written in standard SDP form as
\begin{equation}\label{OTmom-2r}
\min_{M} \Big\{ \langle C,M \rangle :
   \ \A^x(M)=b^x,\ 
   \A^y(M)=b^y,\ 
   \A^{\rm c}(M)=0,\ 
   M\succeq 0 \Big\}.
\end{equation}
Here, $C$ is block diagonal with $[C]_{k,k}=C_k$ for each $k\in[K]$. 
The constraints $\A^x(M)=b^x$ and $\A^y(M)=b^y$ represent the affine OT marginal 
constraints \eqref{cmomsimk} and \eqref{cmomsimij} in the $x$- and $y$-coordinates, respectively. Finally, $\A^{\rm c}(M)=0$ encodes the consistency constraints, 
{requiring certain groups of entries in $M$ to coincide; recall the definition of consistency in Section~\ref{Sec:mom}.} The dual of \eqref{OTmom-2r} is
\begin{equation}\label{OTmom-2d}
\max_{\lambda}\Big\{ 
   \langle b^x,\lambda^x \rangle 
 + \langle b^y,\lambda^y \rangle : \ 
   C - \A^{x*}(\lambda^x) - \A^{y*}(\lambda^y) - \A^{{\rm c}*}(\lambda^{\rm c}) = S,\ 
   S \succeq 0 \Big\},
\end{equation}
where $\lambda^x,\lambda^y,\lambda^{\rm c}$ are the Lagrange multipliers of the affine constraints. We now reformulate \eqref{OTmom-2d} as an approximation of the dual formulation 
\eqref{OTd}. Let $\Phi=[\Phi_1;\Phi_2;\cdots;\Phi_K].$ From \eqref{consmomk}, \eqref{consmomij}, we have
\begin{equation}\label{Sep_14_1}
b^x = \mu\(\A^x(\Phi\Phi^\top)\), 
\qquad 
b^y =  \nu\(\A^y(\Phi\Phi^\top)\).
\end{equation}
Moreover, from \eqref{cdec} and \eqref{eqck}, we have that
\begin{equation}\label{Sep_14_2}
\langle C,\Phi\Phi^\top \rangle 
= \sum_{k\in[K]} \langle C_k,\Phi_k\Phi_k^\top \rangle
= \sum_{k\in[K]} c_k(z_k) 
= c(z) = \|x-y\|^2.
\end{equation}
Finally,
\begin{equation}\label{Sep_14_3}
\langle \A^{x*}(\lambda^x) + \A^{y*}(\lambda^y) + \A^{{\rm c}*}(\lambda^{\rm c}), 
        \Phi\Phi^\top \rangle
= \langle \lambda^x,\A^x(\Phi\Phi^\top)\rangle
+ \langle \lambda^y,\A^y(\Phi\Phi^\top)\rangle,
\end{equation}
where we used the fact that $\A^{\rm c}(\Phi\Phi^\top)=0$. From \eqref{Sep_14_1}–\eqref{Sep_14_3}, the dual problem \eqref{OTmom-2d} can be rewritten as
\begin{align}\label{OTmom-2d1}
\max_{\lambda}\ & \langle \lambda^x,\mu\(\A^x(\Phi\Phi^\top)\)\rangle\,
+ \langle \lambda^y,\nu\(\A^y(\Phi\Phi^\top)\)\rangle\, \\
{\rm s.t.}\ & \|x-y\|^2 
- \langle \lambda^x,\A^x(\Phi\Phi^\top)\rangle
- \langle \lambda^y,\A^y(\Phi\Phi^\top)\rangle
= \langle S,\Phi\Phi^\top\rangle,\quad S\succeq 0. \notag
\end{align}
The term $\{\langle S,\Phi\Phi^\top\rangle : S\succeq 0\}$ is precisely the cone of 
\emph{sum-of-squares (SOS)} polynomials in the basis $\Phi$. 
Defining 
$\hat{f}(x):=\langle \lambda^x,\A^x(\Phi\Phi^\top)\rangle$ and 
$\hat{g}(y):=\langle \lambda^y,\A^y(\Phi\Phi^\top)\rangle$, 
we obtain the equivalent formulation
\begin{equation}\label{OTmom-2d2}
\sup_{\hat{f},\hat{g}}
\Bigg\{ \int_\X \hat{f}(x)\,{\rm d}\mu(x) + \int_\Y \hat{g}(y)\,{\rm d}\nu(y) :
\ \|x-y\|^2 - \hat{f}(x) - \hat{g}(y)\ \text{is SOS} \Bigg\}.
\end{equation}
Comparing \eqref{OTd} and \eqref{OTmom-2d2}, we see that the dual SDP is an SOS relaxation of the OT dual: 
the potentials are restricted to the span of ${\rm R}_x(\Phi\Phi^\top)$ and ${\rm R}_y(\Phi\Phi^\top)$, 
and the nonnegativity constraint is replaced by an SOS certificate. 
From the approximate potential $\hat{f}$, we can construct the transport map
\begin{equation}\label{appmap}
\widehat{T}(x) = x - \tfrac{1}{2}\nabla \hat{f}(x),
\end{equation}
which serves as an approximation of the true map \eqref{optmap}. 

Although we focus here on the simplified cluster moment relaxation \eqref{OTmom-2}, 
the same procedure applies to the full and sparse versions \eqref{OTmom} and \eqref{OTmom-1}.

\section{Preprocessing}\label{Sec:prepro}

{In practice, the coordinates of $\mu$ and $\nu$ may not be aligned a priori, and the cluster structure used in our marginal and cluster moment relaxations may also be unknown. We therefore introduce a data-driven preprocessing stage before solving \eqref{OTmr}--\eqref{OTmr-2} or \eqref{OTmom}--\eqref{OTmom-2}. These steps are heuristic and are intended to improve practical performance. When structural information is available from the application, this preprocessing can be omitted.

\subsection{Coordinate alignment}\label{subsec:prematch}

For continuous distributions, when the coordinates of $\mu$ and $\nu$ are not aligned, we apply a linear transformation to match their first and second moments. Let $m_\mu, \Sigma_\mu$ and $m_\nu, \Sigma_\nu$ denote the empirical means and covariances. We transform samples $x \sim \mu$ via
\begin{equation}\label{eq:pre-linear}
T(x) = m_\nu + \Sigma_\nu^{1/2} \Sigma_\mu^{-1/2} (x - m_\mu),
\end{equation}
so that the transformed distribution $T_\# \mu$ has the same mean and covariance as $\nu$. While this alignment only matches second-order statistics, it provides a simple and effective preprocessing step that reduces mismatch between $\mu$ and $\nu$.

For discrete models such as Ising distributions, the variables already have coordinate-wise meaning, and the appropriate operation is relabeling rather than a linear transformation. In this case, we align coordinates via a permutation that matches empirical second-order structure. Let $\widehat\Sigma_\mu,\widehat\Sigma_\nu\in \S^d$ be the empirical covariance matrices and $\widehat m_\mu,\widehat m_\nu\in \R^d$ be the empirical means. We seek a permutation matrix $P$ solving
\begin{equation}\label{eq:pre-perm}
P^\star \in \arg\min_{P\in \Pi_d}
\big\|P^\top \widehat\Sigma_\mu P-\widehat\Sigma_\nu\big\|_F^2
+ \lambda \big\|P^\top \widehat m_\mu-\widehat m_\nu\big\|_2^2,
\end{equation}
where $\Pi_d$ denotes the set of $d\times d$ permutation matrices and $\lambda\geq 0$ is optional. We then relabel one distribution according to $P^\star$ before constructing the relaxation. This formulation is a quadratic assignment problem and is NP-hard in general \cite{cela2013quadratic}. In practice, approximate solutions can be obtained via local search methods or by relaxing $\Pi_d$ to its convex hull (the Birkhoff polytope).

\subsection{Estimating cluster structure}\label{subsec:precluster}

After alignment, we estimate a dependence graph and use it to define both the clusters in Definition~\ref{itm:first} and the reference graph $\G$ in Definition~\ref{itm:third}. Since our relaxations are most effective when inter-cluster dependence is weak, clustering should reflect statistical dependence rather than geometric proximity.

We construct a weighted graph based on empirical second-order statistics. Let $\widehat\Sigma_\mu$ and $\widehat\Sigma_\nu$ denote the covariance matrices of the aligned variables, and let $\widehat\Theta_\mu \approx \widehat\Sigma_\mu^{-1}$ and $\widehat\Theta_\nu \approx \widehat\Sigma_\nu^{-1}$ be corresponding estimates of the precision matrices. For Gaussian models, zeros in the precision matrix correspond to conditional independence, so sparsity in $\Theta$ naturally encodes the underlying graphical structure \cite{wainwright2008graphical}. We define the edge weights by
\begin{equation}\label{eq:pre-weight}
W_{ij}=\max\big\{|(\widehat\Theta_\mu)_{ij}|,\ |(\widehat\Theta_\nu)_{ij}|\big\}.
\end{equation}
We then connect coordinates $i$ and $j$ whenever $W_{ij}\geq \tau$ for a prescribed threshold $\tau>0$, yielding a coordinate-level interaction graph.

Let $C_1,\ldots,C_K$ denote the connected components (or communities) of this graph. We define the clusters by
\begin{equation}\label{eq:pre-clusters}
x_k=(x_i)_{i\in C_k}, \qquad
y_k=(y_i)_{i\in C_k}, \qquad k\in [K].
\end{equation}
We then define the reference graph $\G$ on $[K]$ by connecting two clusters whenever there exists a sufficiently strong inter-cluster interaction:
\begin{equation}\label{eq:pre-graph}
k\ell \in E(\G)
\quad \Longleftrightarrow \quad
\max_{i\in C_k,\, j\in C_\ell} W_{ij} \geq \tau_{\rm inter}.
\end{equation}

The output of preprocessing is therefore: a common representation of $\mu$ and $\nu$, a partition of the coordinates into clusters, and a sparse graph $\G$ (or its neighborhood enlargement $G^h$) used in the relaxations. In the numerical experiments of Section~\ref{Sec:numer}, the underlying structures are prescribed in advance, so this preprocessing step is not required; nevertheless, the above procedure provides a practical approach when the structure is unknown.}

\section{Numerical Experiments}\label{Sec:numer}

In this section, we present numerical experiments to demonstrate the effectiveness of our convex relaxation approaches for high-dimensional OT. We use \textsc{MOSEK} \cite{aps2019mosek} to solve the SDP problems and LP problems. All algorithms are implemented in {\sc Matlab} R2024a and executed on a MacBook equipped with an M3 Max chip and 64GB of RAM.

\subsection{Toy Gaussian example}\label{subsec:GS}

{In this section, we evaluate our cluster moment relaxation method for \eqref{OT} on Gaussian distributions. Although the $W_2$ distance between Gaussian measures admits a closed-form expression, we use it as a benchmark due to the availability of exact ground truth. This enables a precise assessment of the accuracy of different optimal transport methods.}

We first test the cluster moment relaxation for \eqref{OT} between Gaussian distributions 
$\mathcal{N}(m_1,\Sigma_1)$ and $\mathcal{N}(m_2,\Sigma_2)$ to verify Theorem~\ref{thmexp}. 
The means $m_1,m_2$ are sampled i.i.d.\ from a standard normal distribution, 
and the precision matrices $\Sigma_1^{-1},\Sigma_2^{-1} \in \S^d_{++}$ are chosen to be sparse, 
diagonally dominated matrices with a path sparsity pattern $G$. 
Off-diagonal entries are drawn i.i.d.\ from a standard normal distribution, 
while diagonal entries are set as 
\begin{equation}
\Sigma_{1,ii}^{-1}=0.1+\sum_{k\neq i}|\Sigma_{1,ik}^{-1}|, 
\qquad
\Sigma_{2,ii}^{-1}=0.1+\sum_{k\neq i}|\Sigma_{2,ik}^{-1}|.
\end{equation}
We compute the first and second moments using explicit Gaussian formulas.  
This is generally infeasible for non-Gaussian distributions, but here we use it 
solely to verify Theorem~\ref{thmexp}. We set the tolerance of MOSEK to be $10^{-12}$ to compute a highly accurate SDP solution. 

We set the reference graph $\G=G^h$ (Definition~\ref{itm:third}) for $h\in\N^+$ and solve the relaxation \eqref{GSmom} with $d=100$. 
The exact value ${\rm opt}_{\rm exact}$ is obtained from \eqref{W2GGM} and compared with the lower bound 
${\rm opt}_{\rm SDP}$, using the relative error 
\begin{equation}
\frac{|{\rm opt}_{\rm exact}-{\rm opt}_{\rm SDP}|}{{\rm opt}_{\rm exact}}.
\end{equation}

\begin{figure}[ht]
\centering
\includegraphics[width=0.3\linewidth]{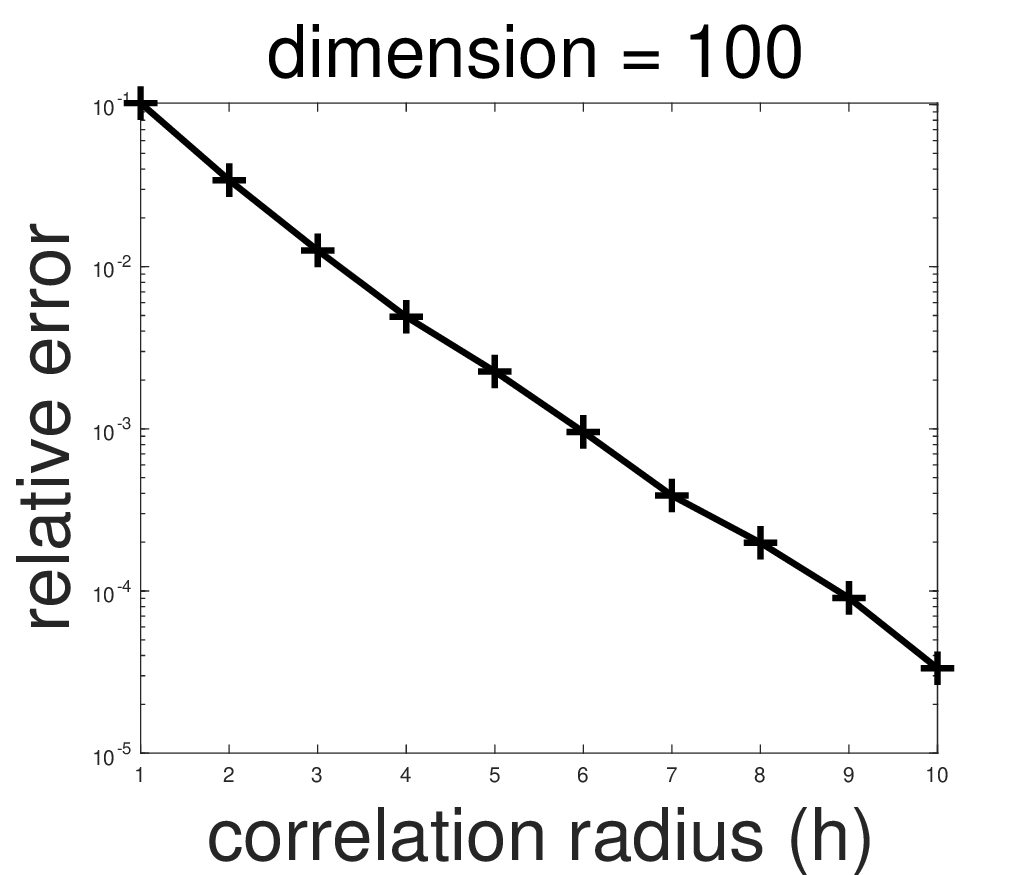}
\caption{\small Cluster moment relaxation \eqref{GSmom} for OT between Gaussian distributions. The correlation sparsity pattern $G$ of Gaussian distributions is a path. The reference graph $\G$ is chosen as $G^h$ for various connectivity radius $h$ (see \eqref{defipower}). } 
\label{fig-GGM}
\end{figure}

Figure~\ref{fig-GGM} shows that the cluster moment relaxation exhibits exponential convergence for sparse Gaussian distributions, consistent with \eqref{thmine} in Theorem~\ref{thmexp}.

{Next, we compare our cluster moment relaxation with the vanilla OT solver based on the sampling approach and with the ICNN method on the same Gaussian benchmark. Specifically, we sample $N$ data points from $\mu$ and $\nu$, replace them with their empirical distributions, and solve the resulting discrete OT problem. This leads to an LP with a decision variable of size $N\times N$. We solve the resulting LP using the Sinkhorn algorithm~\cite{cuturi2013sinkhorn}, a highly efficient method for entropy-regularized optimal transport. Since Sinkhorn computes an entropy-regularized solution, its output differs from that of the unregularized OT problem. To balance accuracy and numerical stability, we set the entropy parameter to $0.01$ times the median of the cost matrix and use a tolerance of $10^{-4}$. These parameters are used throughout all experiments.

For fairness, we also evaluate the moments in our cluster moment relaxation method from the same samples (rather than using Gaussian formulas), and we set the connectivity radius $h=5$ in the reference graph $\G=G^h$ (see \eqref{defipower}) of \eqref{GSmom}. Following Proposition~\ref{propGS}, we apply chordal conversion to transfer the problem \eqref{GSmom} into a multi-block SDP problem with small dimensionality using the {\sc Matlab} code \href{http://www.opt.c.titech.ac.jp/kojima/SparseCoLO/SparseCoLO.htm}{SparseCoLO} \cite{kim2011exploiting}. For the SDP problems, we employ MOSEK with its default tolerance $10^{-7}$.

For the ICNN baseline, we use the Brenier-map formulation implemented in the OT-ICNN code \cite{makkuva2020optimal}. We train a pair of convex neural networks by alternating minimization, using three layers, width $128$ in all Gaussian experiments. 

\begin{figure}[ht]
\centering
\includegraphics[width=\linewidth]{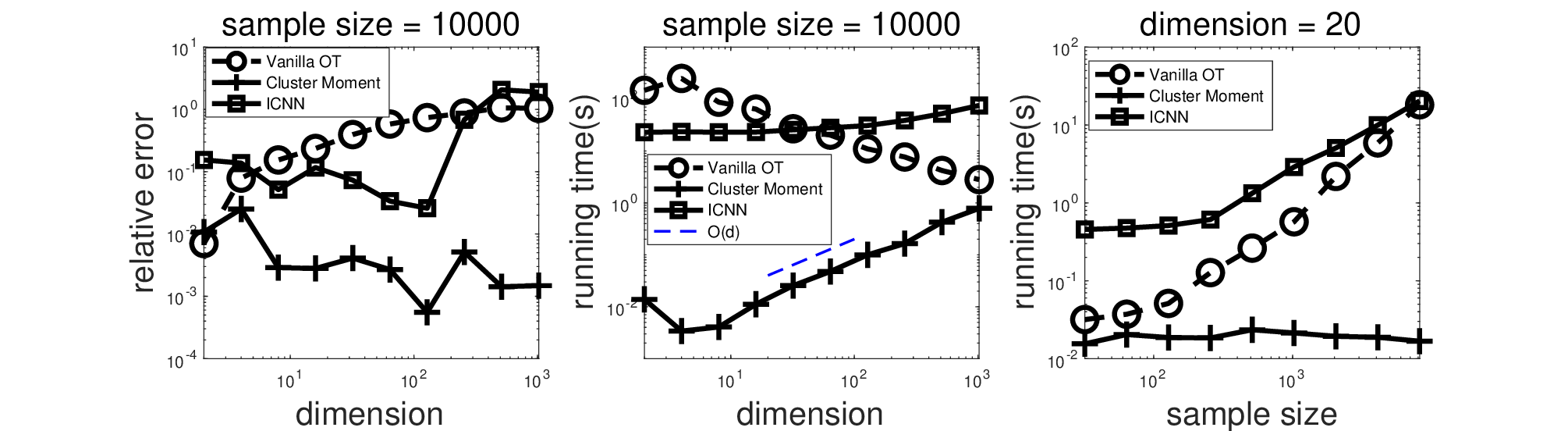}
\caption{\small Comparison of the vanilla OT solver, the cluster moment relaxation \eqref{OTmom-2}, and the ICNN method for \eqref{OT} between Gaussian distributions. The reference graph for \eqref{OTmom-2} is $\G=G^h$ (see \eqref{defipower}), where $G$ is a path graph and the connectivity radius is fixed as $h=5$.}
\label{Fig:GGMS}
\end{figure}

Figure~\ref{Fig:GGMS} illustrates that the cluster moment relaxation achieves both higher accuracy and greater computational efficiency on this Gaussian benchmark.

The first panel shows that the relative error of the vanilla OT solver grows rapidly with the problem dimension, whereas that of the cluster moment relaxation remains around or below $10^{-2}$ throughout the whole sweep. This dimension-stable relative error is consistent with Remark~\ref{rem:sdp-relative-W2}: the plotted quantity is the relative error of the quadratic OT value $W_2^2$, whose scale is $\Theta(d)$ on this additive Gaussian benchmark. The ICNN is more accurate than the vanilla OT solver in several low- and moderate-dimensional cases, but its performance becomes unstable in larger dimensions and is still clearly less reliable than the cluster moment relaxation. This demonstrates that our method exhibits much smaller sample complexity than the vanilla OT solver, which suffers from the curse of dimensionality, while also being more robust than the neural network baseline on this benchmark.

The second panel reports the running times of the three methods for fixed sample size and increasing dimension. The vanilla OT solver becomes faster in higher dimensions because, with fixed $N$, the discrete OT cost matrix becomes more concentrated as $d$ grows, producing a better-conditioned LP on which the Sinkhorn method converges in fewer iterations. Nevertheless, our cluster moment relaxation remains much faster across all tested dimensions. In contrast, the ICNN training time is consistently much larger than the running time of the SDP relaxation.

The third panel shows that, for fixed dimension, the cluster moment relaxation method is substantially faster than both the vanilla OT solver and the ICNN method as the sample size grows. The vanilla OT approach requires solving an entropy-regularized LP with $\Omega(N^2)$ variables and $\Omega(N)$ constraints, and each Sinkhorn iteration costs $\mathcal{O}(N^2)$. In contrast, the SDP~\eqref{GSmom} has a fixed matrix size $2d+1$ and $\mathcal{O}(hd)$ constraints, which do not depend on the sample size. The ICNN method also becomes more expensive as the sample size increases, due to repeated stochastic-gradient updates over the training samples; when $N$ is very small, the effective batch size is reduced accordingly so that the network is still genuinely trained.}

{
\subsection{Product Beta measures}

To further assess the performance of our method beyond the Gaussian setting, we consider \eqref{OT} between two non-Gaussian product measures on $\R^d$. The source distribution $\mu$ is chosen as the product of $d$ identical one-dimensional affine-Beta distributions on $[-2,2]$ with parameters $\mathrm{Beta}(1.4,5.2)$, while the target distribution $\nu$ is chosen as the product of $d$ identical affine-Beta distributions on $[-2,2]$ with parameters $\mathrm{Beta}(5.0,1.8)$. This provides a bounded and genuinely non-Gaussian benchmark for which the exact OT cost is still available.

Recall that the Beta distribution on $[0,1]$ with parameters $\alpha,\beta>0$ has density
\begin{equation}
\rho_{\alpha,\beta}(t)=\frac{1}{B(\alpha,\beta)}\, t^{\alpha-1}(1-t)^{\beta-1}, 
\qquad t\in [0,1],
\end{equation}
where $B(\alpha,\beta)$ is the Beta function. In our experiment, we map this distribution affinely from $[0,1]$ to $[-2,2]$. Thus, if $z\sim \mathrm{Beta}(\alpha,\beta)$, then the corresponding sample on $[-2,2]$ is given by
\begin{equation}
x=-2+4z.
\end{equation}

Since both $\mu$ and $\nu$ are product measures and the cost is quadratic, the exact OT map is coordinatewise and the exact OT cost is the sum of $d$ one-dimensional OT costs. We therefore estimate the ground truth by generating a large number of one-dimensional samples from the source and target marginals, sorting them, and averaging the squared differences. The total reference cost is then obtained by multiplying the resulting one-dimensional cost by $d$.

We compare three methods: the vanilla OT solver, the cluster moment relaxation, and the ICNN approach. For the vanilla OT solver, we draw $N$ samples from $\mu$ and $\nu$, replace them by their empirical distributions, and solve the resulting discrete OT problem using the Sinkhorn algorithm, exactly as in Subsection~\ref{subsec:GS}. We use the same Sinkhorn implementation as before, with entropy parameter equal to $0.01$ times the median of the empirical cost matrix and stopping tolerance $10^{-4}$. For the cluster moment relaxation, we use the mean-field structure with singleton clusters, namely $K=d$ and a reference graph with $E(\G)=\varnothing$, and fix the relaxation degree to be $n=3$. Since the problem is mean-field, this corresponds to choosing cluster size one and no inter-cluster edges. The SDP is solved by {\sc MOSEK} with tolerance $10^{-7}$. 

For the ICNN baseline, we use the Brenier-map formulation implemented in the OT-ICNN code. We train a pair of convex neural networks by alternating minimization, using Adam with learning rate $10^{-4}$, batch size $256$, and inner iteration number $4$ or $6$ depending on the dimension. The ICNNs have three layers throughout, and we use width $128$ in all experiments. 

In the first two panels of Figure~\ref{Fig:BetaProduct}, we fix the sample size at $N=10^4$ and vary the dimension. The dimensions tested are $d=2,4,8,16,32,64,128,256,512.$ The first panel shows the relative error with respect to the exact OT cost. The second panel reports the running time as the dimension increases. In the third panel, we fix the dimension at $d=32$ and vary the sample size $N=512,\,1024,\,2048,\,4096,\,8192,\,10^4.$

\begin{figure}[!htbp]
\centering
\includegraphics[width=0.9\linewidth]{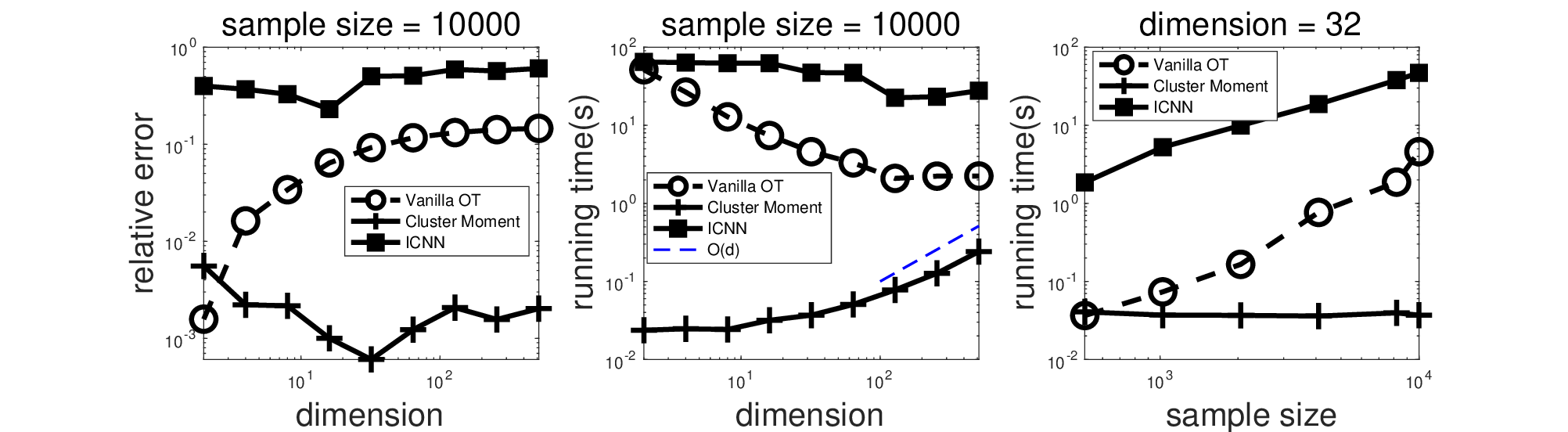}
\caption{\small Comparison of the Sinkhorn-based vanilla OT solver, the cluster moment relaxation, and the ICNN method for \eqref{OT} between two non-Gaussian product measures. The source is the product of $d$ affine-Beta distributions on $[-2,2]$ with parameters $\mathrm{Beta}(1.4,5.2)$, and the target is the product of $d$ affine-Beta distributions on $[-2,2]$ with parameters $\mathrm{Beta}(5.0,1.8)$. For the cluster moment relaxation, we use singleton clusters, reference graph $\G$ to be empty graph, and relaxation degree $n=3$. The vanilla OT solver uses Sinkhorn on the empirical discretization, while the ground truth is computed from the exact one-dimensional decomposition of the product problem.}
\label{Fig:BetaProduct}
\end{figure}

Figure~\ref{Fig:BetaProduct} shows that the cluster moment relaxation remains highly accurate on this non-Gaussian benchmark. Its relative error stays around $10^{-3}$ throughout the entire dimension sweep, even though the distributions are no longer Gaussian. This is the product-measure analogue of Remark~\ref{rem:sdp-relative-W2}: the exact quadratic OT cost is additive across coordinates and therefore grows like $\Theta(d)$, so a fixed level of moment-estimation accuracy leads to a nearly dimension-independent relative error. In contrast, the Sinkhorn-based vanilla OT solver becomes increasingly inaccurate as the dimension grows, with the relative error rising from $1.6\times 10^{-3}$ at $d=2$ to about $1.5\times 10^{-1}$ at $d=512$. The ICNN is still substantially less accurate than the cluster moment relaxation on this benchmark. This suggests that, for this mean-field product benchmark, the convex relaxation captures the structure of the problem much more effectively than the neural network baseline.

The second panel shows that the running time of the cluster moment relaxation scales nearly linearly with the dimension and remains well below one second up to $d=512$. The vanilla OT solver based on Sinkhorn is substantially slower for small and moderate dimensions because it solves a large empirical OT problem with an $N\times N$ transport matrix. The third panel shows the dependence on the sample size for fixed dimension $d=32$. The cluster moment relaxation is essentially insensitive to the sample size, since the SDP size is determined by the relaxation structure rather than by $N$, whereas the Sinkhorn baseline becomes increasingly expensive as the empirical OT problem grows. The ICNN method is again the most time-consuming due to neural network training.

These results complement the Gaussian experiments in Subsection~\ref{subsec:GS}. They show that the strong empirical performance of the cluster moment relaxation is not limited to the Gaussian setting: even for a non-Gaussian product benchmark, the method remains highly accurate and computationally efficient, while the Sinkhorn-based vanilla OT solver deteriorates as the dimension grows.

}

\FloatBarrier

\subsection{Ising Model}

We next test the marginal relaxation for \eqref{OT} between Ising models:
\begin{align}\label{Ising}
&\mu \sim \exp\!\left[ \beta_1\!\left( J_1\sum_{ij\in E(G)}u_iu_j+h_1\sum_{k\in [d]}u_k \right) \right],\notag \\
&\nu \sim \exp\!\left[ \beta_2\!\left( J_2\sum_{ij\in E(G)}v_iv_j+h_2\sum_{k\in [d]}v_k \right) \right],
\end{align}
where $u_k,v_k \in \{-1,1\}$ and $G$ is a path graph.  
For any $\omega\in [d]$, we let $K=\lceil d/\omega\rceil$ and define clusters
\begin{equation}\label{defiomegx}
x_k=\begin{cases}
\{u_{(k-1)\omega+1},\ldots,u_{k\omega}\} & k<K \\
\{u_{(k-1)\omega+1},\ldots,u_{d}\} & k=K
\end{cases}
\end{equation}
\begin{equation}\label{defiomegy}
y_k=\begin{cases}
\{v_{(k-1)\omega+1},\ldots,v_{k\omega}\} & k<K \\
\{v_{(k-1)\omega+1},\ldots,v_{d}\} & k=K
\end{cases}
\end{equation}
We apply the sparse marginal relaxation \eqref{OTmr-1} with the reference graph $\G$ (Definition~\ref{itm:third}) chosen as a path graph 
on $[K]$, i.e., edges $\{i(i+1): i\in [K-1]\}$.  
We set $d=12$, so that the full density vectors $\mu,\nu$ (each with $2^{12}$ entries) 
can be stored explicitly and the exact OT cost computed.  
The marginals in \eqref{marmar} are computed exactly from the distributions $\mu$ and $\nu$. We set the tolerance of MOSEK for solving the LP problem \eqref{OTmr-1} as $10^{-8}.$ 

\begin{center}
\footnotesize
\renewcommand{\arraystretch}{0.92}
\captionof{table}{Marginal relaxation \eqref{OTmr-1} for \eqref{OT} between 1D Ising models. $\omega$ is the size of clusters defined in \eqref{defiomegx} and \eqref{defiomegy}. The reference graph $\G$ (Definition~\ref{itm:third}) is a path graph.}
\label{Test-Ising}
\begin{tabular}{|c|c|c|c|c|}
\hline
$(J_\mu,h_\mu,\beta_\mu)\to(J_\nu,h_\nu,\beta_\nu)$ & Method & OT cost & Relative error & time(s)  \\
\hline
$(1,0.2,0.6)\to(-1,0.2,0.6)$ & $\omega=1$  & 1.3218923e+01 &  0 & 3.90e-03   \\
& $\omega=2$ & 1.3218923e+01 & 0 & 8.38e-03  \\
& $\omega=3$ & 1.3218923e+01 & 0 &  7.03e-02 \\
& $\omega=4$ & 1.3218923e+01 & 0 & 6.09e-01  \\
& exact & 1.3218923e+01 & 0  & 5.41e+01\\
\hline
$(1,0.2,0.6)\to(2,0.2,0.44)$ & $\omega=1$  & 1.9077413e+00 &  3.20e-1 & 4.06e-03  \\
& $\omega=2$ & 2.5413490e+00 & 9.41e-2  & 1.05e-02\\
& $\omega=3$ & 2.6937730e+00 & 3.98e-2 & 1.01e-01\\
& $\omega=4$ & 2.7297483e+00 & 2.70e-2  &7.59e-01 \\
& exact & 2.8054410e+00 & 0 & 7.50e+01  \\
\hline
$(1,0.2,0.6)\to(1,0.2,0.2)$ & $\omega=1$  & 6.5223360e+00 &  6.20e-2 & 6.49e-03 \\
& $\omega=2$ & 6.9073375e+00 & 6.60e-3 & 1.02e-02\\
& $\omega=3$ & 6.9073375e+00 & 6.60e-3 &7.95e-02 \\
& $\omega=4$ & 6.9443953e+00 & 1.30e-3 &5.61e-01 \\
& exact & 6.9535336e+00 & 0 & 6.01e+01\\
\hline
\end{tabular}
\end{center}

Table~\ref{Test-Ising} shows that the lower bound obtained from the marginal relaxation converges to the exact OT cost as $\omega$ increases.  
In the first instance, the relaxation is already tight at $\omega=1$, demonstrating the effectiveness of the proposed approach even with the smallest clusters.  
Moreover, the running time for solving the LP arising from the marginal relaxation is several orders of magnitude smaller than that of the original problem~\eqref{OT}.  
This improvement stems from the fact that our marginal relaxation substantially reduces the number of variables and constraints in the LP formulation.

{Next, we compare our marginal relaxation with the vanilla OT solver and ICNN. To construct a nontrivial Ising benchmark with exact ground truth computable in high dimensions, we consider a block-product Ising model. The coordinates are partitioned into consecutive blocks of size $8$, and on each block we place a 1D path Ising model. The source and target distributions are products over these blocks, while the Ising parameters vary from block to block by cycling through the three parameter pairs in Table~\ref{Test-Ising}:
\begin{equation}
(1,0.2,0.6)\to (-1,0.2,0.6), \qquad
(1,0.2,0.6)\to (2,0.2,0.44), \qquad
(1,0.2,0.6)\to (1,0.2,0.2).
\notag
\end{equation}
Thus, each block remains a genuine interacting path Ising model, but the global law factorizes across blocks. Since the quadratic cost is additive across coordinates, the exact OT cost is the sum of exact blockwise OT costs. Each block OT problem involves at most $2^8$ states and can therefore be solved exactly by the same discrete LP formulation.

For the dimension sweep, we fix the sample size as $N=10^4$ and test $d=8,16,32,64,128,256,512.$ For the sample-size sweep, we fix $d=32$ and vary $N=512,1024,2048,4096,8192,10^4.$ The vanilla OT baseline is the Sinkhorn solver applied to the empirical discretization. For the marginal relaxation, we use path reference graph and cluster size $\omega=2$. For the ICNN baseline, we use three layers, width $128$.

\begin{figure}[ht]
\centering
\includegraphics[width=\linewidth]{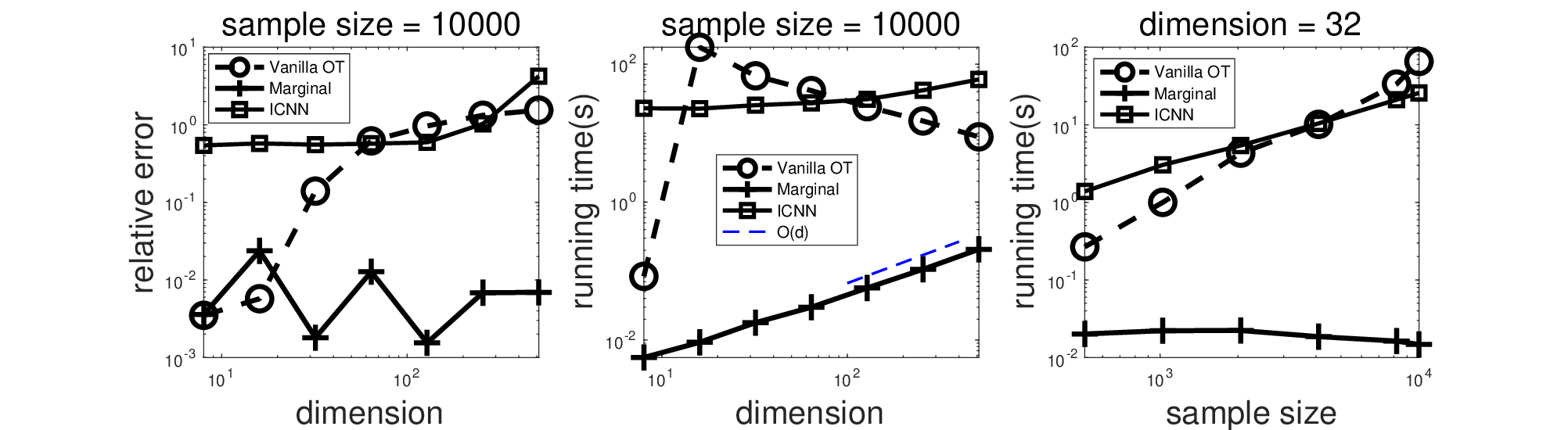}
\caption{\small Comparison of the Sinkhorn-based vanilla OT solver, the marginal relaxation, and the ICNN method for OT between block-product Ising models. Each block is a path Ising model on $8$ spins, and the block parameters cycle through the three exact Ising examples in Table~\ref{Test-Ising}. For the marginal relaxation, we use cluster size $\omega=2$ and a path reference graph on the clusters.}
\label{Fig:block-ising}
\end{figure}

Figure~\ref{Fig:block-ising} shows that the marginal relaxation remains highly accurate on this benchmark. In the dimension sweep, its relative error stays between about $1.5\times 10^{-3}$ and $1.3\times 10^{-2}$ across the entire range from $d=32$ to $d=512$, while the Sinkhorn baseline deteriorates rapidly and exceeds relative error $1$ for sufficiently large dimensions. The ICNN performs substantially worse than the marginal relaxation on every tested dimension, with relative error around $5\times 10^{-1}$ already at $d=8$ and growing further in the largest cases.

The running-time comparison also favors the marginal relaxation. In the dimension sweep, the Sinkhorn solver becomes cheaper as the dimension grows because the sample size is fixed, but it still remains significantly slower than the marginal relaxation over the whole range. The ICNN training time grows with the dimension and is much larger than the marginal relaxation time. In the sample-size sweep at $d=32$, the marginal relaxation remains nearly constant, around a few hundredths of a second, while the Sinkhorn solver grows quickly with the sample size and the ICNN becomes increasingly expensive due to repeated stochastic-gradient updates.

These results underscore the practical advantages of the marginal relaxation over vanilla OT solver.}

\subsection{Ginzburg--Landau Model}

We next test our cluster moment relaxation for the generative modeling task associated with the one-dimensional Ginzburg--Landau model
\begin{equation}\label{GLD}
\nu \sim \exp\Bigg[-\beta\Bigg(\sum_{i=1}^{d+1}\frac{\lambda}{2}\Big(\frac{y_i-y_{i-1}}{h}\Big)^2
+ \sum_{i=1}^{d}\frac{1}{4\lambda}(1-y_i^2)^2\Bigg)\Bigg], \quad y\in [-L,L]^d,
\end{equation}
with boundary conditions $y_0=y_{d+1}=0$, step size $h=1/(1+d)$, and $L=2.5$.

We first generate $N=10^4$ training samples from $\nu$ by tensor-train conditional sampling \cite{dolgov2020approximation,peng2024tensor}. From these samples we compute the mean $m$ and covariance $\Sigma$, define a Gaussian source distribution $\mu\sim\mathcal{N}(m,\Sigma)$, and generate another $N=10^4$ training samples from $\mu$. As before, we use these samples to build the transport map extracted from the cluster moment relaxation \eqref{OTmom-2}, with $K=d$, relaxation degree $n=10$, and a path graph as the reference graph $\G$ (Definition~\ref{itm:third}). After chordal conversion, the resulting multi-block SDP is solved by {\sc MOSEK} with tolerance $10^{-7}$.

{We compare our method with three baselines. The first is the normalizing-flow model \cite{papamakarios2017masked}, using five transforms, each implemented by a three-layer neural network of width~128. The second is an ICNN baseline trained on the same data with three layers, width $128$. The third is a Sinkhorn transport map: we solve the entropy-regularized OT problem on the empirical training clouds by Sinkhorn \cite{cuturi2013sinkhorn}, take the barycentric projection of the resulting transport plan on the source training samples, and extend it to new samples by $8$-nearest-neighbor regression.

Each method is then applied to another $10^5$ reference samples from $\mu$. We compare the resulting mapped samples with an independent reference set of $10^5$ samples from $\nu$ through the pairwise marginals $(y_1,y_2)$ and $(y_1,y_5)$.

We consider two parameter settings. The first is $d=10$, $\beta=1/8$, and $\lambda=0.03$; the second is $d=50$, $\beta=1/20$, and $\lambda=0.01$. Figures~\ref{Fig:GLD_1_2} and~\ref{Fig:GLD_1_20} show that the cluster moment relaxation gives the best visual match to the target marginals in both cases. The normalizing-flow baseline remains reasonably accurate but is more diffuse. The ICNN and the Sinkhorn transport map are substantially smoother, and this effect is especially pronounced in the $50$-dimensional example. These results demonstrate the effectiveness of the proposed cluster moment relaxation for generative modeling.

\begin{figure}[ht]
\centering
\includegraphics[width=\linewidth,height=0.4\linewidth]{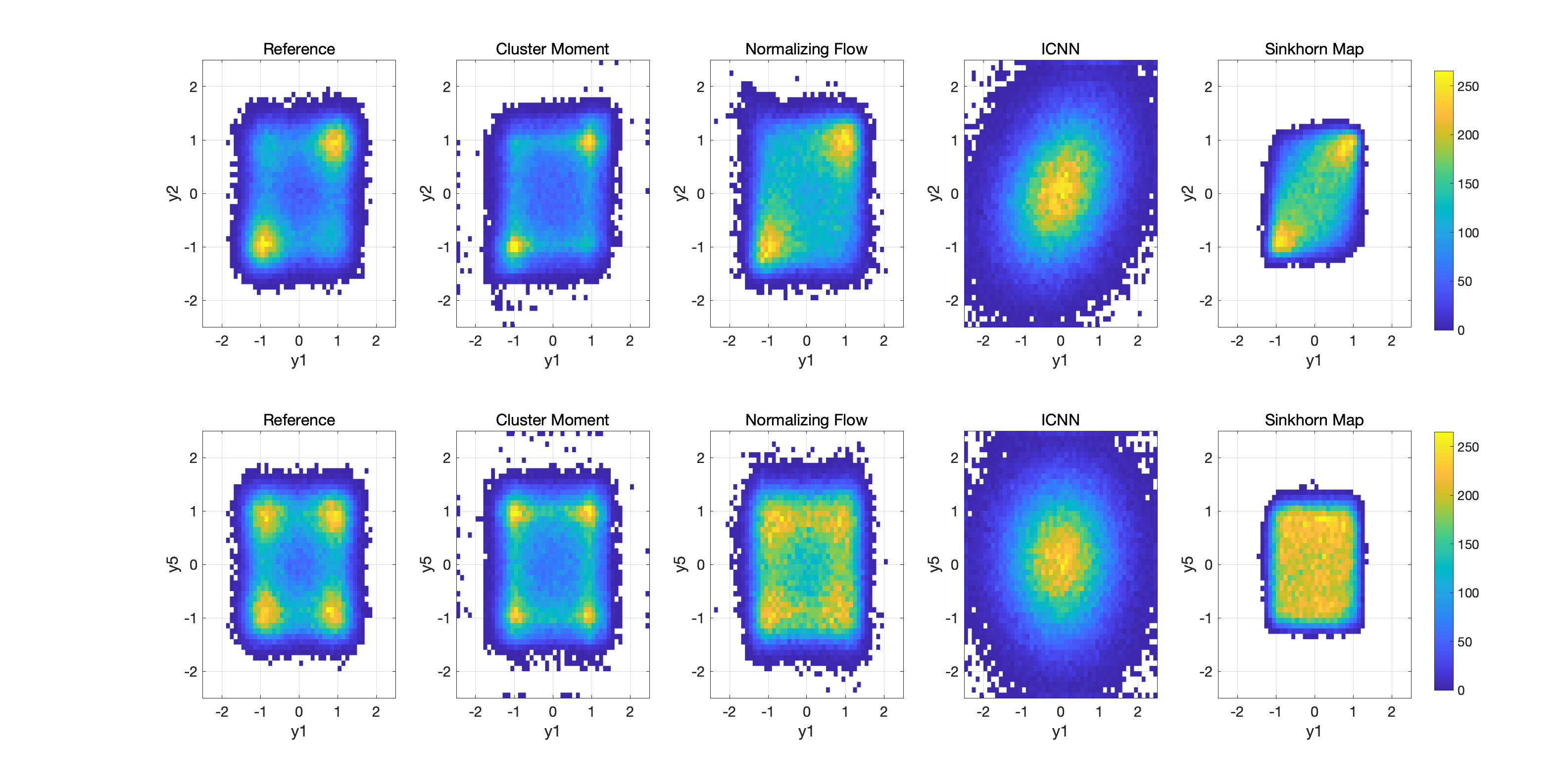}
\caption{\small Comparison of the cluster moment relaxation \eqref{OTmom-2}, the normalizing-flow baseline, the ICNN baseline, and the Sinkhorn transport map for the one-dimensional Ginzburg--Landau model with $d=10$, $\beta=1/8$, $L=2.5$, and $\lambda=0.03$. The displayed marginals are $(y_1,y_2)$ and $(y_1,y_5)$. For the cluster moment relaxation, we set $K=10$, $n=10$, and use a path graph as the reference graph $\G$ (Definition~\ref{itm:third}).}
\label{Fig:GLD_1_2}
\end{figure}

\begin{figure}[ht]
\centering
\includegraphics[width=\linewidth,height=0.4\linewidth]{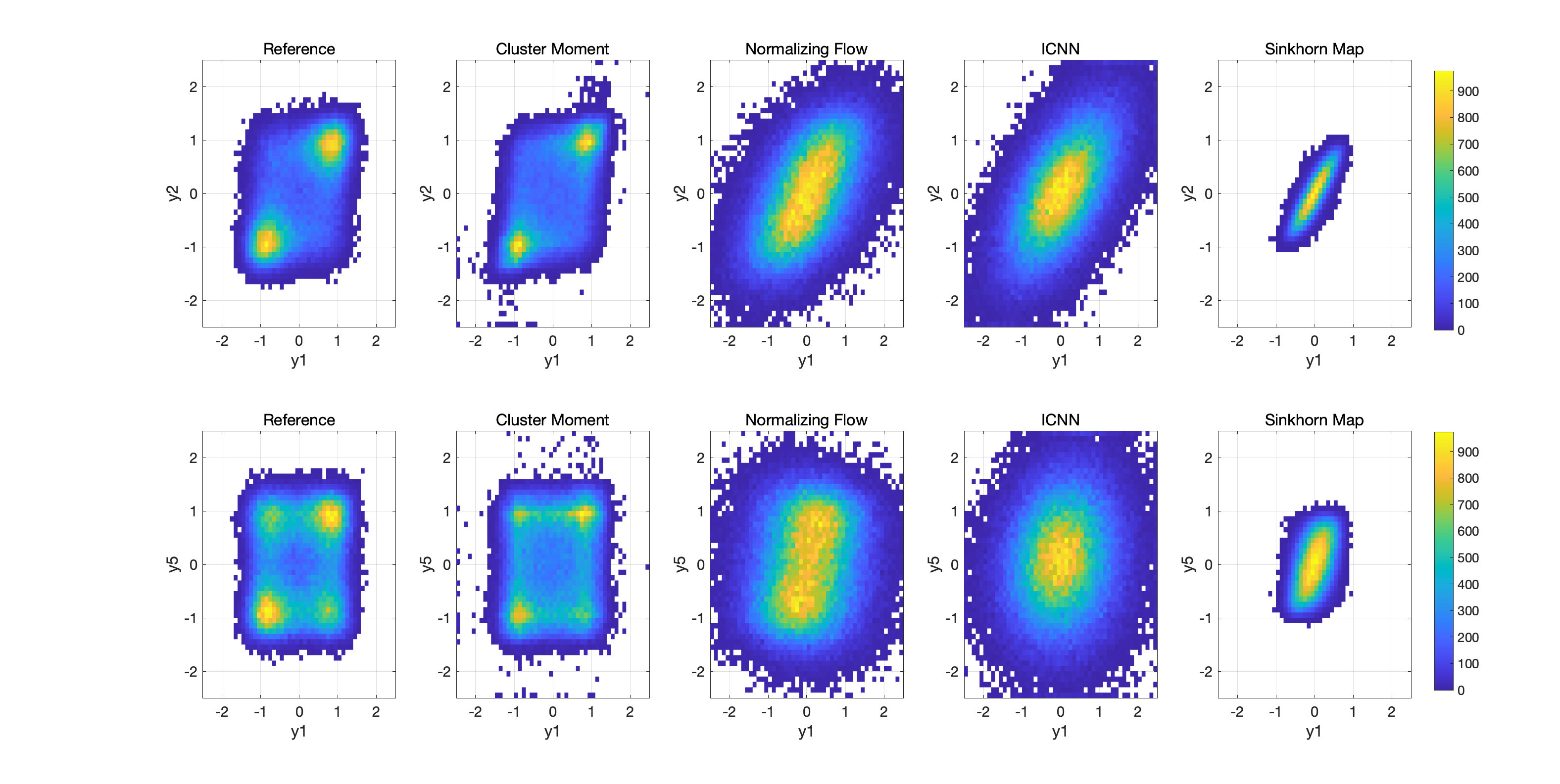}
\caption{\small Comparison of the cluster moment relaxation \eqref{OTmom-2}, the normalizing-flow baseline, the ICNN baseline, and the Sinkhorn transport map for the one-dimensional Ginzburg--Landau model with $d=50$, $\beta=1/20$, $L=2.5$, and $\lambda=0.01$. The displayed marginals are $(y_1,y_2)$ and $(y_1,y_5)$. For the cluster moment relaxation, we set $K=50$, $n=10$, and use a path graph as the reference graph $\G$ (Definition~\ref{itm:third}).}
\label{Fig:GLD_1_20}
\end{figure}

We complement the qualitative marginal plots with a quantitative comparison based on the sliced Wasserstein distance between the transported Gaussian test samples and the target test samples. In each case, we use the same training and test sets as in the generative modeling experiments, and we evaluate the sliced Wasserstein distance using 500 random one-dimensional projections and 5000 samples from each distribution. Smaller values indicate a more accurate transport map.

\begin{table}[!htbp]
\centering
\begingroup
\footnotesize
\setlength{\tabcolsep}{3.6pt}
\renewcommand{\arraystretch}{0.9}
\begin{tabular}{@{}ccccccc@{}}
\hline
$d$ & $\beta$ & $\lambda$ & Cluster moment & Normalizing flow & ICNN & Sinkhorn map\\
\hline
10 & $1/12$ & 0.02 & 0.0274 & 0.1446 & 0.0878 & 0.2284\\
10 & $1/8$ & 0.03 & 0.0342 & 0.1298 & 0.0895 & 0.2083\\
20 & $1/12$ & 0.02 & 0.0297 & 0.0639 & 0.0578 & 0.3205\\
20 & $1/10$ & 0.025 & 0.0270 & 0.0418 & 0.0688 & 0.2738\\
30 & $1/16$ & 0.015 & 0.0278 & 0.0559 & 0.0617 & 0.3725\\
30 & $1/12$ & 0.02 & 0.0339 & 0.0411 & 0.0717 & 0.2799\\
40 & $1/20$ & 0.01 & 0.0311 & 0.0590 & 0.0720 & 0.4326\\
40 & $1/16$ & 0.015 & 0.0298 & 0.0452 & 0.0745 & 0.3279\\
50 & $1/20$ & 0.01 & 0.0289 & 0.0597 & 0.0842 & 0.3987\\
50 & $1/16$ & 0.015 & 0.0311 & 0.0621 & 0.0908 & 0.2818\\
\hline
\end{tabular}
\endgroup
\caption{Sliced Wasserstein distance between the pushforward of the Gaussian test samples and the target samples for ten Gaussian-to-1D-GLD instances.}
\label{tab:GLD-sliced-W2}
\end{table}

\FloatBarrier

The results in Table~\ref{tab:GLD-sliced-W2} are consistent across all ten examples. The cluster moment relaxation attains the smallest sliced Wasserstein distance in every case. The normalizing flow baseline is usually the second best, while ICNN remains competitive on coarse global metrics but is consistently less accurate than the cluster moment relaxation. The Sinkhorn transport map, implemented by barycentric projection of the entropic plan followed by a $k$-nearest-neighbor extension, is the weakest baseline in this study and deteriorates more noticeably as the dimension grows. These quantitative results support the visual comparisons in Figure~\ref{Fig:GLD_1_2}--Figure~\ref{Fig:GLD_1_20} and show that the superior accuracy of the cluster moment relaxation is not limited to a single instance.}

\FloatBarrier

{
\subsection{MNIST digits}

Finally, we include a small real-data generative modeling experiment on MNIST digits.  For each digit class, we compute a six-dimensional PCA representation from the training images and consider the transport from a standard Gaussian source $\mu=\mathcal{N}(0,I_6)$ to the empirical distribution of the PCA coefficients of that digit.  We use $2000$ target training samples, draw the same number of source samples from $\mathcal{N}(0,I_6)$, and extract a transport map from the cluster moment relaxation \eqref{OTmom-2}.  The relaxation degree is set to $n=5,$ and the clusters are selected  with maximal cluster size two.

Figure~\ref{Fig:MNIST-Gauss} compares real test images with samples generated by decoding the transported PCA coefficients.  The learned map produces recognizable samples for several digit classes.  

\begin{figure}[ht]
\centering
\includegraphics[width=0.85\linewidth]{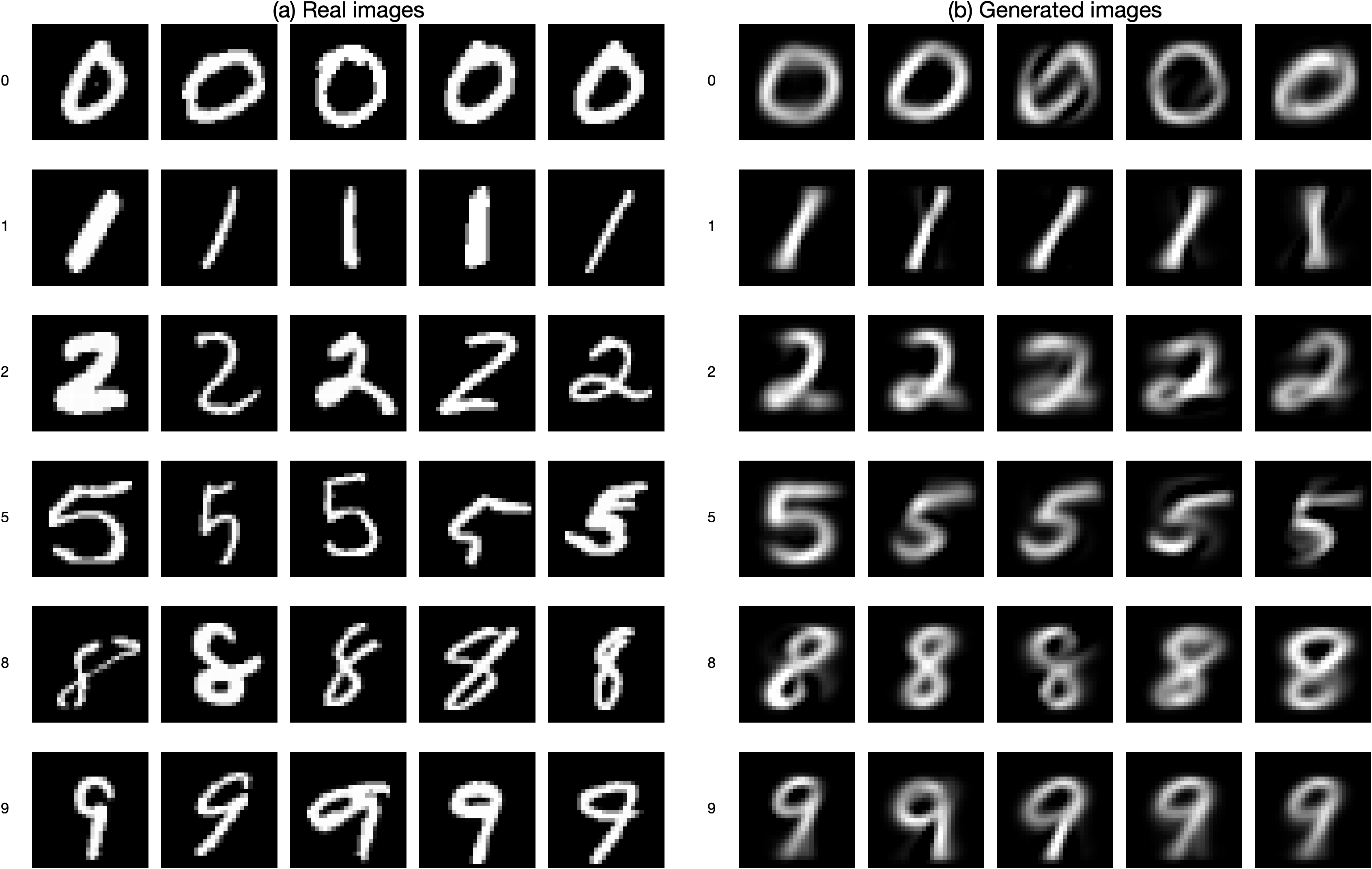}
\caption{\small Gaussian-to-MNIST generative modeling in a six-dimensional PCA latent space.  For each digit, the source is $\mathcal{N}(0,I_6)$ and the target is the empirical distribution of PCA coefficients of the corresponding MNIST digit.  The transport map is extracted from the cluster moment relaxation \eqref{OTmom-2} with relaxation degree $n=5.$}
\label{Fig:MNIST-Gauss}
\end{figure}

}

\section{Conclusion}\label{Sec:conc}

In this paper, we proposed convex relaxation approaches for addressing the high dimensionality of optimal transport. By introducing marginal and cluster moment relaxations, we obtain 
tractable convex programs that provide computable lower bounds and enable the extraction 
of transport maps. Our theoretical analysis gives approximation and sample error bounds
for sparse Gaussian models, and also gives an approximation bound for local perturbations
of a mean-field product measure. Numerical experiments demonstrate that the 
approach extends effectively to non-Gaussian distributions. Furthermore, we illustrated 
the potential of transport maps derived from these relaxations as alternatives to neural 
networks in generative modeling. These results highlight convex relaxation as a 
promising dimension reduction framework for scaling OT to high-dimensional problems.

\appendix

\section{Useful Lemmas}\label{sec:useful-lemmas}

{
We collect the probabilistic and deterministic auxiliary estimates used in
Section~\ref{Sec:analysis}.

\begin{lem}[Hoeffding inequality]\label{lem:hoeffding}
Let $\xi_1,\ldots,\xi_N$ be independent random variables satisfying
$a\le \xi_\ell \le b$ almost surely for all $\ell\in[N]$. Then for any $t>0$,
\begin{equation}\label{hoeffding}
\mathbb{P}\!\left(
\left|
\frac1N\sum_{\ell=1}^N \xi_\ell
-
\mathbb{E}\!\left[\frac1N\sum_{\ell=1}^N \xi_\ell\right]
\right|
\ge t
\right)
\le
2\exp\!\left(
-\frac{2Nt^2}{(b-a)^2}
\right).
\end{equation}
\end{lem}

Lemma~\ref{lem:hoeffding} is classical; see, for example,
Hoeffding~\cite{hoeffding1963probability}.

\begin{lem}[Truncation error for local monomials]\label{lem:tailtrunc}
Assume Assumption~\ref{assump:sampletail}. Let $f$ be any monomial of total degree at most
$2n$ that depends on at most $2r$ coordinates of either $x$ or $y$. Then there exist constants
$C_{\rm tail},c_{\rm tail}>0$, depending only on $n,r,a_{\rm tail},A_{\rm tail}$, such that
for all $R\geq1$,
\begin{equation}\label{tailtrunc}
\mathbb{E}\Bigl[|f(U)|\,\mathbf{1}_{\{\max_{i\in S_f}|U_i|>R\}}\Bigr]
\le
C_{\rm tail}R^{2n}e^{-c_{\rm tail}R^2}.
\end{equation}
Here $U\sim\mu$ if $f$ is an $x$-moment function, $U\sim\nu$ if $f$ is a $y$-moment function,
and $S_f$ denotes the coordinate support of $f$. If $S_f=\varnothing$, the indicator in
\eqref{tailtrunc} is interpreted as zero.
\end{lem}

\begin{proof}
If $S_f=\varnothing$, then the left-hand side of \eqref{tailtrunc} is zero and there is
nothing to prove. Otherwise, set $W:=\max_{i\in S_f}|U_i|$. Since $|S_f|\leq 2r$,
Assumption~\ref{assump:sampletail} and the union bound give
\begin{equation}
\mathbb{P}(W>t)
\leq
2r A_{\rm tail}e^{-a_{\rm tail}t^2},
\qquad t\geq0.
\notag
\end{equation}
Since $|f(U)|\leq W^{2n}$, integration by parts yields
\begin{equation}
\mathbb{E}\bigl[W^{2n}\mathbf{1}_{\{W>R\}}\bigr]
\leq
R^{2n}\mathbb{P}(W>R)
+
2n\int_R^\infty t^{2n-1}\mathbb{P}(W>t)\,\dd t.
\notag
\end{equation}
The right-hand side is bounded by
$C_{\rm tail}R^{2n}e^{-c_{\rm tail}R^2}$ after adjusting constants depending only on
$n,r,a_{\rm tail},A_{\rm tail}$.
\end{proof}

\begin{lem}[Sparse Gaussian concentration]\label{lem:gauss-sample-concentration}
Let $X_i^{(1)},\ldots,X_i^{(N)}$ be independent samples from
$\mathcal N(m_i,\Sigma_i)$, $i=1,2$, where
$aI_d\preceq \Sigma_i\preceq bI_d$ and $\|m_1-m_2\|\le M\sqrt d$.
Let $\widehat m_i$ and $\widehat\Sigma_i$ be the empirical mean and the
sample-centered empirical covariance, with normalization $1/N$, and let $G^h$
be the graph specifying the retained covariance entries.
There is a constant $C>0$, depending only on $a,b,M$, such that with probability
at least $1-\delta$,
\begin{align*}
&\|\widehat m_i-m_i\|^2
\le
C\frac{d+\log(1/\delta)}{N},\qquad i=1,2,\\
&\left|
\left\langle m_1-m_2,
(\widehat m_1-m_1)-(\widehat m_2-m_2)
\right\rangle
\right|
\le
C\sqrt{\frac{d\log(1/\delta)}{N}},\\
&\|[\widehat\Sigma_i-\Sigma_i]_{G^h}\|_F^2
\le
C\left\{
\frac{d+|E(G^h)|+\log(1/\delta)}{N}
+
\left(
\frac{d+|E(G^h)|+\log(1/\delta)}{N}
\right)^2
\right\},\qquad i=1,2.
\end{align*}
If, in addition, $d+|E(G^h)|+\log(1/\delta)\le cN$ for a sufficiently small
absolute constant $c$, then with probability at least $1-\delta$ the
preceding estimates hold and
\begin{align*}
\|[\widehat\Sigma_i-\Sigma_i]_{G^h}\|_\infty
&\le
C\left(
\frac{\log(16(d+|E(G^h)|))+\log(1/\delta)}{N}
\right)^{1/2},
\qquad i=1,2,\\
\|[\widehat\Sigma_i-\Sigma_i]_{G^h}\|_F^2
&\le
C\frac{d+|E(G^h)|+\log(1/\delta)}{N},
\qquad i=1,2.
\end{align*}
\end{lem}

\begin{proof}
For the mean estimates, note first that
$\widehat m_i-m_i\sim \mathcal N(0,\Sigma_i/N)$ and
$\Sigma_i\preceq bI_d$. The upper-tail part of the
Laurent--Massart chi-square inequality~\cite[Lemma~1]{laurent2000adaptive}
states that, for every $t>0$,
\[
\mathbb P\left\{
\frac{N}{b}\|\widehat m_i-m_i\|^2\ge d+2\sqrt{dt}+2t
\right\}
\le e^{-t}.
\]
Using $2\sqrt{dt}\le d+t$, taking $t=\log 16+\log(1/\delta)$, and applying a
union bound over $i=1,2$ gives
\[
\|\widehat m_i-m_i\|^2
\le
C\frac{d+\log(1/\delta)}{N},
\qquad i=1,2,
\]
with probability at least $1-\delta/8$. Similarly,
\[
\left\langle m_1-m_2,
(\widehat m_1-m_1)-(\widehat m_2-m_2)
\right\rangle
\]
is centered Gaussian with variance
\[
\frac1N
(m_1-m_2)^\top(\Sigma_1+\Sigma_2)(m_1-m_2)
\le
\frac{2bM^2d}{N}.
\]
The scalar Gaussian tail bound therefore gives
\[
\left|
\left\langle m_1-m_2,
(\widehat m_1-m_1)-(\widehat m_2-m_2)
\right\rangle
\right|
\le
C\sqrt{\frac{d\log(1/\delta)}{N}}
\]
with probability at least $1-\delta/8$.

It remains to control the covariance terms. We prove the bounds for one fixed
$i\in\{1,2\}$, which indexes the two Gaussian sample clouds rather than a
coordinate, and then take a union bound over $i$. Write
$\Sigma=\Sigma_i$ and $\widehat\Sigma=\widehat\Sigma_i$.

For the entrywise bound under the additional sample-size condition, we use the
standard empirical covariance concentration inequality for sub-Gaussian vectors.
Ravikumar et
al.~\cite[Lemma~1]{ravikumar2011high} prove the per-entry sample
covariance tail bound, which in the Gaussian case gives the small-deviation form
$\mathbb P\{|(\widehat\Sigma-\Sigma)_{jk}|>\varepsilon\}
\le 4\exp(-c_0N\varepsilon^2)$ for each fixed covariance entry, whenever
$0<\varepsilon\le \varepsilon_0$, where $c_0,\varepsilon_0>0$ depend only on
$b$.
Taking
\[
\varepsilon=C
\left(
\frac{\log(16(d+|E(G^h)|))+\log(1/\delta)}{N}
\right)^{1/2}
\]
we first choose $C$ large enough, depending only on $c_0$, so that the desired
tail probability follows once $\varepsilon\le\varepsilon_0$. We then choose the
constant $c$ in the sample-size condition small enough, depending only on this
fixed $C$ and on $\varepsilon_0$, so that
$d+|E(G^h)|+\log(1/\delta)\le cN$ implies $\varepsilon\le\varepsilon_0$; here
we use that the logarithmic numerator in $\varepsilon$ is bounded by an
absolute multiple of $d+|E(G^h)|+\log(1/\delta)$.
Thus
\[
\mathbb P\left\{
|(\widehat\Sigma-\Sigma)_{jk}|
>
C\left(
\frac{\log(16(d+|E(G^h)|))+\log(1/\delta)}{N}
\right)^{1/2}
\right\}
\le
\frac{\delta}{8(d+|E(G^h)|)}.
\]
A union bound over the $d+|E(G^h)|$ retained entries and then over $i=1,2$
proves the displayed $\ell_\infty$ estimate with probability at least
$1-\delta/4$.

For the Frobenius bound, set
$Y^{(\ell)}:=X_i^{(\ell)}-m_i\in\mathbb R^d$, a centered Gaussian vector, and
define the true-mean-centered retained covariance error
\[
Z:=
\left[
\frac1N\sum_{\ell=1}^N
(Y^{(\ell)}Y^{(\ell)\top}-\Sigma)
\right]_{G^h}.
\]
Here $Z$ is not exactly $[\widehat\Sigma-\Sigma]_{G^h}$, because
$\widehat\Sigma$ is centered at the sample mean. Indeed,
\[
[\widehat\Sigma-\Sigma]_{G^h}
=
Z-\left[(\widehat m_i-m_i)(\widehat m_i-m_i)^\top\right]_{G^h}.
\]
Let $S$ be the subspace of symmetric matrices supported on $G^h$, equipped with
the Frobenius norm. Identifying $S$ with a Euclidean space of dimension
$d+|E(G^h)|$ through an orthonormal basis, the standard volumetric estimate for
unit spheres gives a $1/2$-net $\mathcal N$ satisfying
$|\mathcal N|\le (1+2/(1/2))^{d+|E(G^h)|}=5^{d+|E(G^h)|}$. For any fixed
$B\in S$ with $\|B\|_F=1$,
\[
\langle Z,B\rangle
=
\frac1N\sum_{\ell=1}^N
\left\{(Y^{(\ell)})^\top B Y^{(\ell)}-\Tr(\Sigma B)\right\}.
\]
For this fixed scalar quadratic form, write
$Y^{(\ell)}=\Sigma^{1/2}g^{(\ell)}$ with
$g^{(\ell)}\sim\mathcal N(0,I_d)$. The matrix
$\Sigma^{1/2}B\Sigma^{1/2}$ has both operator and Frobenius norms bounded by
$C$ because $\Sigma\preceq bI_d$ and $\|B\|_F=1$. The scalar Hanson--Wright
inequality, or equivalently the standard Gaussian quadratic-form concentration
\cite[Chapter~2]{wainwright2019high}, therefore implies
\[
\mathbb P\left\{|\langle Z,B\rangle|>t\right\}
\le
2\exp\left[-cN\min(t^2,t)\right],
\qquad t>0.
\]
Taking
\[
t=C\left(
\sqrt{
\frac{d+|E(G^h)|+u}{N}
}
+
\frac{d+|E(G^h)|+u}{N}
\right)
\]
with $u\geq 0$ and increasing $C$ if necessary gives
\[
\mathbb P\left\{|\langle Z,B\rangle|>t\right\}
\le
2e^{-u-(d+|E(G^h)|)\log 5}.
\]
A union bound over $B\in\mathcal N$ gives the same bound simultaneously on the
net with probability at least $1-2e^{-u}$. On this event, the net approximation
implies
\[
\|Z\|_F
=
\sup_{\substack{B\in S\\ \|B\|_F=1}}
\langle Z,B\rangle
\le
2\max_{B\in\mathcal N}|\langle Z,B\rangle|
\le
C\left(
\sqrt{
\frac{d+|E(G^h)|+u}{N}
}
+
\frac{d+|E(G^h)|+u}{N}
\right).
\tag{A.1}
\]
The retained Frobenius norm of the sample-centering correction is bounded by
$\|\widehat m_i-m_i\|^2$ and is absorbed by the mean estimate above.

Taking $u=\log 16+\log(1/\delta)$ in (A.1) gives, with probability at least
$1-\delta/8$,
\[
\left\|
[\widehat\Sigma-\Sigma]_{G^h}
\right\|_F^2
\le
C\left\{
\frac{d+|E(G^h)|+\log(1/\delta)}{N}
+
\left(
\frac{d+|E(G^h)|+\log(1/\delta)}{N}
\right)^2
\right\}.
\]
A union bound over $i=1,2$ gives the Frobenius estimate simultaneously for the
two samples.

Finally, taking the intersection of the mean-norm event, the linear-mean event,
and the Frobenius covariance event gives the first three estimates with total
probability at least $1-\delta$ after adjusting constants. Under the additional
condition $d+|E(G^h)|+\log(1/\delta)\le cN$, we also intersect with the
entrywise covariance event. This condition implies the small-deviation condition
needed in the entrywise covariance bound, after decreasing $c$ if necessary, and
it absorbs the quadratic term in the Frobenius estimate into the linear one.
The simplified Frobenius bound and the entrywise estimate follow with total
probability at least $1-\delta$.
\end{proof}

\begin{lem}[Dual perturbation bound for SDP values]\label{lem:sdp-perturb}
Consider the value function $p(\cdot)$ in \eqref{sdpstab-primal} and its dual
\eqref{sdpstab-dual}. Let $b_1$ and $b_2$ be two right-hand sides for the prescribed
sample-estimated constraints. Suppose that strong duality holds and that the dual optimum
is attained at both $b_1$ and $b_2$. For $i=1,2$, let $\Lambda^\star(b_i)$ be the set of
optimal dual multipliers $\lambda$ associated with the constraint $\A(M)=b_i$. If
\begin{equation}
R\ge
\max_{i=1,2}\inf_{\lambda\in\Lambda^\star(b_i)}\|\lambda\|_1,
\notag
\end{equation}
then
\begin{equation}\label{sdp-perturb-bound}
|p(b_1)-p(b_2)|
\le
R\|b_1-b_2\|_\infty .
\end{equation}
\end{lem}

\begin{proof}
Fix $\alpha>0$. Choose $\lambda_1\in\Lambda^\star(b_1)$ with
$\|\lambda_1\|_1\le R+\alpha$. Since the dual feasible set in \eqref{sdpstab-dual} does
not depend on the right-hand side, $\lambda_1$ is also dual feasible for the problem with
right-hand side $b_2$. Strong duality at $b_1$ and weak duality at $b_2$ give
\begin{equation}
p(b_1)-p(b_2)
\le
\langle b_1,\lambda_1\rangle-\langle b_2,\lambda_1\rangle
\le
(R+\alpha)\|b_1-b_2\|_\infty .
\notag
\end{equation}
Letting $\alpha\downarrow0$ yields
$p(b_1)-p(b_2)\le R\|b_1-b_2\|_\infty$. Repeating the same argument with an optimal dual
multiplier at $b_2$ gives the reverse inequality.
\end{proof}

}

\begin{lem}[Gaussian relaxation error]\label{propapp}
Let $\mu=\mathcal{N}(m_1,\Sigma_1)$ and $\nu=\mathcal{N}(m_2,\Sigma_2)$ with
$\Sigma_1,\Sigma_2\succ0$. Suppose the optimal value of \eqref{GSmom} is
${\rm opt}_{\G}$. Let
\begin{equation}\label{dLam}
\begin{aligned}
\Lambda_1^\star
&:=\Sigma_1^{-1/2}
\left[ \Sigma_1^{1/2}\Sigma_2 \Sigma_1^{1/2} \right]^{1/2}
\Sigma_1^{-1/2},\\
\Lambda_2^\star
&:=\Sigma_1^{1/2}
\left[ \Sigma_1^{1/2}\Sigma_2 \Sigma_1^{1/2} \right]^{-1/2}
\Sigma_1^{1/2}.
\end{aligned}
\end{equation}
Then
\begin{equation}\label{ineapp}
W^2_2(\mu,\nu)-\epsilon_{\G}\leq {\rm opt}_{\G}\leq W^2_2(\mu,\nu),
\end{equation}
where
\begin{align}\label{defiep}
\epsilon_{\G}
&:=2\Tr(\Sigma_1)
\inf_{B_1\in\S_{\G}}\|B_1-\Lambda_1^\star\|_2
+2\Tr(\Sigma_2)
\inf_{B_2\in\S_{\G}}\|B_2-\Lambda_2^\star\|_2 .
\end{align}
Here $\S_{\G}$ is defined in \eqref{defspset}.
\end{lem}

\begin{proof}
The problem \eqref{GSmom} can be simplified into the following problem by using the Schur complement of the first entry of $X$ as the decision variable
\begin{align}\label{GSmom1}
\min\ &\ \|m_1-m_2\|^2+\Tr(\Sigma_1)+\Tr(\Sigma_2)-2\Tr(Y) \\
{\rm s.t.}\ &\ [Z_1]_{\G}=[\Sigma_1]_{\G},\quad
              [Z_2]_{\G}=[\Sigma_2]_{\G}, \notag \\
&\ \begin{bmatrix}
Z_1 & Y \\
Y^\top & Z_2
\end{bmatrix} \in \S^{2d}_+, \notag
\end{align}
where the operator $[\cdot]_{\G}$ projects a matrix onto the sparsity pattern $\G$  (Definition~\ref{itm:third}). The dual problem of \eqref{GSmom1} is:
\begin{align}\label{GSmomd1}
\max\ &\ \|m_1-m_2\|^2+\Tr(\Sigma_1)+\Tr(\Sigma_2)-\<\Sigma_1,\Lambda_1\>-\<\Sigma_2,\Lambda_2\> \\
{\rm s.t.}\ &\ \begin{bmatrix} \Lambda_1&-I_d\\-I_d & \Lambda_2 \end{bmatrix}\in \S^{2d}_+,\ \Lambda_1,\Lambda_2\in {\S_{\G}}, \notag
\end{align}
where {$\S_{\G}$} is defined in \eqref{defspset}. When $\G$ is complete graph, \eqref{GSmom1} and \eqref{GSmomd1} become
\begin{equation}\label{GSmomc1}
\min\Big\{ \|m_1-m_2\|^2+\Tr(\Sigma_1)+\Tr(\Sigma_2)-2\Tr(Y):\ \begin{bmatrix}
\Sigma_1 & Y \\
Y^\top & \Sigma_2
\end{bmatrix}\in \S^{2d}_+\Big\},
\end{equation}
and
\begin{equation}\label{GSmomcd1}
\max\Big\{ \|m_1-m_2\|^2+\Tr(\Sigma_1)+\Tr(\Sigma_2)-\<\Sigma_1,\Lambda_1\>-\<\Sigma_2,\Lambda_2\>:\ \begin{bmatrix} \Lambda_1&-I_d\\-I_d & \Lambda_2 \end{bmatrix}\in \S^{2d}_+\Big\},
\end{equation}
respectively. The problem \eqref{GSmomc1} and \eqref{GSmomcd1} have the following closed-form solutions, whose function values are exactly $W_2^2(\mu,\nu).$
\begin{equation}\label{primalf}
\begin{bmatrix}
\Sigma_1& \Sigma_1^{1/2}\left[ \Sigma_1^{1/2}\Sigma_2 \Sigma_1^{1/2} \right]^{1/2}\Sigma_1^{-1/2}\\
\Sigma_1^{-1/2}\left[ \Sigma_1^{1/2}\Sigma_2 \Sigma_1^{1/2} \right]^{1/2}\Sigma_1^{1/2}&\Sigma_2
\end{bmatrix},
\end{equation}
\begin{equation}\label{dSf}
\begin{bmatrix}
\Lambda_1^\star& -I_d\\
-I_d&\Lambda_2^\star
\end{bmatrix}.
\end{equation}
One can easily check that \eqref{primalf} and \eqref{dSf} satisfy the constraints in \eqref{GSmomc1} and \eqref{GSmomcd1}, with objective values equal to the exact OT cost \eqref{W2GGM}.

Now let $B_1,B_2\in\S_{\G}$ and set
\begin{equation}
\eta_i:=\|B_i-\Lambda_i^\star\|_2,
\qquad
\widetilde\Lambda_i:=B_i+\eta_i I_d,
\qquad i=1,2.
\notag
\end{equation}
Since $B_i-\Lambda_i^\star+\eta_i I_d\succeq0$, we have
\begin{equation}
\begin{bmatrix}
\widetilde\Lambda_1&-I_d\\
-I_d&\widetilde\Lambda_2
\end{bmatrix}
=
\begin{bmatrix}
\Lambda_1^\star&-I_d\\
-I_d&\Lambda_2^\star
\end{bmatrix}
+
\begin{bmatrix}
B_1-\Lambda_1^\star+\eta_1I_d&0\\
0&B_2-\Lambda_2^\star+\eta_2I_d
\end{bmatrix}
\succeq0.
\notag
\end{equation}
Thus $(\widetilde\Lambda_1,\widetilde\Lambda_2)$ is feasible for \eqref{GSmomd1}. Substituting this feasible point into \eqref{GSmomd1} gives
\begin{align}
{\rm opt}_{\G}
&\ge
W_2^2(\mu,\nu)
-
\sum_{i=1}^2
\left\langle \Sigma_i,B_i-\Lambda_i^\star+\eta_iI_d\right\rangle 
\notag\\
&\ge
W_2^2(\mu,\nu)-2\Tr(\Sigma_1)\eta_1-2\Tr(\Sigma_2)\eta_2,
\notag
\end{align}
where we used $-\eta_iI_d\preceq B_i-\Lambda_i^\star\preceq \eta_iI_d$ and $\Sigma_i\succeq0$.
Taking the infimum over $B_1,B_2\in\S_{\G}$ gives the left-hand side of \eqref{ineapp}. The right-hand side of \eqref{ineapp} is immediate because the cluster moment relaxation provides a lower bound of the exact OT cost.
\end{proof}

\section{Proof details}\label{sec:proof}

We first prove Theorems~\ref{thmexp} and
\ref{thm:gauss-sample}. For these Gaussian results, we assume that $\mu,$ $\nu$ are
Gaussian distributions $\mathcal{N}(m_1,\Sigma_1),\mathcal{N}(m_2,\Sigma_2)$ for some
$\Sigma_1,\Sigma_2\succ 0.$ We use Lemma~\ref{propapp} from
Appendix~\ref{sec:useful-lemmas}, which provides an error bound between the convex
relaxation \eqref{GSmom} and the exact OT cost \eqref{W2GGM}.

\medskip

\noindent{\bf Proof of Theorem~\ref{thmexp}}

\begin{proof}
When $\G$ is a complete graph, the two infima in \eqref{defiep} are zero. Thus,
Lemma~\ref{propapp} implies the relaxation is exact. This completes the proof of (i).

We next consider $\G=G^h$. Our proof idea is motivated by the Demko-Moss-Smith theorem
\cite{demko1984decay}. {Let $\Lambda_1^\star,\Lambda_2^\star$ denote the dense
Gaussian dual optimizers from Lemma~\ref{propapp}, namely
\begin{equation}
\Lambda_1^\star=\Sigma_1^{-1/2}\left[ \Sigma_1^{1/2}\Sigma_2 \Sigma_1^{1/2} \right]^{1/2}\Sigma_1^{-1/2},
\quad
\Lambda_2^\star=\Sigma_1^{1/2}\left[ \Sigma_1^{1/2}\Sigma_2 \Sigma_1^{1/2} \right]^{-1/2}\Sigma_1^{1/2}.
\notag
\end{equation}
}
Because $b I_d\succeq \Sigma_1,\Sigma_2 \succeq a I_d,$ we have
\begin{equation}\label{specbd}
a^{-2} I_d\succeq \Sigma_1^{-1/2}\Sigma_2^{-1}\Sigma_1^{-1/2}\succeq b^{-2} I_d.
\end{equation}
From the approximation theorem of Chebyshev polynomials \cite{trefethen2019approximation}, there are constants $C_{a,b}>0$ and $\rho_{a,b}>1$ such that, for any $k\in\N^+$, there is a degree $k$ polynomial $p_k$ satisfying
\begin{equation}\label{polyapprox}
\|x^{-1/2}-p_k(x)\|_{\infty,[b^{-2},a^{-2}]}
\leq C_{a,b}\rho_{a,b}^{-k}.
\end{equation}
Thus
\begin{equation}\label{polyapproxS}
\left\|\left[\Sigma_1^{1/2}\Sigma_2\Sigma_1^{1/2}\right]^{1/2}
-p_k\(\Sigma_1^{-1/2}\Sigma_2^{-1}\Sigma_1^{-1/2}\)\right\|_2
\leq C_{a,b}\rho_{a,b}^{-k}.
\end{equation}
Let $p_k(x)=\sum_{i=0}^k\gamma_i x^i$. Then
\begin{align}
&\Sigma_1^{-1/2}p_k\(\Sigma_1^{-1/2}\Sigma_2^{-1}\Sigma_1^{-1/2}\)\Sigma_1^{-1/2}
=\sum_{i=0}^k\gamma_i\Sigma_1^{-1/2}\(\Sigma_1^{-1/2}\Sigma_2^{-1}\Sigma_1^{-1/2}\)^i\Sigma_1^{-1/2}
\notag\\
&\qquad=\sum_{i=0}^k\gamma_i\(\Sigma_1^{-1}\Sigma_2^{-1}\)^i\Sigma_1^{-1}
=p_k\(\Sigma_1^{-1}\Sigma_2^{-1}\)\Sigma_1^{-1}. \label{Aug_20_1}
\end{align}
Since $\Sigma_1^{-1},\Sigma_2^{-1}\in\S_G$, the matrix in \eqref{Aug_20_1} belongs to
$\S_{G^{2k+1}}$. Combining this with \eqref{polyapproxS} gives
\begin{equation}\label{Aug_20_2}
\left\|\Lambda_1^\star-p_k\(\Sigma_1^{-1}\Sigma_2^{-1}\)\Sigma_1^{-1}\right\|_2
\leq C_{a,b}a^{-1}\rho_{a,b}^{-k}.
\end{equation}
Similarly,
\begin{equation}\label{Aug_20_3}
\Sigma_2^{-1}\Sigma_1^{-1/2}p_k\(\Sigma_1^{-1/2}\Sigma_2^{-1}\Sigma_1^{-1/2}\)\Sigma_1^{1/2}
=
\Sigma_2^{-1}p_k\(\Sigma_1^{-1}\Sigma_2^{-1}\)
\in\S_{G^{2k+1}},
\end{equation}
and
\begin{equation}\label{Aug_20_4}
\left\|\Lambda_2^\star-
\Sigma_2^{-1}p_k\(\Sigma_1^{-1}\Sigma_2^{-1}\)\right\|_2
\leq C_{a,b}b^{1/2}a^{-3/2}\rho_{a,b}^{-k}.
\end{equation}
Therefore, whenever $h\geq 2k+1$, Lemma~\ref{propapp} gives
\begin{align}\label{Aug_20_11}
\epsilon_{G^h}
&\leq
2\Tr(\Sigma_1)C_{a,b}a^{-1}\rho_{a,b}^{-k}
+2\Tr(\Sigma_2)C_{a,b}b^{1/2}a^{-3/2}\rho_{a,b}^{-k}
\notag\\
&\leq
2\(C_{a,b}a^{-3/2}b^{3/2}+C_{a,b}a^{-1}b\)d\rho_{a,b}^{-k},
\end{align}
where we used $\Tr(\Sigma_1),\Tr(\Sigma_2)\leq bd$. Taking $k$ such that
$h\geq 2k+1$ and $k\geq (h-3)/2$, we obtain
\begin{equation}\label{Aug_20_13}
\epsilon_{G^h}
\leq C'_{a,b}d\left[\rho_{a,b}^{-1/2}\right]^h,
\qquad h\geq3.
\end{equation}
For $h<3$, the bound follows after increasing the constant, since
$\|\Lambda_1^\star\|_2+\|\Lambda_2^\star\|_2\leq C_{a,b}$ and hence
$\epsilon_{G^h}\leq C_{a,b}d$. This proves (ii), after renaming the constants.
\end{proof}

\medskip

{
\noindent{\bf Proof of Theorem~\ref{thm:gauss-sample}}

\begin{proof}
Let $\Lambda_1^\star,\Lambda_2^\star$ be the dense Gaussian dual matrices in
\eqref{dLam}. The proof of Theorem~\ref{thmexp} shows that, for $\G=G^h$, there exist
matrices $\Lambda_1^{\rm sp},\Lambda_2^{\rm sp}\in\S_{G^h}$ and a constant
$C_0>0$ such that
\begin{equation}\label{gauss-sample-sparse-approx}
\|\Lambda_i^{\rm sp}-\Lambda_i^\star\|_2\le C_0\rho^{-h},
\qquad
\|\Lambda_i^{\rm sp}\|_2\le C_0,
\qquad i=1,2,
\end{equation}
where $C_0$ depends only on $a,b$ and $\rho>1$ is the constant in
Theorem~\ref{thmexp}. We choose $C_0$ large enough once so that it also covers
the intermediate concentration estimates below.

We first record the concentration event used below. By
Lemma~\ref{lem:gauss-sample-concentration}, applied with $\delta/2$ and with the harmless
factor $\log2$ absorbed into the constant, with probability at least $1-\delta/2$ the
following four estimates hold simultaneously:
\begin{align}
\|\widehat m_i-m_i\|^2
&\le
C_0\frac{d+\log(1/\delta)}{N},\qquad i=1,2, \label{gauss-sample-event-mean}\\
\left|
\left\langle m_1-m_2,
(\widehat m_1-m_1)-(\widehat m_2-m_2)
\right\rangle
\right|
&\le
C_0\sqrt{\frac{d\log(1/\delta)}{N}}, \label{gauss-sample-event-mean-linear}\\
\|[\widehat\Sigma_i-\Sigma_i]_{G^h}\|_F^2
&\le
C_0\frac{d+|E(G^h)|+\log(1/\delta)}{N},\qquad i=1,2, \label{gauss-sample-event-frob}\\
\|[\widehat\Sigma_i-\Sigma_i]_{G^h}\|_\infty
&\le
C_0\left(
\frac{\log(d+|E(G^h)|)+\log(1/\delta)}{N}
\right)^{1/2},
\qquad i=1,2, \label{gauss-sample-event-entry}
\end{align}
In addition, by the standard Gaussian concentration for fixed quadratic forms
\cite[Chapter~2]{wainwright2019high}, with probability at least $1-\delta/2$, for
$A=I_d-\Lambda_i^{\rm sp}-C_0\rho^{-h}I_d$ and
$A=[I_d-\Lambda_i^\star]_{G^h}$, $i=1,2$,
\begin{equation}\label{gauss-sample-event-linear}
\left|\left\langle \widehat\Sigma_i-\Sigma_i,A\right\rangle\right|
\le
C_0\left(
\sqrt{\frac{d\log(1/\delta)}{N}}
+
\frac{d+\log(1/\delta)}{N}
\right).
\end{equation}
We work on the intersection of these two events, which has probability at least
$1-\delta$. The matrices appearing in
\eqref{gauss-sample-event-linear} have Frobenius norm $O(\sqrt d)$. The first two
also have bounded operator norm by \eqref{gauss-sample-sparse-approx}; for
$[I_d-\Lambda_i^\star]_{G^h}$ we use the crude operator bound $O(\sqrt d)$, and
the condition $d+|E(G^h)|+\log(1/\delta)\le cN$ absorbs the corresponding
operator-norm contribution into the displayed right-hand side, since
$\sqrt d\,\log(1/\delta)/N\le C_0\sqrt{d\log(1/\delta)/N}$ by the preceding
choice of $C_0$.

The smallness condition \eqref{smcond} in the theorem and \eqref{gauss-sample-event-entry} imply
\begin{equation}\label{gauss-sample-op-small}
\left\|[\widehat\Sigma_i-\Sigma_i]_{G^h}\right\|_2
\le
(1+\Delta_h)
\left\|[\widehat\Sigma_i-\Sigma_i]_{G^h}\right\|_\infty
\le a/2,
\qquad i=1,2.
\end{equation}

We prove the upper bound first. The matrix
\begin{equation}
\Sigma_i+[\widehat\Sigma_i-\Sigma_i]_{G^h}
\notag
\end{equation}
is positive definite by \eqref{gauss-sample-op-small} and matches $\widehat\Sigma_i$ on the
entries of $G^h$. Therefore it is a feasible covariance completion for the empirical SDP.
Hence the covariance part of the empirical SDP is bounded above by
\begin{equation}\label{GSform}
\Tr(\bar\Sigma_1+\bar\Sigma_2)
-2\Tr\left(\bar\Sigma_1^{1/2}\bar\Sigma_2\bar\Sigma_1^{1/2}\right)^{1/2},\qquad
\bar\Sigma_i:=\Sigma_i+[\widehat\Sigma_i-\Sigma_i]_{G^h},\quad i=1,2.
\end{equation}
We consider the above term \eqref{GSform} as a function of the perturbation term $[\widehat\Sigma_i-\Sigma_i]_{G^h}.$ It is twice continuously differentiable in the
spectral neighborhood determined by \eqref{gauss-sample-op-small}. After Taylor expansion, its second-order
remainder is bounded by
\begin{equation}
C_0\sum_{i=1}^2\left\|[\widehat\Sigma_i-\Sigma_i]_{G^h}\right\|_F^2\leq C_0^2\frac{d+|E(G^h)|+\log (1/\delta)}{N}.
\notag
\end{equation}
At $(\Sigma_1,\Sigma_2)$, the first-order term in the dense Gaussian covariance
value is
\begin{align}
\sum_{i=1}^2
\left\langle I_d-\Lambda_i^\star,[\widehat\Sigma_i-\Sigma_i]_{G^h}\right\rangle
&=
\sum_{i=1}^2
\left\langle [I_d-\Lambda_i^\star]_{G^h},\widehat\Sigma_i-\Sigma_i\right\rangle
\notag\\
&\le
C_0\left(
\sqrt{\frac{d\log(1/\delta)}{N}}
+
\frac{d+\log(1/\delta)}{N}
\right),
\notag
\end{align}
where the last inequality uses \eqref{gauss-sample-event-linear}. Therefore
\begin{align}\label{gauss-sample-upper-cov}
\phi_{G^h}(\widehat\Sigma_1,\widehat\Sigma_2)
&\le
\Tr(\Sigma_1+\Sigma_2)-2\Tr\left(\Sigma_1^{1/2}\Sigma_2\Sigma_1^{1/2}\right)^{1/2}
\notag\\
&\quad+
C_0\left(
\sqrt{\frac{d\log(1/\delta)}{N}}
+
\frac{d+|E(G^h)|+\log(1/\delta)}{N}
\right).
\end{align}

We next prove the lower bound. By \eqref{gauss-sample-sparse-approx} and the proof of
Lemma~\ref{propapp}, the matrices
\begin{equation}
\widetilde\Lambda_i:=\Lambda_i^{\rm sp}+C_0\rho^{-h}I_d,
\qquad i=1,2,
\notag
\end{equation}
are feasible for the dual problem \eqref{gauss-cov-dual} with $\G=G^h$, and they satisfy
\begin{align}\label{gauss-sample-cert-true}
&\Tr(\Sigma_1)+\Tr(\Sigma_2)
-\langle \Sigma_1,\widetilde\Lambda_1\rangle
-\langle \Sigma_2,\widetilde\Lambda_2\rangle 
\notag\\
&\qquad\ge
\Tr(\Sigma_1+\Sigma_2)-2\Tr\left(\Sigma_1^{1/2}\Sigma_2\Sigma_1^{1/2}\right)^{1/2}
-Cd\rho^{-h}.
\end{align}
The same certificate is feasible for the empirical dual problem, since the feasible set does
not depend on the covariance input. Hence
\begin{align}\label{gauss-sample-lower-start}
&\phi_{G^h}(\widehat\Sigma_1,\widehat\Sigma_2)
\ge
\Tr(\widehat\Sigma_1)+\Tr(\widehat\Sigma_2)
-\langle \widehat\Sigma_1,\widetilde\Lambda_1\rangle
-\langle \widehat\Sigma_2,\widetilde\Lambda_2\rangle 
\notag\\
&=
\Tr(\Sigma_1)+\Tr(\Sigma_2)
-\langle \Sigma_1,\widetilde\Lambda_1\rangle
-\langle \Sigma_2,\widetilde\Lambda_2\rangle+
\sum_{i=1}^2
\left\langle \widehat\Sigma_i-\Sigma_i,I_d-\widetilde\Lambda_i\right\rangle.
\end{align}
Using \eqref{gauss-sample-event-linear} in \eqref{gauss-sample-lower-start} together with
\eqref{gauss-sample-cert-true}, we obtain
\begin{align}\label{gauss-sample-lower-cov}
\phi_{G^h}(\widehat\Sigma_1,\widehat\Sigma_2)
&\ge
\Tr(\Sigma_1+\Sigma_2)-2\Tr\left(\Sigma_1^{1/2}\Sigma_2\Sigma_1^{1/2}\right)^{1/2}
\notag\\
&\quad-
C_0\left(
 d\rho^{-h}
+
\sqrt{\frac{d\log(1/\delta)}{N}}
+
\frac{d+\log(1/\delta)}{N}
\right).
\end{align}
Combining \eqref{gauss-sample-upper-cov} and \eqref{gauss-sample-lower-cov} gives the
covariance part of the desired estimate.

It remains to control the mean term. Set
$e=(\widehat m_1-m_1)-(\widehat m_2-m_2)$. Then
\begin{align}\label{gauss-sample-mean}
\left|\|\widehat m_1-\widehat m_2\|^2-\|m_1-m_2\|^2\right|
&=
\left|2\langle m_1-m_2,e\rangle+\|e\|^2\right|
\notag\\
&\le
2\left|\langle m_1-m_2,e\rangle\right|+
2\|\widehat m_1-m_1\|^2+2\|\widehat m_2-m_2\|^2
\notag\\
&\le
C_0\left(
\sqrt{\frac{d\log(1/\delta)}{N}}
+
\frac{d+\log(1/\delta)}{N}
\right),
\end{align}
where the last inequality uses \eqref{gauss-sample-event-mean} and
\eqref{gauss-sample-event-mean-linear}.
Adding \eqref{gauss-sample-mean} to the covariance estimate proves
\eqref{gauss-sample-total}.
\end{proof}
}

\medskip

{
\noindent{\bf Proof of Proposition~\ref{prop:samplecomplex}}

\begin{proof}
Let $\mathcal{F}_x$ be the finite collection of all scalar monomials that appear as entries of
${\rm R}_x(\Phi_k\Phi_k^\top)$ and ${\rm R}_x(\Phi_i\Phi_j^\top)$ for $ij\in E(\G)$.
Define $\mathcal{F}_y$ analogously from
${\rm R}_y(\Phi_k\Phi_k^\top)$ and ${\rm R}_y(\Phi_i\Phi_j^\top)$. We use the disjoint union
\begin{equation}\label{samplepf-class}
\mathcal F:=\mathcal F_x\sqcup\mathcal F_y ,
\end{equation}
so an $x$-moment and a $y$-moment are treated as two different elements even if they are
represented by the same monomial. This keeps track of which sample cloud is used to estimate
the moment.
By construction,
\begin{equation}\label{samplepf0}
|\mathcal{F}_x|\le M_{\rm mom},
\qquad
|\mathcal{F}_y|\le M_{\rm mom}.
\end{equation}
Thus
\begin{equation}\label{samplepf-card}
|\mathcal F|\le 2M_{\rm mom}.
\end{equation}
Every element of $\mathcal F$ is a monomial of total degree at most $2n$ and is supported
on either one cluster or one edge of the reference graph. Hence it depends on at most
$2r$ coordinates. For $f\in\mathcal F$, let
\begin{equation}\label{samplepf-support}
S_f:=\{i\in[d]: f(z)\ {\rm depends\ on}\ z_i\}
\end{equation}
denote its coordinate support. Then $|S_f|\le 2r$.

Throughout the proof, $C$ denotes a positive constant depending only on
$n,r,a_{\rm tail},A_{\rm tail}$. It may be enlarged in estimates, but the radius
$R_N$ below is fixed once and for all with one sufficiently large choice. In particular,
we choose the constant in $R_N$ so that
\begin{equation}\label{samplepf-C-choice}
a_{\rm tail}C^2\ge 1,
\qquad
c_{\rm tail}C^2\ge 1,
\qquad
C^2\log 8\ge 1,
\end{equation}
Assumption~\ref{assump:sampletail} at $R=0$ gives
$1=\mu(|x_i|\ge0)\le A_{\rm tail}$, and the same argument applies to $\nu$.
Hence $A_{\rm tail}\ge1$, so
$\log(8A_{\rm tail}dN)+\log(1/\delta)\ge\log 8>0$. Set
\begin{equation}\label{samplepf-radius}
R_N:=
C
\left[\log(8A_{\rm tail}dN)+\log(1/\delta)\right]^{1/2}.
\end{equation}
By \eqref{samplepf-C-choice}, we also have $R_N\ge1$.
With probability at least $1-\delta/4$, all coordinates of all samples
$x^{(\ell)}$ and $y^{(\ell)}$, $\ell\in[N]$, are bounded in absolute value by $R_N$. Indeed,
let
\begin{equation}\label{samplepf-event}
E_R:=
\left\{
\max_{\ell\in[N]}\max_{i\in[d]} |x_i^{(\ell)}|\le R_N
\quad\hbox{and}\quad
\max_{\ell\in[N]}\max_{i\in[d]} |y_i^{(\ell)}|\le R_N
\right\}.
\end{equation}
Then Assumption~\ref{assump:sampletail} gives
\begin{align}\label{samplepf-event-prob}
\mathbb{P}(E_R^c)
&\le
\sum_{\ell=1}^N\sum_{i=1}^d
\mathbb{P}\bigl(|x_i^{(\ell)}|>R_N\bigr)
+
\sum_{\ell=1}^N\sum_{i=1}^d
\mathbb{P}\bigl(|y_i^{(\ell)}|>R_N\bigr)  \\
&\le 2dN A_{\rm tail}\exp(-a_{\rm tail}R_N^2). \notag
\end{align}
Since $R_N$ is defined by \eqref{samplepf-radius} and $C$ is chosen so that
$a_{\rm tail}C^2\ge 1$, we have
\begin{equation}\label{samplepf-event-final}
\mathbb{P}(E_R^c)
\le
2dN A_{\rm tail}
\exp\!\left(-\log\!\left(\frac{8A_{\rm tail}dN}{\delta}\right)\right)
= \delta/4.
\end{equation}
Thus $\mathbb{P}(E_R)\ge 1-\delta/4$.

Next truncate each local monomial on its own support. For $f\in\mathcal F$, define
\begin{equation}\label{samplepf-trunc-def}
f_R(z):=
f(z)\,
\mathbf 1_{\{|z_i|\le R_N\ {\rm for\ every}\ i\in S_f\}},
\qquad z\in\R^d.
\end{equation}
If $S_f=\varnothing$, the condition in the indicator is interpreted as true. Thus
$f_R=f$ whenever all coordinates in $S_f$ are bounded by $R_N$, and $f_R=0$ otherwise.
Since $f$ is a monomial of total degree at most $2n$, we have
\begin{equation}\label{samplepf-trunc-bound}
|f_R(z)|\le R_N^{2n},
\qquad z\in\R^d.
\end{equation}
We now derive the concentration estimate for the truncated moments. For
$f\in\mathcal F$, let $U^{(\ell)}=x^{(\ell)}$ if
$f$ belongs to the $\mathcal F_x$ copy of the disjoint union and
$U^{(\ell)}=y^{(\ell)}$ if $f$ belongs to the $\mathcal F_y$ copy.
For fixed $f$, the variables $\xi_\ell^f:=f_R(U^{(\ell)})$ are independent and
satisfy $-R_N^{2n}\le \xi_\ell^f\le R_N^{2n}$ by \eqref{samplepf-trunc-bound}.
Thus Lemma~\ref{lem:hoeffding} implies
\begin{equation}\label{samplepf-hoeffding}
\mathbb{P}\!\left(
\left|
\frac1N\sum_{\ell=1}^N f_R(U^{(\ell)})-\mathbb{E}f_R(U)
\right|\ge t
\right)
\le
2\exp\!\left(-\frac{Nt^2}{2R_N^{4n}}\right).
\end{equation}
Combining \eqref{samplepf-card} with the union bound gives
\begin{equation}\label{samplepf-union}
\mathbb{P}\!\left(
\sup_{f\in\mathcal F}
\left|
\frac1N\sum_{\ell=1}^N f_R(U^{(\ell)})-\mathbb{E}f_R(U)
\right|\ge t
\right)
\le
4M_{\rm mom}\exp\!\left(
-\frac{Nt^2}{2R_N^{4n}}
\right).
\end{equation}
Define
\begin{equation}\label{samplepf-tN}
t_N:=C R_N^{2n}
\sqrt{\frac{\log M_{\rm mom}+\log(1/\delta)}{N}} .
\end{equation}
With $C$ chosen sufficiently large at the start of the proof, substituting $t=t_N$ in
\eqref{samplepf-union} gives
\begin{align}\label{samplepf-union-final}
4M_{\rm mom}\exp\!\left(
-\frac{Nt_N^2}{2R_N^{4n}}
\right)
&=
4M_{\rm mom}\exp\!\left(
-\frac{C^2}{2}\bigl(\log M_{\rm mom}+\log(1/\delta)\bigr)
\right) \notag\\
&\le \frac{3\delta}{4}.
\end{align}
Therefore, with probability at least $1-3\delta/4$,
\begin{equation}\label{samplepf1}
\sup_{f\in\mathcal F}
\left|
\frac1N\sum_{\ell=1}^N f_R\bigl(U^{(\ell)}\bigr)-\mathbb{E}f_R(U)
\right|
\le
t_N.
\end{equation}

It remains to compare $f_R$ with $f$ in expectation. By Lemma~\ref{lem:tailtrunc},
\begin{equation}\label{samplepf-tailbias}
\sup_{f\in\mathcal F}
\mathbb{E}|f(U)-f_R(U)|
\le
C_{\rm tail}R_N^{2n}e^{-c_{\rm tail}R_N^2}.
\end{equation}
The right-hand side of \eqref{samplepf-tailbias} is of smaller order than the
concentration scale. Indeed, by \eqref{samplepf-C-choice} and
$R_N$ as defined in \eqref{samplepf-radius},
\begin{equation}\label{samplepf-tail-exp}
e^{-c_{\rm tail}R_N^2}
=
\exp\!\left(
-c_{\rm tail}C^2
\log\!\left(\frac{8A_{\rm tail}dN}{\delta}\right)
\right)
\le
\exp\!\left(-\log\!\left(\frac{8A_{\rm tail}dN}{\delta}\right)\right).
\end{equation}
Therefore
\begin{equation}\label{samplepf-tail-N}
\exp\!\left(-\log\!\left(\frac{8A_{\rm tail}dN}{\delta}\right)\right)
=
\frac{\delta}{8A_{\rm tail}dN}
\le
N^{-1},
\end{equation}
where the last inequality uses $\delta\le1$, $d\ge1$, and $A_{\rm tail}\ge1$.
In the nontrivial monomial relaxations considered here, $M_{\rm mom}\ge2$; if the basis
contains only constants, all moment errors are zero and the proposition is immediate.
Thus $\log M_{\rm mom}+\log(1/\delta)\ge\log2$.
Consequently, for $N\ge1$,
\begin{equation}\label{samplepf-N-to-log}
N^{-1}
\le
C\sqrt{\frac{\log M_{\rm mom}+\log(1/\delta)}{N}},
\end{equation}
The numerical constant in this inequality is absorbed into the same $C$ chosen at the
start of the proof. Combining \eqref{samplepf-tailbias}--\eqref{samplepf-N-to-log}
gives
\begin{equation}\label{samplepf-tailbias-final}
C_{\rm tail}R_N^{2n}e^{-c_{\rm tail}R_N^2}
\le
C R_N^{2n}\sqrt{\frac{\log M_{\rm mom}+\log(1/\delta)}{N}}
\le
t_N.
\end{equation}
Now take the intersection of the event $E_R$ in \eqref{samplepf-event} and the concentration
event described before \eqref{samplepf1}. By \eqref{samplepf-event-final} and the
probability estimate preceding \eqref{samplepf1}, this intersection has probability at
least $1-\delta$. On $E_R$, for every sample
$U^{(\ell)}$ and every $f\in\mathcal F$, all coordinates in $S_f$ are bounded by $R_N$.
Therefore, by \eqref{samplepf-trunc-def},
\begin{equation}\label{samplepf-empirical-equality}
f_R(U^{(\ell)})=f(U^{(\ell)}),
\qquad
f\in\mathcal F,\ \ell\in[N].
\end{equation}
Consequently,
\begin{equation}\label{samplepf-average-equality}
\frac1N\sum_{\ell=1}^N f(U^{(\ell)})
=
\frac1N\sum_{\ell=1}^N f_R(U^{(\ell)}).
\end{equation}
For each fixed $f$, we then use the triangle inequality:
\begin{align}\label{samplepf-triangle}
\left|
\frac1N\sum_{\ell=1}^N f(U^{(\ell)})-\mathbb{E}f(U)
\right|
&\le
\left|
\frac1N\sum_{\ell=1}^N f_R(U^{(\ell)})-\mathbb{E}f_R(U)
\right|  \\
&\quad+
\left|\mathbb{E}f_R(U)-\mathbb{E}f(U)\right| .
\end{align}
The first term in \eqref{samplepf-triangle} is controlled by \eqref{samplepf1}, after
using \eqref{samplepf-average-equality}; the second term is bounded by
\eqref{samplepf-tailbias-final}, which is obtained from the tail estimate
\eqref{samplepf-tailbias}. Taking the supremum over $f\in\mathcal F$ gives
\begin{equation}\label{samplepf-scalar}
\sup_{f\in\mathcal F}
\left|
\frac1N\sum_{\ell=1}^N f\bigl(U^{(\ell)}\bigr)-\mathbb{E}f(U)
\right|
\lesssim
\left[\log(8A_{\rm tail}dN)+\log(1/\delta)\right]^n
\sqrt{
\frac{\log M_{\rm mom}+\log(1/\delta)}{N}
},
\end{equation}
where we used
$R_N^{2n}=C^{2n}\left[\log(8A_{\rm tail}dN)+\log(1/\delta)\right]^n$
from \eqref{samplepf-radius}, with the resulting constant absorbed by $\lesssim$.

Finally, each entry of
$\widehat{M}_k^x-M_k^x$, $\widehat{M}_{ij}^x-M_{ij}^x$,
$\widehat{M}_k^y-M_k^y$, and $\widehat{M}_{ij}^y-M_{ij}^y$ is exactly the empirical
minus true expectation of one function in $\mathcal F$. Therefore \eqref{samplepf-scalar}
implies the matrix-entry bound \eqref{samplemain}.
\end{proof}
}

\section*{Acknowledgments.}
The authors thank Siyao Yang for assistance with the sampling procedures and Yifan Peng for providing the neural network code used in our numerical experiments. In particular, the first author, Yuehaw Khoo, would like to thank Gero Friesecke for their valuable discussions on high-dimensional optimal transport.

\bibliographystyle{abbrv}
\bibliography{OT}

@article{wainwright2008graphical,
  title={Graphical models, exponential families, and variational inference},
  author={Wainwright, Martin J and Jordan, Michael I and others},
  journal={Foundations and Trends{\textregistered} in Machine Learning},
  volume={1},
  number={1--2},
  pages={1--305},
  year={2008},
  publisher={Now Publishers, Inc.}
}

@article{kullback1951information,
  title={On information and sufficiency},
  author={Kullback, Solomon and Leibler, Richard A},
  journal={The Annals of Mathematical Statistics},
  volume={22},
  number={1},
  pages={79--86},
  year={1951},
  publisher={JSTOR}
}

@article{cuturi2013sinkhorn,
  title={Sinkhorn distances: Lightspeed computation of optimal transport},
  author={Cuturi, Marco},
  journal={Advances in Neural Information Processing Systems},
  volume={26},
  year={2013}
}

@article{peyre2019computational,
  title={Computational optimal transport: With applications to data science},
  author={Peyr{\'e}, Gabriel and Cuturi, Marco and others},
  journal={Foundations and Trends{\textregistered} in Machine Learning},
  volume={11},
  number={5-6},
  pages={355--607},
  year={2019},
  publisher={Now Publishers, Inc.}
}

@article{pelizzola2005cluster,
  title={Cluster variation method in statistical physics and probabilistic graphical models},
  author={Pelizzola, Alessandro},
  journal={Journal of Physics A: Mathematical and General},
  volume={38},
  number={33},
  pages={R309},
  year={2005},
  publisher={IOP Publishing}
}

@article{lasserre2001global,
  title={Global optimization with polynomials and the problem of moments},
  author={Lasserre, Jean B},
  journal={SIAM Journal on optimization},
  volume={11},
  number={3},
  pages={796--817},
  year={2001},
  publisher={SIAM}
}

@inproceedings{chen2025convex,
  title={Convex relaxation for Fokker--Planck equation},
  author={Chen, Yian and Khoo, Yuehaw and Lim, Lek-Heng},
  booktitle={Proceedings A},
  volume={481},
  number={2313},
  pages={20240001},
  year={2025},
  organization={The Royal Society}
}

@article{chen2024multiscale,
  title={Multiscale semidefinite programming approach to positioning problems with pairwise structure},
  author={Chen, Yian and Khoo, Yuehaw and Lindsey, Michael},
  journal={Journal of Scientific Computing},
  volume={101},
  number={2},
  pages={42},
  year={2024},
  publisher={Springer}
}

@article{demko1984decay,
  title={Decay rates for inverses of band matrices},
  author={Demko, Stephen and Moss, William F and Smith, Philip W},
  journal={Mathematics of computation},
  volume={43},
  number={168},
  pages={491--499},
  year={1984}
}

@article{khoo2019convex,
  title={Convex relaxation approaches for strictly correlated density functional theory},
  author={Khoo, Yuehaw and Ying, Lexing},
  journal={SIAM Journal on Scientific Computing},
  volume={41},
  number={4},
  pages={B773--B795},
  year={2019},
  publisher={SIAM}
}

@article{mula2024moment,
  title={Moment-SoS methods for optimal transport problems},
  author={Mula, Olga and Nouy, Anthony},
  journal={Numerische Mathematik},
  volume={156},
  number={4},
  pages={1541--1578},
  year={2024},
  publisher={Springer}
}

@inproceedings{peng2012approximate,
  title={Approximate inference by intersecting semidefinite bound and local polytope},
  author={Peng, Jian and Hazan, Tamir and Srebro, Nathan and Xu, Jinbo},
  booktitle={Artificial Intelligence and Statistics},
  pages={868--876},
  year={2012},
  organization={PMLR}
}

@book{trefethen2019approximation,
  title={Approximation theory and approximation practice, extended edition},
  author={Trefethen, Lloyd N},
  year={2019},
  publisher={SIAM}
}

@article{monge1781memoire,
  title={M{\'e}moire sur la th{\'e}orie des d{\'e}blais et des remblais},
  author={Monge, Gaspard},
  journal={Mem. Math. Phys. Acad. Royale Sci.},
  pages={666--704},
  year={1781}
}

@inproceedings{kantorovich1942translocation,
  title={On the translocation of masses},
  author={Kantorovich, Leonid V},
  booktitle={Dokl. Akad. Nauk. USSR (NS)},
  volume={37},
  pages={199--201},
  year={1942}
}

@book{villani2021topics,
  title={Topics in optimal transportation},
  author={Villani, C{\'e}dric},
  volume={58},
  year={2021},
  publisher={American Mathematical Soc.}
}

@book{villani2008optimal,
  title={Optimal transport: old and new},
  author={Villani, C{\'e}dric and others},
  volume={338},
  year={2008},
  publisher={Springer}
}

@inproceedings{arjovsky2017wasserstein,
  title={Wasserstein generative adversarial networks},
  author={Arjovsky, Martin and Chintala, Soumith and Bottou, L{\'e}on},
  booktitle={International conference on machine learning},
  pages={214--223},
  year={2017},
  organization={PMLR}
}

@inproceedings{genevay2018learning,
  title={Learning generative models with sinkhorn divergences},
  author={Genevay, Aude and Peyr{\'e}, Gabriel and Cuturi, Marco},
  booktitle={International Conference on Artificial Intelligence and Statistics},
  pages={1608--1617},
  year={2018},
  organization={PMLR}
}

@article{balaji2020robust,
  title={Robust optimal transport with applications in generative modeling and domain adaptation},
  author={Balaji, Yogesh and Chellappa, Rama and Feizi, Soheil},
  journal={Advances in Neural Information Processing Systems},
  volume={33},
  pages={12934--12944},
  year={2020}
}

@article{fournier2015rate,
  title={On the rate of convergence in Wasserstein distance of the empirical measure},
  author={Fournier, Nicolas and Guillin, Arnaud},
  journal={Probability theory and related fields},
  volume={162},
  number={3},
  pages={707--738},
  year={2015},
  publisher={Springer}
}

@article{dudley1969speed,
  title={The speed of mean Glivenko-Cantelli convergence},
  author={Dudley, Richard Mansfield},
  journal={The Annals of Mathematical Statistics},
  volume={40},
  number={1},
  pages={40--50},
  year={1969},
  publisher={JSTOR}
}

@article{weed2019sharp,
  title={Sharp asymptotic and finite-sample rates of convergence of empirical measures in Wasserstein distance},
  author={Weed, Jonathan and Bach, Francis},
  journal={Bernoulli},
  volume={25},
  number={4A},
  pages={2620--2648},
  year={2019},
  publisher={JSTOR}
}

@inproceedings{makkuva2020optimal,
  title={Optimal transport mapping via input convex neural networks},
  author={Makkuva, Ashok and Taghvaei, Amirhossein and Oh, Sewoong and Lee, Jason},
  booktitle={International Conference on Machine Learning},
  pages={6672--6681},
  year={2020},
  organization={PMLR}
}

@article{korotin2022neural,
  title={Neural optimal transport},
  author={Korotin, Alexander and Selikhanovych, Daniil and Burnaev, Evgeny},
  journal={arXiv preprint arXiv:2201.12220},
  year={2022}
}

@inproceedings{rabin2011wasserstein,
  title={Wasserstein barycenter and its application to texture mixing},
  author={Rabin, Julien and Peyr{\'e}, Gabriel and Delon, Julie and Bernot, Marc},
  booktitle={International conference on scale space and variational methods in computer vision},
  pages={435--446},
  year={2011},
  organization={Springer}
}

@inproceedings{paty2019subspace,
  title={Subspace robust Wasserstein distances},
  author={Paty, Fran{\c{c}}ois-Pierre and Cuturi, Marco},
  booktitle={International conference on machine learning},
  pages={5072--5081},
  year={2019},
  organization={PMLR}
}

@article{lin2020projection,
  title={Projection robust Wasserstein distance and Riemannian optimization},
  author={Lin, Tianyi and Fan, Chenyou and Ho, Nhat and Cuturi, Marco and Jordan, Michael},
  journal={Advances in neural information processing systems},
  volume={33},
  pages={9383--9397},
  year={2020}
}

@article{tolstikhin2017wasserstein,
  title={Wasserstein auto-encoders},
  author={Tolstikhin, Ilya and Bousquet, Olivier and Gelly, Sylvain and Schoelkopf, Bernhard},
  journal={arXiv preprint arXiv:1711.01558},
  year={2017}
}

@article{bonneel2015sliced,
  title={Sliced and radon wasserstein barycenters of measures},
  author={Bonneel, Nicolas and Rabin, Julien and Peyr{\'e}, Gabriel and Pfister, Hanspeter},
  journal={Journal of Mathematical Imaging and Vision},
  volume={51},
  number={1},
  pages={22--45},
  year={2015},
  publisher={Springer}
}

@inproceedings{vacher2021dimension,
  title={A dimension-free computational upper-bound for smooth optimal transport estimation},
  author={Vacher, Adrien and Muzellec, Boris and Rudi, Alessandro and Bach, Francis and Vialard, Francois-Xavier},
  booktitle={Conference on Learning Theory},
  pages={4143--4173},
  year={2021},
  organization={PMLR}
}

@inproceedings{weed2019estimation,
  title={Estimation of smooth densities in Wasserstein distance},
  author={Weed, Jonathan and Berthet, Quentin},
  booktitle={conference on Learning Theory},
  pages={3118--3119},
  year={2019},
  organization={PMLR}
}

@article{hutter2021minimax,
  title={Minimax estimation of smooth optimal transport maps},
  author={H{\"u}tter, Jan-Christian and Rigollet, Philippe},
  journal={The Annals of Statistics},
  volume={49},
  number={2},
  year={2021}
}

@article{khoo2020semidefinite,
  title={Semidefinite relaxation of multimarginal optimal transport for strictly correlated electrons in second quantization},
  author={Khoo, Yuehaw and Lin, Lin and Lindsey, Michael and Ying, Lexing},
  journal={SIAM Journal on Scientific Computing},
  volume={42},
  number={6},
  pages={B1462--B1489},
  year={2020},
  publisher={SIAM}
}

@article{an1988note,
  title={A note on the cluster variation method},
  author={An, Guozhong},
  journal={Journal of Statistical Physics},
  volume={52},
  number={3},
  pages={727--734},
  year={1988},
  publisher={Springer}
}

@article{aps2019mosek,
title={Mosek optimization toolbox for matlab},
author={Aps, Mosek},
journal={User's Guide and Reference Manual Version},
volume={4},
number={1},
pages={116},
year={2019}
}

@book{lasserre2009moments,
  title={Moments, positive polynomials and their applications},
  author={Lasserre, Jean Bernard},
  volume={1},
  year={2009},
  publisher={World Scientific}
}

@article{brenier1991polar,
  title={Polar factorization and monotone rearrangement of vector-valued functions},
  author={Brenier, Yann},
  journal={Communications on pure and applied mathematics},
  volume={44},
  number={4},
  pages={375--417},
  year={1991},
  publisher={Wiley Online Library}
}

@article{peng2024tensor,
  title={Tensor Density Estimator by Convolution-Deconvolution},
  author={Peng, Yifan and Yang, Siyao and Khoo, Yuehaw and Wang, Daren},
  journal={arXiv preprint arXiv:2412.18964},
  year={2024}
}

@article{hou2025low,
  title={A low-rank augmented Lagrangian method for doubly nonnegative relaxations of mixed-binary quadratic programs},
  author={Hou, Di and Tang, Tianyun and Toh, Kim-Chuan},
  journal={Operations Research},
  year={2025},
  publisher={INFORMS}
}

@article{sun2020sdpnal+,
  title={SDPNAL+: A Matlab software for semidefinite programming with bound constraints (version 1.0)},
  author={Sun, Defeng and Toh, Kim-Chuan and Yuan, Yancheng and Zhao, Xin-Yuan},
  journal={Optimization Methods and Software},
  volume={35},
  number={1},
  pages={87--115},
  year={2020},
  publisher={Taylor \& Francis}
}

@article{hou2025rinnal+,
  title={RiNNAL+: a Riemannian ALM Solver for SDP-RLT Relaxations of Mixed-Binary Quadratic Programs},
  author={Hou, Di and Tang, Tianyun and Toh, Kim-Chuan},
  journal={arXiv preprint arXiv:2507.13776},
  year={2025}
}

@article{tang2024exploring,
  title={Exploring chordal sparsity in semidefinite programming with sparse plus low-rank data matrices},
  author={Tang, Tianyun and Toh, Kim-Chuan},
  journal={arXiv preprint arXiv:2410.23849},
  year={2024}
}

@article{gu2022faster,
  title={A faster small treewidth {SDP} solver},
  author={Gu, Yuzhou and Song, Zhao},
  journal={arXiv preprint arXiv:2211.06033},
  year={2022}
}

@article{cloninger2025linearized,
  title={Linearized Wasserstein dimensionality reduction with approximation guarantees},
  author={Cloninger, Alexander and Hamm, Keaton and Khurana, Varun and Moosm{\"u}ller, Caroline},
  journal={Applied and Computational Harmonic Analysis},
  volume={74},
  pages={101718},
  year={2025},
  publisher={Elsevier}
}

@article{calder2022improved,
  title={Improved spectral convergence rates for graph Laplacians on $\varepsilon$-graphs and k-NN graphs},
  author={Calder, Jeff and Trillos, Nicolas Garcia},
  journal={Applied and Computational Harmonic Analysis},
  volume={60},
  pages={123--175},
  year={2022},
  publisher={Elsevier}
}

@article{lasserre2008semidefinite,
  title={A semidefinite programming approach to the generalized problem of moments},
  author={Lasserre, Jean B},
  journal={Mathematical Programming},
  volume={112},
  number={1},
  pages={65--92},
  year={2008},
  publisher={Springer}
}

@inproceedings{scarvelis2023riemannian,
  title={Riemannian metric learning via optimal transport},
  author={Scarvelis, Christopher and Solomon, Justin},
  booktitle={International Conference on Learning Representations},
  year={2023},
  organization={OpenReview}
}

@article{mokrov2021large,
  title={Large-scale wasserstein gradient flows},
  author={Mokrov, Petr and Korotin, Alexander and Li, Lingxiao and Genevay, Aude and Solomon, Justin M and Burnaev, Evgeny},
  journal={Advances in Neural Information Processing Systems},
  volume={34},
  pages={15243--15256},
  year={2021}
}

@article{cheng2024convergence,
  title={Convergence of flow-based generative models via proximal gradient descent in wasserstein space},
  author={Cheng, Xiuyuan and Lu, Jianfeng and Tan, Yixin and Xie, Yao},
  journal={IEEE Transactions on Information Theory},
  volume={70},
  number={11},
  pages={8087--8106},
  year={2024},
  publisher={IEEE}
}

@book{friesecke2024optimal,
  title={Optimal Transport: A Comprehensive Introduction to Modeling, Analysis, Simulation, Applications},
  author={Friesecke, Gero},
  year={2024},
  publisher={SIAM}
}

@article{dolgov2020approximation,
  title={Approximation and sampling of multivariate probability distributions in the tensor train decomposition},
  author={Dolgov, Sergey and Anaya-Izquierdo, Karim and Fox, Colin and Scheichl, Robert},
  journal={Statistics and Computing},
  volume={30},
  number={3},
  pages={603--625},
  year={2020},
  publisher={Springer}
}

@article{agler1988positive,
  title={Positive semidefinite matrices with a given sparsity pattern},
  author={Agler, Jim and Helton, William and McCullough, Scott and Rodman, Leiba},
  journal={Linear algebra and its applications},
  volume={107},
  pages={101--149},
  year={1988},
  publisher={Elsevier}
}

@article{grone1984positive,
  title={Positive definite completions of partial {Hermitian} matrices},
  author={Grone, Robert and Johnson, Charles R and S{\'a}, Eduardo M and Wolkowicz, Henry},
  journal={Linear algebra and its applications},
  volume={58},
  pages={109--124},
  year={1984},
  publisher={Elsevier}
}

@article{kim2011exploiting,
  title={Exploiting sparsity in linear and nonlinear matrix inequalities via positive semidefinite matrix completion},
  author={Kim, Sunyoung and Kojima, Masakazu and Mevissen, Martin and Yamashita, Makoto},
  journal={Mathematical programming},
  volume={129},
  number={1},
  pages={33--68},
  year={2011},
  publisher={Springer}
}

@article{papamakarios2017masked,
  title={Masked autoregressive flow for density estimation},
  author={Papamakarios, George and Pavlakou, Theo and Murray, Iain},
  journal={Advances in neural information processing systems},
  volume={30},
  year={2017}
}

@book{cela2013quadratic,
  title={The quadratic assignment problem: theory and algorithms},
  author={Cela, Eranda},
  volume={1},
  year={2013},
  publisher={Springer Science \& Business Media}
}

@article{hoeffding1963probability,
  title={Probability inequalities for sums of bounded random variables},
  author={Hoeffding, Wassily},
  journal={Journal of the American Statistical Association},
  volume={58},
  number={301},
  pages={13--30},
  year={1963}
}

@article{benamou2000computational,
  title={A computational fluid mechanics solution to the Monge-Kantorovich mass transfer problem},
  author={Benamou, Jean-David and Brenier, Yann},
  journal={Numerische Mathematik},
  volume={84},
  number={3},
  pages={375--393},
  year={2000},
  publisher={Springer-Verlag Berlin/Heidelberg}
}

@article{mackey2016multivariate,
  title={Multivariate Stein factors for a class of strongly log-concave distributions},
  author={Mackey, Lester and Gorham, Jackson},
  year={2016}
}

@book{chewi2025statistical,
  title={Statistical optimal transport},
  author={Chewi, Sinho and Niles-Weed, Jonathan and Rigollet, Philippe},
  year={2025},
  publisher={Springer}
}

@book{wainwright2019high,
  title={High-Dimensional Statistics: A Non-Asymptotic Viewpoint},
  author={Wainwright, Martin J.},
  volume={48},
  year={2019},
  publisher={Cambridge University Press}
}

@article{laurent2000adaptive,
  title={Adaptive estimation of a quadratic functional by model selection},
  author={Laurent, Beatrice and Massart, Pascal},
  journal={The Annals of Statistics},
  volume={28},
  number={5},
  pages={1302--1338},
  year={2000},
  publisher={JSTOR}
}

@article{ravikumar2011high,
  title={High-dimensional covariance estimation by minimizing {$\ell_1$}-penalized log-determinant divergence},
  author={Ravikumar, Pradeep and Wainwright, Martin J. and Raskutti, Garvesh and Yu, Bin},
  journal={Electronic Journal of Statistics},
  volume={5},
  pages={935--980},
  year={2011},
  doi={10.1214/11-EJS631}
}

@article{peyre2018comparison,
  title={Comparison between {W$_2$} distance and \.{H}-1 norm, and Localization of Wasserstein distance},
  author={Peyre, R{\'e}mi},
  journal={ESAIM: Control, Optimisation and Calculus of Variations},
  volume={24},
  number={4},
  pages={1489--1501},
  year={2018}
}

\end{document}